\documentclass[12pt,reqno,twoside]{amsart}
\usepackage[utf8]{inputenc}
\usepackage[T1]{fontenc}
\usepackage{a4wide}
\usepackage{amsmath}
\usepackage{amscd}
\usepackage{amssymb,latexsym}
\usepackage{adjustbox}
\usepackage{bm}
\usepackage{booktabs}
\usepackage{relsize}
\usepackage{extarrows}
\usepackage{graphicx}
\usepackage{mathrsfs}
\usepackage[stable]{footmisc}
\usepackage{xcolor}
\usepackage[colorlinks,linkcolor=teal,citecolor=teal]{hyperref}

\begin{document}
	\numberwithin{equation}{section}
	\newtheorem{thm}{Theorem}[section]
	\newtheorem{prop}[thm]{Proposition}
	\newtheorem{cor}[thm]{Corollary}
	\newtheorem{lemma}[thm]{Lemma}
	\newtheorem{definition}{Definition}
	\numberwithin{definition}{section} 
	\newtheorem{remark}{Remark}
	\newtheorem{question}{Question}
	\newtheorem{example}{Example}
	\newcommand{\MV}{\mathcal{V}}
	\newcommand{\rd}{{\rm d}}
	\newcommand{\dV}{{\rm d}_{\MV}}
	\newcommand{\dmv}{\rd_{\MV,\vartheta}}
	\newcommand{\dfv}{\rd_{\mathcal{F},[\vartheta]}}
	\newcommand{\dbf}{\bar{\partial}_{\mathcal{F},[\vartheta]}}
	\newcommand{\mfp}{\mathfrak{p}}
	\newcommand{\lmfp}{\scalebox{0.8}[1]{$\mathfrak{p}$}}

	\title[Levi flat structures via structure sheaves]{Levi flat structures via structure sheaves: differential complexes, convexity, and global solvability}
	\author{Qingchun Ji}
	\address{School of Mathematical Sciences, Fudan University, Shanghai 200433, China}
	\email{qingchunji@fudan.edu.cn}
	\author{Jun Yao}
	\address{School of Mathematical Sciences, University of Electronic Science and Technology of China, Chengdu 611731, China}
	\email{junyao@uestc.edu.cn}
	\thanks{This work was partially supported by National Natural Science Foundation of China, No. 12431005, No. 12571086.}
	\subjclass[2020]{Primary 35A01, 58J10; Secondary 32F10, 53C15}
	\keywords{Levi flat structures, the Treves complex, convexity, global solvability, logarithmic forms}
	\date{}

\begin{abstract}
	This paper investigates Levi flat structures from the perspective of structure sheaves. We employ formal integrability to construct a class of differential complexes, thereby providing a resolution for the structure sheaf and a global realization of the Treves complex. Drawing inspiration from Morse theory and Grauert's convexity, we introduce notions of convexity and positivity that fully exploits Levi flatness, which ensures the global exactness of the differential complex and demonstrates Sobolev regularity in the compact case. As applications, we establish the global solvability of the Treves complex for Levi flat structures, together with results on singular cohomology and the extension problem for canonical forms in the elliptic case.
\end{abstract}
	\maketitle
\setcounter{tocdepth}{3}
\tableofcontents

\section{Introduction}
In \cite{Tf81}, F. Treves introduced a differential complex associated with a formally integrable structure, which naturally generalizes the de Rham, Dolbeault, and tangential Cauchy-Riemann complexes. The local solvability of the Treves complex has since attracted considerable attention, yet a complete characterization remains limited to a few cases: (i) $\mathcal{V}$ is elliptic (\cite{T2} and \cite{BCH08}), (ii) $\mathcal{V}$ has a nondegenerate Levi form (\cite{AH72} and \cite{Tf82}), (iii) $\mathcal{V}$ has corank one (\cite{CH}), or (iv) $\MV$ defines a \emph{complex Frobenius structure}, i.e., its Levi form vanishes (\cite{JYY22}). In the case of $\textrm{rank}_\mathbb{C}\MV=1$, the local solvability of the Treves complex is closely related to the local solvability of a single vector field. We review two criteria for local solvability. Let $X$ be a smooth complex vector field, according to a special case of Hörmander's theorem, the local solvability of $X$ has the following necessary condition
\begin{align*}
	\mathcal{P}_{_X}^{-1}(0)\subseteq\mathcal{P}_{_{\left[X,\bar{X}\right]}}^{-1}(0),
\end{align*}
where $\mathcal{P}_\cdot$ denotes the principal symbol of a differential operator. Clearly, any smooth section of a complex Frobenius structure of rank $1$ satisfies Hörmander's condition. On the other hand, L. Nirenberg and F. Treves proposed a necessary and sufficient condition for the local solvability of $X$ (see \cite{NT63} and \cite{Hlv85}):
\begin{center}
	$\textrm{Im}\mathcal{P}_{_X}$ does not change sign along every null bicharacteristic curve of $\textrm{Re}\mathcal{P}_{_X}$.
\end{center}
Restrict to $2$-dimensional case and write $X$ in the canonical form 
\begin{align*}
	X=\frac{\partial}{\partial x_1}+\sqrt{-1}a(x)\frac{\partial}{\partial x_2}, 
\end{align*}
where $a(x)\in C^\infty(\mathbb{R}^2)$ is a real-valued function. The Nirenberg-Treves condition amounts to
\begin{align*}
	x_1\mapsto a(x)\ \text{does not change sign}.
\end{align*}
Obviously, the smooth vector field $X=\frac{\partial}{\partial x_1}+\sqrt{-1}a(x)\frac{\partial}{\partial x_2}$ with
\begin{align*}
	a(x)=b(x_2)e^{c(x)}+d(x_2),\ b(x_2)d(x_2)\geq0,
\end{align*}
satisfies the Nirenberg-Treves condition, and generates a complex Frobenius structure  $\mathcal{V}$. These observations suggest that local solvability may be inherent in complex Frobenius structures. 

A complex Frobenius structure $\MV$ is said to be a \emph{Levi flat structure} if $\MV+\overline{\MV}$ has constant rank (\cite{T2}). This paper is dedicated to the study of Levi flat structures from the perspective of structure sheaves. We formulate a geometric condition for the global solvability of the Morse-Novikov-Treves complex (see (\ref{new complex})) associated with a Levi flat structure, which is a natural generalization of the Treves complex. This condition is determined by the Levi flat structure itself, and local solvability follows immediately since the condition holds locally.

We begin by constructing a class of differential complexes, which provides a global realization of the Morse-Novikov-Treves complex. Let $\MV$ be a formally integrable structure with its dual bundle $\MV^*$ over an $(m+n)$-dimensional smooth manifold $M$, where
\begin{align*}
	n=\text{rank}_{\mathbb{C}}\MV,\ m=\text{corank}_{\mathbb{C}}\MV.
\end{align*}
For $p,q\geq 0$, set
\begin{align}\label{e12}
	\Lambda_\MV^{p,q}=\Lambda^{p} N^*\MV\wedge\Lambda^{q}\mathbb{C}T^*M\subseteq\Lambda^{p+q}\mathbb{C}T^*M,
\end{align}
then
\begin{align*}
	\Lambda^{\ell}\mathbb{C}T^*M=\Lambda_\MV^{0,\ell}\supseteq\Lambda_\MV^{1,\ell-1}\supseteq\cdots\supseteq\Lambda_\MV^{\ell,0}=\Lambda^\ell N^*\MV
\end{align*}
is a filtration of $\Lambda^\ell\mathbb{C}T^*M$, where $N^*\MV\subseteq\mathbb{C}T^*M$ is the conormal bundle of $\MV$. In particular, the following line bundle
\begin{align*}
	K_{\MV}:=\Lambda_\MV^{m,0}=\det N^*\MV
\end{align*}
is called the \emph{canonical bundle} of the formally integrable structure $\MV$.

In this paper, we assume that $\MV$ is a formally integrable structure whose canonical bundle $K_{\MV}$ is basic (Definition \ref{blb.}). We now state our results and describe the organization of the paper. Given a basic vector bundle $E$ of rank $r$ over $(M,\MV)$ and for $0\leq q\leq n$, define
\begin{align}\label{bdl.}
	\Lambda_\MV^{m,q}(E)=\Lambda_\MV^{m,q}\otimes E.
\end{align}
Let $\vartheta$ be a smooth $1$-form on $M$ satisfying 
\begin{align}\label{mnf}
	\rd\vartheta\equiv0\ {\rm mod}\ C^\infty(M,N^*\MV).
\end{align}
We will construct a differential complex over $(M,\MV)$ in Section \ref{constr.}:
\begin{align}\label{complex1}
	\mathcal{L}_\MV^{m,0}(E)\stackrel{\dmv}{\longrightarrow}\mathcal{L}_\MV^{m,1}(E)
	\stackrel{\dmv}{\longrightarrow}\mathcal{L}_\MV^{m,2}(E)\stackrel{\dmv }{\longrightarrow}\cdots\stackrel{\dmv }{\longrightarrow}\mathcal{L}_\MV^{m,n}(E)\longrightarrow0,
\end{align}
where $\mathcal{L}_\MV^{m,*}(E)$ is a sheaf defined by (\ref{subs}).




The well-known complex Frobenius theorem (\cite{Nl57}) asserts that every Levi flat structure is locally integrable. 
Since the canonical bundle of any locally integrable structure is basic (see (iii) in Example \ref{exa}), it follows that each Levi flat structure possesses a basic canonical bundle. Our global exactness theorem for the complex (\ref{complex1}) associated with a Levi flat structure is stated as follows.
\begin{thm}\label{t1}
	Suppose that $M$ admits a $q$-convex exhaustion function $\varphi\in C^\infty(M)$ with respect to a Levi flat structure $\MV$, $\vartheta$ is a smooth $1$-form on $M$ satisfying $(\ref{mnf})$, and $E$ is a basic vector bundle over $(M,\MV)$. Then for any $f\in L^2_{loc}(M,\Lambda_\MV^{m,q'}(E))$ with $ \dmv f=0 $, there exists some $ u\in L^2_{loc}(M,\Lambda_\MV^{m,q'-1}(E)) $ such that $ \dmv u=f $ for $q'\geq q$. 
\end{thm}

The notion of $q$-convexity for $\varphi$ (Definition \ref{qpqc}) in Theorem \ref{t1} is weaker than the Grauert-type convexity when $\MV$ is a complex structure, and encodes information about the Morse indices of $\varphi$ if $\MV$ is essentially real, i.e., $\MV=\overline{\MV}$. We will give some examples of smooth $q$-convex exhaustion functions with respect to elliptic structures and Levi flat CR structures (Example \ref{qc.}). 
It is noteworthy that the convexity condition requires the positivity of the quadratic form (Definition \ref{quadratic.}) only on a subset $\mathcal{K}_\varphi\subseteq M$. However, the Levi flatness of $\MV$ enables us to construct a vector field that compensates for the loss of positivity outside $\mathcal{K}_\varphi$ (see Section \ref{existe.} for details).

Theorem \ref{t1} yields a local existence result (Corollary \ref{local existence}), since any point in $M$ possesses an open neighborhood with a smooth $1$-convex exhaustion function with respect to a Levi flat structure. Thus, the complex (\ref{complex1}) provides a fine resolution of the structure sheaf $\mathcal{S}_\MV (E)$ (Corollary \ref{ros}).

On a compact manifold, any real-valued smooth function $\phi$ must attain its maximum, and the Hessian of $\phi$ is negative definite at maximum points, which implies that there is no $q$-convex function. For compact manifolds, we employ the positivity of a basic line bundle (Definition \ref{qpqc}) to establish a theorem on global existence and Sobolev regularity.
\begin{thm}\label{ct}
	Let $\MV$ be a Levi flat structure on a compact manifold $M$, $E$ a basic vector bundle over $(M,\MV)$, and $\vartheta$ a smooth $1$-form on $M$ fulfilling $(\ref{mnf})$. Assume that there exists a $q$-positive basic line bundle $(L,h_{_L})$ with respect to $\MV$. For every $s\in\mathbb{Z}_{\geq0}$ and positive constant $\delta$, there is a positive integer $\tau_{s,\delta}$ such that for any $f\in W^{s}\big(M,\Lambda_\MV^{m,q'}(L^{\tau}\otimes E)\big)$ with $ \dmv f=0 $, there exists some $u\in W^{s}\big(M,\Lambda_\MV^{m,q'-1}(L^{\tau}\otimes E)\big)$ satisfying $ \dmv u=f $ and $\|u\|_s\leq \delta\|f\|_s$ for $q'\geq q$ and $\tau\geq \tau_{s,\delta}$, where $W^{s}\big(M,\Lambda_\MV^{m,q'}(L^{\tau}\otimes E)\big)$ is the completion of $C^\infty\big(M,\Lambda_\MV^{m,q'}(L^{\tau}\otimes E)\big)$ under the Sobolev norm of order $s$.
\end{thm}

\begin{remark}
	It should be noted that the finite-order regularity in Theorem \ref{ct} cannot be extended to infinite-order due to the example in \cite{Am14}. However, in the case of the tube structures, a very interesting characterization of the almost global hypoellipticity of the Treves complex is established in \cite{AFJR24}.
\end{remark}
Theorem \ref{t1} and Theorem \ref{ct}, together with Corollary \ref{ros} imply the vanishing of sheaf cohomologies by letting $\vartheta\equiv0$.
\begin{cor}\label{local existence1}
	Let $\MV$ be a Levi flat structure on a smooth manifold $M$, and $E$ a basic vector bundle over $(M,\MV)$. 
	\begin{enumerate}
		\item[$(i)$] If there exists a smooth $q$-convex exhaustion function with respect to $\MV$ on $M$, then $H^{q'}(M,\mathcal{S}_\MV (E))=0$ for $q'\geq q$.
		\item[$(ii)$] If $M$ is compact and $(L,h_{_L})$ is a $q$-positive basic line bundle with respect to $\MV$, then there exists an integer $\tau_0>0$ such that $H^{q'}(M,\mathcal{S}_\MV (L^{\tau}\otimes E))=0$ for $q'\geq q$ and $\tau\geq \tau_0$.
	\end{enumerate}
\end{cor}
\begin{remark}
	$(i)$ If $\MV$ defines an elliptic structure $($i.e., $N^*\MV\cap\overline{N^*\MV}=0)$, then $\mathcal{S}_\MV(E)=\mathcal{O}_\MV(E)$ $($Definition \ref{blb.}$)$ by the standard elliptic regularity analysis.
	
	$(ii)$ If $\MV$ is a complex structure, then Corollary \ref{local existence1} recovers some classical vanishing theorems in several complex variables $($see \cite{Djp12C}$)$.
	
	$(iii)$ For $\MV=\mathbb{C}TM$, let $E$ be a trivial line bundle over $(M,\MV)$. Corollary \ref{local existence1} $(i)$ states that $H^{q'}(M,\mathbb{C})=0$ for $q'\geq q$, provided that $M$ admits a smooth $q$-convex exhaustion function with respect to $\MV$. In the special case where $\varphi$ is also a Morse function, since
	\begin{center}
		$\mathcal{K}_{\varphi}=$ the critical locus of $\varphi$, $Q_{\varphi,{\mfp}}={\rm Hess}_\varphi(\lmfp)$ $($Definition \ref{quadratic.}$)$,
	\end{center}
	the $q$-convexity of $\varphi$ with respect to $\MV$ $($Definition \ref{qpqc}$)$ is equivalent to requiring its indices to be at most $q-1$. The fundamental theorem of Morse theory $($\cite{Mj63}$)$ yields that $M$ is homotopy equivalent to a $CW$-complex with exactly one $q''$-cell for each critical point of $\varphi$ of index $q''$ $(\leq q-1)$, in particular, $H^{q'}(M,\mathbb{C})=0$ for $q'\geq q$.
\end{remark}

Recently, significant progress has been made on the global solvability of the Treves complexes associated with tube structures on compact manifolds (see \cite{HZ17}, \cite{HZ19}, \cite{BdZ21}, \cite{ADd23}, \cite{AFJR24} and references therein). In Section \ref{global.}, we show that the complex (\ref{complex1}) gives a global realization of the Morse-Novikov-Treves complex associated with $\MV$ if $E$ is chosen to be the dual bundle of $K_\MV$ (Proposition \ref{gr1}).
Moreover, our work demonstrates the global solvability of the Morse-Novikov-Treves complexes associated with Levi flat structures on non-compact manifolds, which follows directly from Proposition \ref{gr1} and Theorem \ref{t1}.
\begin{cor}\label{treves'}
	For any Levi flat structure $\MV$, let $\vartheta$ be a global smooth $1$-form satisfying $(\ref{mnf})$. If $M$ has a smooth $q$-convex exhaustion function with respect to $\MV$, then the Morse-Novikov-Treves complex $($see $(\ref{new complex}))$ is globally $L_{loc}^2$-solvable in degree $q'\geq q$, i.e., for every $f\in L_{loc}^2(M,\Lambda^{q'}\mathbb{C}T^*M/\Lambda_\MV^{1,q'-1})$ with $\rd_\vartheta'f=0$, there is a $u\in L_{loc}^2(M,\Lambda^{q'-1}\mathbb{C}T^*M/\Lambda_\MV^{1,q'-2})$ such that $\rd_\vartheta'u=f$. In particular, this global $L_{loc}^2$-solvability can be strengthened to global smooth solvability when $\MV$ is elliptic.
\end{cor}

For any Levi flat structure $\MV$, the regularity result in \cite{T2} yields a resolution of $\mathcal{O}_\MV(E)$, where $E$ is a basic vector bundle (Corollary \ref{sros}). We also establish vanishing results for the leafwise $L_{loc}^2$-cohomology (Corollary \ref{vlmnc1} and Corollary \ref{vlmnc2}), the latter of which provides a partial affirmative answer to an open question posed by A. El Kacimi Alaoui (Question 2.10.4 in \cite{EKAa14}) under a suitable geometric assumption; see Section \ref{iso} for details.

In Section \ref{logari.}, we introduce a notion of logarithmic forms along a basic hypersurface $D$ (Definition \ref{basichyper}) of $M$ for any elliptic structure, which, combined with the global solvability of the complex (\ref{complex1}), allows us to compute the singular cohomology of $M$ and $M\setminus D$, and to obtain an extension result. 
\begin{cor}\label{sc}
	Let $\MV$ be an elliptic structure over a smooth manifold $M$, $p\geq 0$. If $M$ admits a smooth $1$-convex exhaustion function with respect to $\MV$, then we have
	\begin{enumerate}
		\item[$(i)$] $H^p(M,\mathbb{C}) =\frac{\mathrm{Ker}\left(\Gamma(M,\Omega_\MV^p)\overset{\rd}{\longrightarrow}\Gamma(M,\Omega_\MV^{p+1})\right)}{\mathrm{Im}\left(\Gamma(M,\Omega_\MV^{p-1})\overset{\rd}{\longrightarrow}\Gamma(M,\Omega_\MV^{p})\right)};$
		
		\item[$(ii)$] for any normal crossing basic hypersurface $D\subseteq M$,
		\begin{align*}
			H^p(M\setminus D,\mathbb{C}) =\frac{\mathrm{Ker}\left(\Gamma\big(M,\Omega_\MV^p(\log D)\big)\overset{\rd}{\longrightarrow}\Gamma\big(M,\Omega_\MV^{p+1}(\log D)\big)\right)}{\mathrm{Im}\left(\Gamma\big(M,\Omega_\MV^{p-1}(\log D)\big)\overset{\rd}{\longrightarrow}\Gamma\big(M,\Omega_\MV^{p}(\log D)\big)\right)},
		\end{align*}
		in particular, for $p>{\rm rank}_{\mathbb{C}}N^*\MV$
		\begin{align*}
			H^p(M,\mathbb{C})=H^p(M\setminus D,\mathbb{C})=0;
		\end{align*}
		\item[$(iii)$] if $D$ is a smooth basic hypersurface with the induced elliptic structure $\MV_D$, then the restriction homomorphism ${R}$ $($induced by $(\ref{sfm})$$)$
		\begin{align*}
			{R}:\Gamma(M,\Omega_\MV^m\otimes[D])\rightarrow \Gamma(D,\Omega_{\MV_D}^{m-1})\ \text{is surjective}.
		\end{align*}
	\end{enumerate}
\end{cor}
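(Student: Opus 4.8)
All three assertions follow from one mechanism: for an elliptic structure the holomorphic-type complex $\Omega_\MV^\bullet$ resolves the constant sheaf, each of its terms admits a fine resolution via the complex $(\ref{complex1})$, and the $1$-convexity hypothesis forces enough global vanishing to collapse the resulting spectral sequence onto the complex of global sections. For $(i)$ I would first establish the \emph{elliptic holomorphic Poincaré lemma}: the complex of sheaves $(\Omega_\MV^\bullet,\rd)$ is a resolution of $\mathbb{C}$. Since $\MV$ is elliptic, the complex Frobenius theorem presents it locally as $\mathbb{R}^{n-m}_t\times\mathbb{C}^{m}_z$ with $N^*\MV$ spanned by $\rd z_1,\dots,\rd z_m$, so $\Omega_\MV^\bullet$ is locally the classical holomorphic de Rham complex in the $z$-variables with coefficients constant in $t$, whose exactness in positive degree and kernel $\mathbb{C}$ in degree $0$ are the usual holomorphic Poincaré lemma. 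Hence $H^p(M,\mathbb{C})=\mathbb{H}^p(M,\Omega_\MV^\bullet)$. Next, Corollary \ref{ros} supplies, for each $p$, a fine resolution of $\Omega_\MV^p$ by the complex $(\ref{complex1})$ with $\vartheta\equiv0$ and a basic coefficient bundle chosen so that its sheaf of holomorphic sections is $\Omega_\MV^p$; the $1$-convexity hypothesis together with Corollary \ref{local existence1}$(i)$ (applied with $q=1$) then gives $H^q(M,\Omega_\MV^p)=0$ for every $q\geq1$. Feeding these into the two spectral sequences of $\mathbb{H}^\bullet(M,\Omega_\MV^\bullet)$ — the second degenerating because $\Omega_\MV^\bullet$ resolves $\mathbb{C}$, the first degenerating at $E_2$ because the higher cohomologies of the $\Omega_\MV^p$ vanish — identifies $\mathbb{H}^p(M,\Omega_\MV^\bullet)$ with $H^p\big(\Gamma(M,\Omega_\MV^\bullet),\rd\big)$, which is the asserted formula.

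For $(ii)$ I would repeat the scheme with $\Omega_\MV^\bullet$ replaced by the logarithmic complex $\Omega_\MV^\bullet(\log D)$ of Section \ref{logari.} and $\mathbb{C}$ replaced by $Rj_*\mathbb{C}$, where $j\colon M\setminus D\hookrightarrow M$. The new local ingredient is a \emph{logarithmic Poincaré lemma}: on the standard normal-crossing model the inclusion $\Omega_\MV^\bullet(\log D)\hookrightarrow j_*\big(\Omega_\MV^\bullet|_{M\setminus D}\big)$ is a quasi-isomorphism, so $\Omega_\MV^\bullet(\log D)$ resolves $Rj_*\mathbb{C}$ and $\mathbb{H}^p\big(M,\Omega_\MV^\bullet(\log D)\big)=H^p(M\setminus D,\mathbb{C})$. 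Since $D$ is basic, the logarithmic forms are holomorphic sections of a basic vector bundle, so Corollary \ref{ros} and Corollary \ref{local existence1}$(i)$ again give a fine resolution and the vanishing $H^q\big(M,\Omega_\MV^p(\log D)\big)=0$ for $q\geq1$, and the same spectral-sequence collapse yields the stated formula. The final vanishing is immediate from the ranks: both $\Omega_\MV^p$ and $\Omega_\MV^p(\log D)$ sit inside $\Lambda^pN^*\MV$, which is $0$ once $p>\operatorname{rank}_{\mathbb{C}}N^*\MV$, so the computing complexes vanish in those degrees and $H^p(M,\mathbb{C})=H^p(M\setminus D,\mathbb{C})=0$.

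For $(iii)$ I would use the Poincaré residue sequence attached to the smooth basic hypersurface $D$. Adjunction for the induced elliptic structure identifies $\Omega_{\MV_D}^{m-1}=K_{\MV_D}$ with $(\Omega_\MV^m\otimes[D])|_D$, and the restriction/residue map of $(\ref{sfm})$ fits into a short exact sequence of sheaves $0\to\Omega_\MV^m\to\Omega_\MV^m\otimes[D]\xrightarrow{R}\Omega_{\MV_D}^{m-1}\to0$. As $[D]$ is basic, $\Omega_\MV^m\otimes[D]=\mathcal{O}_\MV(K_\MV\otimes[D])$ is the sheaf of holomorphic sections of a basic line bundle, so the long exact cohomology sequence contains
\begin{align*}
	\Gamma(M,\Omega_\MV^m\otimes[D])\xrightarrow{\,R\,}\Gamma(D,\Omega_{\MV_D}^{m-1})\longrightarrow H^1(M,\Omega_\MV^m).
\end{align*}
By Corollary \ref{local existence1}$(i)$ with $q=1$ one has $H^1(M,\Omega_\MV^m)=H^1\big(M,\mathcal{O}_\MV(K_\MV)\big)=0$, whence $R$ is surjective.

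The main obstacle is local rather than global. Once the elliptic holomorphic Poincaré lemma and — more delicately — the logarithmic Poincaré lemma along a normal-crossing basic divisor are in hand, and once the logarithmic sheaves are recognised as holomorphic sections of basic bundles so that Corollary \ref{local existence1} applies, the remainder is the formal degeneration of the hypercohomology spectral sequences. I expect the logarithmic lemma to demand the most care, since it must blend the normal-crossing combinatorics of Deligne's theory with the real directions $\mathbb{R}^{n-m}$ of the elliptic local model and be checked against the basic/holomorphic splitting.
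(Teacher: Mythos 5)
Your proposal is correct and follows essentially the same route as the paper: the elliptic holomorphic Poincaré lemma (Lemma \ref{intformula}(i)) plus the vanishing $H^q(M,\Omega_\MV^p)=0$ from Corollary \ref{local existence1} for (i); the identification of $\Omega_\MV^p(\log D)$ as a basic vector bundle together with the quasi-isomorphism $\Omega_\MV^*(\log D)\to i_*\mathcal{A}^*_{M\setminus D}$ (proved in the paper by K\"unneth on the model $\Delta_*^a\times\Delta^{m-a}\times(-1,1)^{n-m}$ and an induction on the number of branches, Lemma \ref{quai3}) for (ii); and the residue short exact sequence with $H^1(M,\Omega_\MV^m)=0$ for (iii), where the paper phrases the sequence with $\Omega_\MV^m(\log D)$ and then passes to $\Omega_\MV^m\otimes[D]$ via the isomorphism $s_{_D}\otimes$ rather than writing the twisted sequence directly as you do. The two local Poincaré lemmas you flag as the main work are indeed exactly what the paper spends its effort on.
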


\section{Preliminaries}
In this section, we construct a class of differential complexes for basic vector bundles of a formally integrable structure. 
Then we introduce notions of convexity and positivity determined by Levi flat structures. At last, we establish a Bochner-type formula and its variant by making an additional integration by parts.

\subsection{Construction of the differential complex}\label{constr.}
Let $\MV$ be a formally integrable structure of rank $n$ and corank $m$ on a smooth manifold $M$, we first introduce the notion of basic vector bundles over $(M,\MV)$.
\begin{definition}\label{blb.}
	A vector bundle $E$ over $(M,\MV)$ is called a basic vector bundle, if there exist trivializations $\{(U_\alpha,\zeta_\alpha)\}_\alpha$ of $E$ such that the transition matrices $\zeta_\alpha\circ\zeta_\beta^{-1}$ are annihilated by $\MV$. A section $e_\alpha$ over $U_\alpha$ is said to be basic, if the components $\zeta_\alpha( e_\alpha)$ are annihilated by $\MV$. We denote by $\mathcal{S}_\MV(E)$ $($or $\mathcal{O}_\MV(E)$$)$ the sheaf of germs of $L^2_{\text{loc}}$ $($smooth respectively$)$ basic sections of $E$, and call $\mathcal{S}_\MV(E)$ the structure sheaf of $E$.
\end{definition}

In an obvious way, one can define the concept of basic functions on $(M,\MV)$. We give some examples of basic vector bundles as follows.

\begin{example}\label{exa}
	$(i)$ If $\MV$ is a complex structure over $M$, then a basic vector bundle $E$ is exactly a holomorphic vector bundle.
	
	$(ii)$ $N^*\MV$ is a basic vector bundle provided that $\MV$ is locally integrable. In fact, it follows from the local integrability of $\MV$ that for any ${\lmfp}_0\in M$, there are an open neighborhood $U_0$ of ${\lmfp}_0$ and basic functions $v_1,\cdots,v_m\in  C^\infty(U_0)$ such that
	\begin{align*}
		{\rm span}\{{{\rm d}_{\mfp}v_1},\cdots,{{\rm d}_{\mfp}v_m}\}={N_{\mfp}^*\MV},\ {\lmfp}\in U_0.
	\end{align*}
	Let $(U,\{\rd v_\ell\}_{\ell=1}^m)$ and $(V,\{\rd w_\ell\}_{\ell=1}^m)$ be such neighborhoods with non-empty intersection. Then for any $1\leq\jmath\leq m$, there are functions $g_{\jmath\ell}\in C^\infty(U\cap V)$ such that
	\begin{align}\label{tf}
		\rd v_\jmath=\sum_{\jmath=1}^{m}g_{\jmath\ell}\rd w_\ell
	\end{align}
	holds on $U\cap V$, applying $\mathscr{L}_X$ to $(\ref{tf})$ yields that $X(g_{\jmath\ell})=0$ for all $1\leq\jmath,\ell\leq m$, where $X\in C^\infty(U\cap V,\MV)$ and $\mathscr{L}_X$ is the Lie derivative with respect to $X$.
	
	$(iii)$ $\Lambda^p N^*\MV$ is a basic vector bundle for any $1\leq p\leq m$ if $\MV$ is locally integrable.
	
	$(iv)$ If $\MV=\mathbb{C}TM$, then basic vector bundles are precisely flat vector bundles.
\end{example}

We now assume that $\MV$ is a formally integrable structure with the basic canonical bundle $K_{\MV}$. Let $\vartheta$ be a smooth $1$-form on $M$ satisfying
\begin{align}\label{mnf'}
	\rd\vartheta\equiv0\ {\rm mod}\ C^\infty(M,N^*\MV),
\end{align}
we define the differential operator $\dmv $ in the following sequence:
\begin{align}\label{complex11}
	\mathcal{A}_\MV^{m,0}(E)\stackrel{\dmv }{\longrightarrow}\mathcal{A}_\MV^{m,1}(E)
	\stackrel{\dmv }{\longrightarrow}\mathcal{A}_\MV^{m,2}(E)\stackrel{\dmv }{\longrightarrow}\cdots\stackrel{\dmv }{\longrightarrow}\mathcal{A}_\MV^{m,n}(E)\longrightarrow0,
\end{align}
where $E$ is a basic vector bundle of rank $r$ over $(M,\MV)$ and $\mathcal{A}_\MV^{m,q}(E)$ is the sheaf of germs of smooth sections of (\ref{bdl.}). For an open subset $U\subseteq M$, let
\begin{align}\label{lf.}
	\{X_1,\cdots,X_n,P_1,\cdots,P_m\}
\end{align}
be a smooth frame of $\mathbb{C}TM$ over $U$ with the dual frame
\begin{align}\label{df.}
	\{\omega^1,\cdots,\omega^n,\theta^1,\cdots,\theta^m\}
\end{align}
of $\mathbb{C}T^*M$ over $U$, where $\{X_1,\cdots,X_n\}$ and $\{\theta^1,\cdots,\theta^m\}$ span $\MV|_{_U}$ and $N^*\MV|_{_U}$ respectively. The integrability of $\MV$ implies that there are smooth 1-forms $\theta_\ell^\jmath$ on $U$ such that
\begin{align}\label{inte}
	\rd \theta^\jmath=\sum_{\ell=1}^{m}\theta_\ell^\jmath\wedge\theta^\ell,\ \jmath=1,\cdots,m,
\end{align}
and therefore the exterior derivative d satisfies
\begin{align}\label{ic}
	\rd (\mathcal{A}_\MV^{p,q})\subseteq \mathcal{A}_\MV^{p,q+1},\ \forall p,q\geq0,
\end{align}
where $\mathcal{A}_\MV^{p,*}$ is the sheaf of germs of smooth sections of $\Lambda_\MV^{p,*}$ (see (\ref{e12})).

The fact that $K_\MV $ is basic allows us to define a morphism
\begin{align*}
	\Xi_\MV:\mathcal{A}_\MV^{m,q}\rightarrow\mathcal{A}_\MV^{m,q+1},
\end{align*}
which admits the following local representation on $U$
\begin{align}\label{tr}
	\Xi_\MV|_{_U}:=\bigg(\sum_{\ell=1}^m\theta_\ell^\ell\bigg)\wedge,
\end{align}
where 
$\theta_\ell^\ell$'s are given by (\ref{inte}), in which the local frame $\{\theta^1,\cdots,\theta^m\}$ of $N^*\MV$ is chosen such that $\theta^1\wedge\cdots\wedge\theta^m$ is a local smooth basic section of $K_\MV$. Indeed, if $\{\eta^1,\cdots,\eta^m\}$ is another smooth frame of $N^*\MV$ over $U$ such that
\begin{align*}
	\theta^\jmath=\sum_{\ell=1}^{m}f_\ell^\jmath\eta^\ell,\ \jmath=1,\cdots,m,
\end{align*}
where functions $f_\ell^\jmath\in C^\infty(U)$ satisfy the condition that $\det(f_\ell^\jmath)$ is a basic function on $U$. Again by the integrability of $\MV$, there are smooth 1-forms $\eta_\ell^\jmath$ fulfilling
\begin{align*}
	\rd \eta^\jmath=\sum_{\ell=1}^{m}\eta_\ell^\jmath\wedge\eta^\ell,\ \jmath=1,\cdots,m,
\end{align*}
then
\begin{align}\label{dt1}
	\rd \theta^\jmath=\sum_{\ell=1}^{m}\rd f_\ell^\jmath\wedge\eta^\ell+\sum_{\ell=1}^{m}f_\ell^\jmath\rd \eta^\ell=\sum_{\ell=1}^{m}\rd f_\ell^\jmath\wedge\eta^\ell+\sum_{\imath,\ell=1}^{m}f_\ell^\jmath\eta_\imath^\ell\wedge\eta^\imath.
\end{align}
On the other hand,
\begin{align*}
	\rd \theta^\jmath=\sum_{\ell=1}^{m}\theta_\ell^\jmath\wedge\theta^\ell=\sum_{\imath,\ell=1}^{m}f_\imath^\ell\theta_\ell^\jmath\wedge\eta^\imath,
\end{align*}
which in conjunction with (\ref{dt1}) implies
\begin{align*}
	\sum_{\ell=1}^{m}f_\imath^\ell\theta_\ell^\jmath\equiv\bigg(\rd f_\imath^\jmath+\sum_{\ell=1}^{m}f_\ell^\jmath\eta_\imath^\ell\bigg)\ {\rm mod}\ C^\infty(U,N^*\MV).
\end{align*}
Thus,
\begin{align*}
	\theta_{\ell'}^\jmath\equiv \bigg(\sum_{\imath=1}^mh_{\ell'}^\imath\rd f_\imath^\jmath+\sum_{\imath,\ell=1}^{m}h_{\ell'}^\imath f_\ell^\jmath\eta_\imath^\ell\bigg)\ {\rm mod}\ C^\infty(U,N^*\MV),
\end{align*}
where $(h_{\ell'}^\imath)_{1\leq\imath,\ell'\leq m}$ is the inverse matrix of $(f_\imath^\ell)_{1\leq\imath,\ell\leq m}$. It turns out that
\begin{align*}
	\sum_{\jmath=1}^m\theta_{\jmath}^\jmath&\equiv\bigg(\sum_{\imath,\jmath=1}^mh_{\jmath}^\imath\rd f_\imath^\jmath+\sum_{\imath,\jmath,\ell=1}^{m}h_{\jmath}^\imath f_\ell^\jmath\eta_\imath^\ell\bigg)\ {\rm mod}\ C^\infty(U,N^*\MV)\\
	&=\bigg(\rd (\log\det(f_\imath^\jmath))+\sum_{\ell=1}^{m}\eta_\ell^\ell\bigg)\ {\rm mod}\ C^\infty(U,N^*\MV)\\
	&\equiv\bigg(\sum_{\ell=1}^{m}\eta_\ell^\ell\bigg)\ {\rm mod}\ C^\infty(U,N^*\MV),
\end{align*}
where the last line holds since $\det(f_\imath^\jmath)$ is a basic function. This confirms that (\ref{tr}) gives a well-defined morphism $\Xi_\MV:\mathcal{A}_\MV^{m,q}\rightarrow\mathcal{A}_\MV^{m,q+1}$. In particular, $\Xi_\MV$ can be chosen to vanish if $\MV$ is locally integrable.

Let $\{U_\alpha\}_\alpha$ be an open covering of $M$, and let
\begin{align*}
	\{(U_\alpha\cap U_\beta,g_{\alpha\beta})\}_{\alpha,\beta}
\end{align*}
be a basic cocycle defining $E$. For
\begin{align*}
	u=\{u_\alpha\otimes  e_\alpha\}_\alpha\in C^\infty(M,\Lambda_\MV^{m,q}(E)),
\end{align*}
where $u_\alpha:=(u_\alpha^1,\cdots,u_\alpha^r)$ in which $u_\alpha^a\in C^\infty(U_\alpha,\Lambda_\MV^{m,q})$ for all $1\leq a\leq r$, and $ e_\alpha:=(e_\alpha^1,\cdots,e_\alpha^r)^T$ is a smooth basic frame of $E$ over $U_\alpha$ satisfying $ e_\beta=g_{\alpha\beta}\cdot e_\alpha$ on $U_\alpha\cap U_\beta$. Then $u_\alpha=u_\beta\cdot g_{\alpha\beta}$, taking into account the fact that $g_{\alpha\beta}$ is annihilated by $\MV$ yields
\begin{align*}
	(\rd -\Xi_\MV)u_\alpha=\rd u_\beta\cdot g_{\alpha\beta}+(-1)^{q+m}u_\beta\wedge\rd g_{\alpha\beta}-\Xi_\MV(u_\beta\cdot g_{\alpha\beta})=(\rd -\Xi_\MV)u_\beta\cdot g_{\alpha\beta},
\end{align*}
where $(\rd -\Xi_\MV)$ acts componentwise on forms. The relation (\ref{ic}) immediately implies that $\rd u_\alpha^a\in C^\infty(U_\alpha,\Lambda_\MV^{m,q+1})$ for $1\leq a\leq r$. Hence, we can define the operator $\dmv:C^\infty(M,\Lambda_\MV^{m,q}(E))\rightarrow C^\infty(M,\Lambda_\MV^{m,q+1}(E))$ as follows:
\begin{align}\label{bo}
	\dmv u=\{(\rd -\Xi_\MV-\vartheta\wedge)u_\alpha\otimes  e_\alpha\}_\alpha,
\end{align}
where $\Xi_\MV$ is given by (\ref{tr}) and $\vartheta$ is a global smooth $1$-form satisfying (\ref{mnf'}). Henceforth, we will denote $\rd_{\MV,0}$ by $\dV$ for simplicity.

For later use, we calculate the local expression of the operator $\dmv$ on $U_\alpha$ in terms of arbitrary local frame $\tilde{e}_\alpha$ of $E$. Let $\tilde{f}_\alpha:U_\alpha\rightarrow GL(r,\mathbb{C})$ be a smooth map such that $e_\alpha=\tilde{f}_\alpha\cdot\tilde{e}_\alpha$, then for $u=u_\alpha\otimes e_\alpha=\tilde{u}_\alpha\otimes\tilde{e}_\alpha\in C^\infty(U_\alpha,\Lambda_\MV^{m,q}(E))$
\begin{align}\label{lefaf}
	\dmv u&=(\rd -\Xi_\MV-\vartheta\wedge)u_\alpha\otimes e_\alpha\nonumber\\
	&=(\rd -\Xi_\MV-\vartheta\wedge)(\tilde{u}_\alpha\cdot\tilde{f}_\alpha^{-1})\otimes\tilde{f}_\alpha\cdot\tilde{e}_\alpha\nonumber\\
	&=(\rd -\Xi_\MV-\vartheta\wedge)\tilde{u}_\alpha\otimes\tilde{e}_\alpha+(-1)^{q+m}\tilde{u}_\alpha\wedge\rd\tilde{f}_\alpha^{-1}\cdot\tilde{f}_\alpha\otimes\tilde{e}_\alpha.
\end{align}
The operator $\dmv$ thus depends a priori on the choice of basic cocycles, however, the difference between two such operators (arising from different basic cocycles) can be explicitly characterized.
\begin{prop}
	Let $\dmv$ and $\dmv^{'}$ denote the operators determined by two basic cocycles $\{(U_\alpha\cap U_\beta,g_{\alpha\beta})\}_{\alpha,\beta}$ and $\{(U_\alpha'\cap U_\beta',g_{\alpha\beta}')\}_{\alpha,\beta}$ respectively. Then $\dmv-\dmv^{'}$ is a well-defined zero-order operator. Moreover, the operator $\dmv$ is uniquely determined by the class $[\{g_{\alpha\beta}\}_{\alpha,\beta}]\in H^1(M,GL(r,\mathcal{O}_\MV ))$, where $\mathcal{O}_\MV $ is the sheaf of germs of smooth basic functions.
\end{prop}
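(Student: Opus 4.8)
The plan is to separate the two claims. Throughout, after refining $\{U_\alpha\}_\alpha$ and $\{U_\alpha'\}_\alpha$ to a common cover, I assume both cocycles live over the same $\{U_\alpha\}_\alpha$; since each describes the bundle $E$, the two systems of frames are related by smooth maps $c_\alpha\colon U_\alpha\to GL(r,\mathbb{C})$ with $e_\alpha'=c_\alpha\cdot e_\alpha$, so that $g_{\alpha\beta}'=c_\beta\,g_{\alpha\beta}\,c_\alpha^{-1}$. The key observation, already used in the derivation of $(\ref{bo})$, is that for $\beta\in C^\infty(U_\alpha,N^*\MV)$ and $w\in\Lambda_\MV^{m,q}$ one has $w\wedge\beta=0$: indeed $w$ already carries the full factor $\Lambda^m N^*\MV$ (see $(\ref{e12})$) and $\mathrm{rank}\,N^*\MV=m$, so $w\wedge\beta\in\Lambda^{m+1}N^*\MV\wedge\Lambda^q\mathbb{C}T^*M=0$.

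For the first claim I would invoke the principal symbol. From $(\ref{bo})$ and $(\ref{tr})$ the operator acts componentwise as $\rd-\Xi_\MV-\vartheta\wedge$, and both $\Xi_\MV$ and $\vartheta\wedge$ are algebraic; hence for every $\phi\in C^\infty(M)$ one has $\dmv(\phi u)=\phi\,\dmv u+\rd\phi\wedge u$, so the principal symbol of $\dmv$ is $\xi\mapsto\xi\wedge$, independent of the chosen basic cocycle. The same holds for $\dmv'$, whence $\dmv-\dmv'$ has vanishing symbol and is a zero-order operator; being the difference of two globally defined operators on $C^\infty(M,\Lambda_\MV^{m,q}(E))$, it is automatically well defined. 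Concretely, reading off $(\ref{lefaf})$ with $\tilde f_\alpha=c_\alpha$ gives $(\dmv'-\dmv)u=(-1)^{q+m}\,u_\alpha\wedge(\rd c_\alpha^{-1}\cdot c_\alpha)\otimes e_\alpha$, which is manifestly $C^\infty(M)$-linear.

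For the second claim I must show that if $\{g_{\alpha\beta}\}$ and $\{g_{\alpha\beta}'\}$ represent the same class in $H^1(M,GL(r,\mathcal{O}_\MV))$ then $\dmv=\dmv'$ exactly. The point is that two basic frame systems of one basic bundle are related by a \emph{basic} matrix: if $e_\alpha,e_\alpha'$ are both basic then $c_\alpha=\zeta_\alpha(e_\alpha')\,\zeta_\alpha(e_\alpha)^{-1}$ is a product (and inverse) of $\MV$-annihilated matrices, hence $\MV$-annihilated, and this is precisely the coboundary relation defining the class. Thus I may take $c_\alpha$ basic, so $\rd c_\alpha^{-1}$ has entries in $C^\infty(U_\alpha,N^*\MV)$. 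The zero-order term from the first claim then contains $u_\alpha\wedge\rd c_\alpha^{-1}$, which vanishes by the key observation; therefore $(\dmv'-\dmv)u=0$ and the operator depends only on $[\{g_{\alpha\beta}\}]$.

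I expect the conceptual heart, and the only genuine obstacle, to be recognizing that the entire dependence of $\dmv$ on the cocycle is through a single connection-type term, and that a \emph{basic} gauge change perturbs it by an $N^*\MV$-valued matrix of $1$-forms which is annihilated upon wedging with the $\Lambda^m N^*\MV$ factor carried by every element of $\Lambda_\MV^{m,q}$. The remaining work is bookkeeping: passing to a common refinement so the two covers agree, checking that basicness of $c_\alpha$ is equivalent to the $H^1(M,GL(r,\mathcal{O}_\MV))$-coboundary condition, and tracking the Koszul signs in the Leibniz expansions, none of which I anticipate causing difficulty.
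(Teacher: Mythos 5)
Your proposal follows essentially the same route as the paper: pass to a common refinement, relate the two frame systems by smooth matrices, read off the difference of the operators from \eqref{lefaf} as a wedge with $\rd c_\alpha^{-1}\cdot c_\alpha$, and observe that this term dies when $c_\alpha$ is basic because an element of $\Lambda_\MV^{m,q}$ already carries the full factor $\Lambda^m N^*\MV$, so wedging with an $N^*\MV$-valued form gives zero. Your symbol/Leibniz argument for the zero-order claim is a slightly cleaner packaging than the paper's explicit verification that the local expressions patch on overlaps (the paper checks $u_\alpha'\wedge\rd f_\alpha^{-1}\cdot f_\alpha\otimes e_\alpha'=u_\beta'\wedge\rd f_\beta^{-1}\cdot f_\beta\otimes e_\beta'$ using $f_\beta\cdot g_{\alpha\beta}'=g_{\alpha\beta}\cdot f_\alpha$), but both arguments are fine and establish the same bundle homomorphism $B_\MV$.

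There is, however, one genuine misstep in your treatment of the second claim. You assert that ``two basic frame systems of one basic bundle are related by a basic matrix,'' arguing that $c_\alpha=\zeta_\alpha(e_\alpha')\,\zeta_\alpha(e_\alpha)^{-1}$ is a product of $\MV$-annihilated matrices. This is false as stated: $e_\alpha'$ is basic with respect to the \emph{primed} trivialization $\zeta_\alpha'$, so $\zeta_\alpha(e_\alpha')$ is $\MV$-annihilated only if the change of trivialization $\zeta_\alpha\circ(\zeta_\alpha')^{-1}$ is itself basic, which is exactly what is in question. If your claim were true, it would show that any two basic cocycles for the same smooth bundle are cohomologous in $H^1(M,GL(r,\mathcal{O}_\MV))$ and hence that $\dmv$ does not depend on the cocycle at all --- which would trivialize the proposition and is false already for $\MV$ a complex structure (two non-isomorphic holomorphic structures on the same smooth bundle). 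The correct, and much shorter, justification is the one the paper uses implicitly: ``representing the same class in $H^1(M,GL(r,\mathcal{O}_\MV))$'' \emph{means by definition} that $g_{\alpha\beta}'=c_\beta\,g_{\alpha\beta}\,c_\alpha^{-1}$ for some basic $c_\alpha$; with that choice $\rd c_\alpha^{-1}$ is $N^*\MV$-valued and your key observation finishes the argument. With this one sentence repaired, the proof is complete.
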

\begin{proof}
	By taking a common refinement, we may assume that both cocycles are defined over the same covering $\{U_\alpha\}_{\alpha}$. Let $\{(U_\alpha, e_\alpha)\}_\alpha$ and $\{(U_\alpha, e_\alpha')\}_\alpha$ be the corresponding frames of $E$ with respect to $\{(U_\alpha\cap U_\beta,g_{\alpha\beta})\}_{\alpha,\beta}$ and $\{(U_\alpha\cap U_\beta,g_{\alpha\beta}')\}_{\alpha,\beta}$ respectively. There exist smooth maps $f_\alpha:U_\alpha\rightarrow GL(r,\mathbb{C})$ such that
	\begin{align*}
		e_\alpha=f_\alpha\cdot e_\alpha'\ \text{on}\ U_\alpha.
	\end{align*}
	Then, we have
	\begin{align*}
		f_\beta\cdot g_{\alpha\beta}'\cdot e_\alpha'=f_\beta \cdot e_\beta'=e_\beta=g_{\alpha\beta}\cdot f_\alpha\cdot e_\alpha'\ \text{on}\ U_\alpha\cap U_\beta,
	\end{align*}
	which gives
	\begin{align}\label{ffb}
		f_\beta\cdot g_{\alpha\beta}'=g_{\alpha\beta}\cdot f_\alpha\ \text{on}\ U_\alpha\cap U_\beta.
	\end{align}
	For $u=\{u_\alpha\otimes e_\alpha\}_\alpha=\{u_\alpha'\otimes e_\alpha'\}_\alpha\in C^\infty(M,\Lambda_\MV^{m,q}(E))$, by (\ref{lefaf}) we obtain
	\begin{align*}
		\dmv u-\dmv^{'}u=(-1)^{q+m}u_\alpha'\wedge\rd f_\alpha^{-1}\cdot f_\alpha\otimes e_\alpha'=:B_\MV u,
	\end{align*}
	where $B_\MV:\Lambda_\MV^{m,q}(E)\rightarrow\Lambda_\MV^{m,q+1}(E)$ is a well-defined bundle homomorphism, since by (\ref{ffb}) we know that
	\begin{align*}
		u_\alpha'\wedge\rd f_\alpha^{-1}\cdot f_\alpha\otimes e_\alpha'&=u_\alpha'\wedge\rd(g_{\alpha\beta}'^{-1}\cdot f_\beta^{-1}\cdot g_{\alpha\beta})\cdot f_\alpha\otimes e_\alpha'\\
		&=u_\alpha'\wedge(g_{\alpha\beta}'^{-1}\cdot\rd f_\beta^{-1}\cdot g_{\alpha\beta})\cdot f_\alpha\otimes e_\alpha'\\
		&=(u_\alpha'\cdot g_{\alpha\beta}'^{-1})\wedge\rd f_\beta^{-1}\cdot(g_{\alpha\beta}\cdot f_\alpha)\otimes e_\alpha'\\
		&=u_\beta'\wedge\rd f_\beta^{-1}\cdot(f_\beta\cdot g_{\alpha\beta}')\otimes e_\alpha'\\
		&=u_\beta'\wedge\rd f_\beta^{-1}\cdot f_\beta\otimes e_\beta'.
	\end{align*}
	In particular, $B_\MV=0$ if the matrix-valued functions $f_\alpha$ are chosen to be basic. Thus, the operator $\dmv$ is determined by the class $[\{g_{\alpha\beta}\}_{\alpha,\beta}]\in H^1(M,GL(r,\mathcal{O}_\MV ))$.
\end{proof}

It remains to prove that the sequence (\ref{complex11}) is actually a complex as follows.
\begin{prop}
	$\dmv^2=0$.
\end{prop}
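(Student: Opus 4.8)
The plan is to reduce the identity $\dmv^2=0$ to a purely local statement about the componentwise operator and then exploit the integrability relation (\ref{inte}). Since $\dmv$ acts componentwise through $D:=\rd-\Xi_\MV-\vartheta\wedge$ in a basic frame $e_\alpha$ (see (\ref{bo})), and the frame is annihilated by $\MV$, one has $\dmv^2 u=\{D^2u_\alpha\otimes e_\alpha\}_\alpha$; thus it suffices to prove $D^2=0$ on $\mathcal{A}_\MV^{m,q}$ over each trivializing open set $U$. Writing $\omega:=\sum_{\ell=1}^m\theta_\ell^\ell+\vartheta$, so that $D=\rd-\omega\wedge$ with $\omega$ a $1$-form, a direct expansion using $\rd^2=0$, $\omega\wedge\omega=0$ and the Leibniz rule $\rd(\omega\wedge u)=\rd\omega\wedge u-\omega\wedge\rd u$ gives
\begin{align*}
D^2 u=-\rd\omega\wedge u.
\end{align*}
Hence everything reduces to proving $\rd\omega\wedge u=0$ for every $u\in\Lambda_\MV^{m,q}$.

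For this I would split $\rd\omega=\rd\big(\sum_\ell\theta_\ell^\ell\big)+\rd\vartheta$ and treat the two summands separately, using that every $u\in\Lambda_\MV^{m,q}=\Lambda^m N^*\MV\wedge\Lambda^q\mathbb{C}T^*M$ is locally of the form $u=\Theta\wedge v$, where $\Theta:=\theta^1\wedge\cdots\wedge\theta^m$ is the chosen local basic section of $K_\MV$ and $v\in\Lambda^q\mathbb{C}T^*M$. The term $\rd\vartheta$ is handled by (\ref{mnf'}): since $\rd\vartheta$ lies in the ideal generated by $N^*\MV$, writing $\rd\vartheta=\sum_\jmath\theta^\jmath\wedge\psi_\jmath$ and commuting $\theta^\jmath$ next to $\Theta$ gives $\rd\vartheta\wedge u=\sum_\jmath\pm\,\psi_\jmath\wedge(\theta^\jmath\wedge\Theta)\wedge v=0$, because $\theta^\jmath\wedge\Theta=0$ in $\Lambda^{m+1}N^*\MV$. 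For the term $\sum_\ell\theta_\ell^\ell=:\xi$, the key is the structural identity
\begin{align*}
\rd\Theta=\xi\wedge\Theta,
\end{align*}
which follows from (\ref{inte}) after expanding $\rd\Theta=\sum_\jmath(-1)^{\jmath-1}\theta^1\wedge\cdots\wedge\rd\theta^\jmath\wedge\cdots\wedge\theta^m$ and noting that only the diagonal term $\ell=\jmath$ in $\rd\theta^\jmath=\sum_\ell\theta_\ell^\jmath\wedge\theta^\ell$ survives (the other terms repeat a conormal factor). Applying $\rd$ once more and using $\xi\wedge\xi=0$ yields $0=\rd^2\Theta=\rd\xi\wedge\Theta$, whence $\rd\xi\wedge u=(\rd\xi\wedge\Theta)\wedge v=0$. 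Combining the two, $\rd\omega\wedge u=0$, and therefore $D^2u=0$.

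The main point — and the only place requiring care — is the interplay between the local ambiguity of $\Xi_\MV$ and the filtration. The form $\xi=\sum_\ell\theta_\ell^\ell$ is only defined modulo $C^\infty(U,N^*\MV)$, but any such conormal correction wedges to zero against $u=\Theta\wedge v$ exactly as in the $\rd\vartheta$ computation, so both the definition of $D$ and the identity $\rd\xi\wedge\Theta=0$ are independent of the choices made; this is precisely the compatibility already verified when $\Xi_\MV$ was shown to be well defined. Since $D^2=0$ holds on every $U$ and $\dmv$ is the globally defined operator induced by $D$, I conclude $\dmv^2=0$ on $M$.
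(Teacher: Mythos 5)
Your argument is correct, and it reaches the conclusion by a route that differs from the paper's at the decisive step. Both proofs begin the same way: expanding $\dmv^2=(\rd-\Xi_\MV-\vartheta\wedge)^2$ componentwise in a basic frame and observing that everything collapses to the zero-order operator $-\big(\rd\big(\sum_\ell\theta_\ell^\ell\big)+\rd\vartheta\big)\wedge$, with the $\rd\vartheta$ part killed by (\ref{mnf'}). Where you diverge is in showing that $\rd\xi\wedge$ annihilates $\Lambda_\MV^{m,q}$ for $\xi=\sum_\ell\theta_\ell^\ell$: the paper differentiates the structure equations (\ref{inte}) to obtain the curvature-type congruence (\ref{inte1}), $\rd\theta_\ell^\jmath\equiv\sum_\imath\theta_\imath^\jmath\wedge\theta_\ell^\imath$ modulo the conormal ideal, and then cancels the trace $\sum_{\imath,\ell}\theta_\imath^\ell\wedge\theta_\ell^\imath$ by antisymmetry; you instead establish the identity $\rd\Theta=\xi\wedge\Theta$ (i.e., $\xi$ is the connection form of $K_\MV=\det N^*\MV$ in the frame $\Theta$) and extract $\rd\xi\wedge\Theta=0$ from $\rd^2\Theta=0$. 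The two facts are equivalent --- your identity is exactly the trace of (\ref{inte1}) tested against $\Theta$ --- but your derivation is shorter and more conceptual: it makes transparent that the vanishing is forced by $\rd^2=0$ applied to the determinant section, without ever writing the double sum. The paper's computation, on the other hand, records the full congruence (\ref{inte1}), which is the kind of componentwise information one might want to reuse elsewhere. Your closing remark on the ambiguity of $\xi$ modulo $C^\infty(U,N^*\MV)$ is apt but not strictly needed here, since the proof works with the fixed choice of $\theta_\ell^\jmath$ entering the definition of $\Xi_\MV$; it does, however, correctly echo the well-definedness discussion preceding the proposition.
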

\begin{proof}
	Applying d to (\ref{inte}) we obtain that for $1\leq\jmath\leq m$
	\begin{align*}
		0=\sum_{\ell=1}^{m}\rd \theta_\ell^\jmath\wedge\theta^\ell-\sum_{\ell=1}^{m}\theta_\ell^\jmath\wedge\rd \theta^\ell=\sum_{\ell=1}^{m}\rd \theta_\ell^\jmath\wedge\theta^\ell-\sum_{\imath,\ell=1}^{m}\theta_\ell^\jmath\wedge\theta_\imath^\ell\wedge\theta^\imath,
	\end{align*}
	which yields
	\begin{align}\label{inte1}
		\rd \theta_\ell^\jmath\equiv\bigg(\sum_{\imath=1}^{m}\theta_\imath^\jmath\wedge\theta_\ell^\imath\bigg)\ {\rm mod}\ C^\infty(U,N^*\MV),\ \jmath,\ell=1,\cdots,m.
	\end{align}
	Hence, for any open subset $U\subseteq M$ with local frame (\ref{df.}) of $\mathbb{C}T^*M$, we have
	\begin{align*}
		\dmv^2&=(\rd -\Xi_\MV-\vartheta\wedge)\circ(\rd -\Xi_\MV-\vartheta\wedge)\\
		&=-\sum_{\ell=1}^m\rd (\theta_{\ell}^\ell\wedge)-\rd(\vartheta\wedge)-\sum_{\ell=1}^m\theta_{\ell}^\ell\wedge\rd +\sum_{\jmath,\ell=1}^m\theta_{\jmath}^\jmath\wedge\theta_{\ell}^\ell\wedge-\vartheta\wedge\rd\\
		&=-\sum_{\ell=1}^m\rd \theta_{\ell}^\ell\wedge-\rd\vartheta\wedge\\
		&\equiv-\bigg(\sum_{\imath,\ell=1}^m\theta_\imath^\ell\wedge\theta_\ell^\imath\wedge\bigg)\ {\rm mod}\ C^\infty(U,N^*\MV)\\
		&=-\bigg(\sum_{\imath<\ell}(\theta_\imath^\ell\wedge\theta_\ell^\imath-\theta_\imath^\ell\wedge\theta_\ell^\imath)\wedge\bigg)\ {\rm mod}\ C^\infty(U,N^*\MV)\\
		&=0,
	\end{align*}
	where the fourth line is valid by (\ref{inte1}) and (\ref{mnf'}).
\end{proof}

For $0\leq q\leq n$, we introduce the presheaf $\mathcal{L}_\MV^{m,q}(E)$ as follows:
\begin{align}\label{subs}
	\big(\mathcal{L}_\MV^{m,q}(E)\big)(U):=\left\{u\in L_{loc}^2(U,\Lambda_\MV^{m,q}(E))\ |\ \dmv u\in L_{loc}^2(U,\Lambda_\MV^{m,q+1}(E))\right\},
\end{align}
where $U$ is any open subset of $M$. Obviously, (\ref{subs}) defines a complete presheaf over $M$, and we denote by the same symbol $\mathcal{L}_\MV^{m,q}(E)$ the sheaf generated by it. The complex (\ref{complex11}) gives rise naturally to the following complex of sheaf morphisms over $M$
\begin{align}\label{complex12}
	\mathcal{L}_\MV^{m,0}(E)\stackrel{\dmv }{\longrightarrow}\mathcal{L}_\MV^{m,1}(E)
	\stackrel{\dmv }{\longrightarrow}\mathcal{L}_\MV^{m,2}(E)\stackrel{\dmv }{\longrightarrow}\cdots\stackrel{\dmv }{\longrightarrow}\mathcal{L}_\MV^{m,n}(E)\longrightarrow0.
\end{align}
The complex (\ref{complex11}) is thus the subcomplex of the complex (\ref{complex12}).

\subsection{Notions of \texorpdfstring{$q$}{}-convexity and \texorpdfstring{$q$}{}-positivity}
Let $L$ be a basic line bundle over $(M,\MV)$, fix a Hermitian metric $h_{_L}$ on $L$. Let $\{U_\alpha\}_\alpha$ be an open covering of $M$ and $\sigma_\alpha$ a smooth basic frame of $L$ over $U_\alpha$. We may assume that
\begin{align*}
	\sigma_\beta=\sigma_\alpha\phi_{\alpha\beta}\ \text{on}\ U_\alpha\cap U_\beta,
\end{align*}
where $\phi_{\alpha\beta}$ is the basic transition function of $L$ for each pair of $(\alpha,\beta)$. Set 
\begin{align*}
	h_{_L}(\sigma_\alpha,\sigma_\alpha)=e^{-\phi_\alpha},
\end{align*}
then
\begin{align}\label{lm.}
	\phi_\alpha=\phi_\beta+\log|\phi_{\alpha\beta}|^2\ \text{on}\ U_\alpha\cap U_\beta.
\end{align}
Since the transition functions $\phi_{\alpha\beta}$ are annihilated by $\MV$, there exists a well-defined subset
\begin{align*}
	\mathcal{K}_{h_L}\subseteq M
\end{align*}
such that for each $\alpha$
\begin{align*}
	\mathcal{K}_{h_L}\cap U_\alpha=\{{\lmfp}\in U_\alpha\ |\ X_{\mfp}(\phi_\alpha)=0,\ X_{\mfp}\in\MV_{\mfp}\cap\overline{\MV}_{\mfp}\}.
\end{align*}

From now on, we assume $\MV$ is complex Frobenius structure, i.e., for any smooth local frame $\{X_1,\cdots,X_n\}$ of $\MV$, there are smooth local functions $d_{jk}^l, e_{jk}^l$ such that
\begin{equation}\label{cbl}
	\left[X_j,\bar{X}_k\right]=\sum_{l=1}^nd_{jk}^lX_l-\sum_{l=1}^ne_{jk}^l\bar{X}_l.
\end{equation}

\begin{remark}
	The coefficients $ d_{jk}^l $ and $ e_{jk}^l $ need not be uniquely determined.
\end{remark} 

The discrepancy relation (\ref{lm.}) allows us to introduce a quadratic form on $\MV_{\mfp}$ at $\lmfp\in\mathcal{K}_{h_L}$ for a complex Frobenius structure $\MV$.
\begin{definition}\label{quadratic.}
	Let $\MV$ be a complex Frobenius structure and ${\lmfp}\in\mathcal{K}_{h_L}\cap U_\alpha$, the quadratic form for the metric $h_{_L}$ on $\MV_{\mfp}$ is defined by
	\begin{equation}\label{qf.}
		Q_{h_L,{\mfp}}(\xi,\xi)={\rm Re}\sum_{j,k=1}^n\bigg(X_j\bar{X}_k\phi_\alpha({\lmfp})+\sum_{l=1}^ne_{jk}^l({\lmfp})\bar{X}_l\phi_\alpha({\lmfp})\bigg)\xi_{j}\bar{\xi}_{k},
	\end{equation}
	where $\{X_j\}_{j=1}^n$ is a smooth frame of $\MV$ over $U_\alpha$ and $ \xi=\sum_{j=1}^n\xi_jX_j|_{\mfp}\in\MV_{\mfp} $. In particular, one can define the quadratic form $Q_{\varphi,{\mfp}}$ like $(\ref{qf.})$ at ${\lmfp}\in\mathcal{K}_\varphi$ for a real-valued function $\varphi\in C^2(M)$ since $e^{-\varphi}$ can be viewed as a metric on the trivial bundle.
\end{definition}

This definition for a real-valued function $\varphi\in C^2(M)$ is essentially due to L. H{\"o}rmander (see \cite{Hlv65JT}). There is no ambiguity for the definition of $Q_{h_L,{\mfp}}$.
\begin{lemma}
	The quadratic form $Q_{h_L,\mfp}$ is well-defined.
\end{lemma}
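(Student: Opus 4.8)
The plan is to verify that $Q_{h_L,\mfp}$ is independent of the three choices implicit in (\ref{qf.}): the coefficients $e_{jk}^l$ of the bracket relation (\ref{cbl}), which as observed need not be unique; the smooth frame $\{X_j\}_{j=1}^n$ of $\MV$; and the basic frame $\sigma_\alpha$ of $L$ that determines the weight $\phi_\alpha$. Throughout, the restriction $\mfp\in\mathcal{K}_{h_L}$ and the operation $\mathrm{Re}$ in (\ref{qf.}) are essential. I would organize the argument as these three independent checks, the first being where the Levi-flat geometry enters.

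For the coefficient ambiguity, suppose $[X_j,\bar X_k]=\sum_l d_{jk}^l X_l-\sum_l e_{jk}^l\bar X_l=\sum_l \tilde d_{jk}^l X_l-\sum_l \tilde e_{jk}^l\bar X_l$. The two decompositions differ by the vector field $Z_{jk}:=\sum_l(d_{jk}^l-\tilde d_{jk}^l)X_l=\sum_l(e_{jk}^l-\tilde e_{jk}^l)\bar X_l$, which lies in $\MV$ by its first expression and in $\overline{\MV}$ by its second, hence in $\MV\cap\overline{\MV}$. Since $e_{jk}^l$ enters (\ref{qf.}) only through $\sum_l e_{jk}^l\bar X_l\phi_\alpha$, the resulting change equals $Z_{jk}(\phi_\alpha)$. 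Because $Z_{jk}|_\mfp\in\MV_\mfp\cap\overline{\MV}_\mfp$ and $\mfp\in\mathcal{K}_{h_L}$, the defining property $X_\mfp(\phi_\alpha)=0$ for $X_\mfp\in\MV_\mfp\cap\overline{\MV}_\mfp$ forces $Z_{jk}|_\mfp(\phi_\alpha)=0$, so (\ref{qf.}) is unchanged. This is the step I expect to be the main obstacle, and it is precisely the Levi flatness of $\MV$ (ensuring $Z_{jk}$ lands in $\MV\cap\overline{\MV}$) together with the passage to the critical locus $\mathcal{K}_{h_L}$ that makes the definition legitimate.

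For frame independence, write $Y_j=\sum_p a_{jp}X_p$ with $(a_{jp})$ invertible. Expanding the second-order term gives
\[
Y_j\bar Y_k\phi_\alpha=\sum_{p,q}a_{jp}\bar a_{kq}\,X_p\bar X_q\phi_\alpha+\sum_{p,q}a_{jp}(X_p\bar a_{kq})\bar X_q\phi_\alpha,
\]
while expanding $[Y_j,\bar Y_k]$ and collecting its $\bar X$-components yields, for the coefficients $e^{\prime s}_{jk}$ naturally read off in the $Y$-frame,
\[
\sum_s e^{\prime s}_{jk}\,\bar Y_s\phi_\alpha=\sum_{p,q,l}a_{jp}\bar a_{kq}e_{pq}^l\,\bar X_l\phi_\alpha-\sum_{p,q}a_{jp}(X_p\bar a_{kq})\bar X_q\phi_\alpha.
\]
Adding the two identities, the first-order pieces cancel, so the bracketed quantity in (\ref{qf.}) transforms as $\sum_{p,q}a_{jp}\bar a_{kq}\big(X_p\bar X_q\phi_\alpha+\sum_l e_{pq}^l\bar X_l\phi_\alpha\big)$. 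Contracting against $\xi'_j\bar\xi'_k$ and using $\xi_p=\sum_j\xi'_j a_{jp}$ recovers the $\{X_j\}$-expression verbatim, so $Q_{h_L,\mfp}$ is frame-independent, even pointwise off $\mathcal{K}_{h_L}$; and by the previous step any other admissible choice of $e^{\prime s}_{jk}$ gives the same value at $\mfp$.

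For trivialization independence, relation (\ref{lm.}) gives $\phi_\alpha-\phi_\beta=\log|\phi_{\alpha\beta}|^2=:u+\bar u$ with $u=\log\phi_{\alpha\beta}$ (choosing a local branch). Since $\phi_{\alpha\beta}$ is basic, $X_j u=0$ for every $X_j\in\MV$, and conjugating, $\bar X_k\bar u=0$. Hence $\bar X_l(u+\bar u)=\bar X_l u$, and using $X_j u=X_l u=0$ together with (\ref{cbl}),
\[
X_j\bar X_k(u+\bar u)=[X_j,\bar X_k]u=-\sum_l e_{jk}^l\,\bar X_l u,
\]
so the second-order and bracket contributions of $\phi_\alpha-\phi_\beta$ to (\ref{qf.}) cancel identically. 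Therefore the weight may be replaced by $\phi_\beta$ without affecting $Q_{h_L,\mfp}$. Combining the three checks completes the verification.
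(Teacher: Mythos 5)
Your proof is correct and follows essentially the same three-step route as the paper: independence of the coefficients $e_{jk}^l$ via the observation that the ambiguity is a section of $\MV\cap\overline{\MV}$ killed on $\mathcal{K}_{h_L}$, frame independence via the cancellation of the first-order terms under $Y_j=\sum_p a_{jp}X_p$, and trivialization independence via $X_j\bar X_k\log|\phi_{\alpha\beta}|^2=[X_j,\bar X_k]\log\phi_{\alpha\beta}$. The only (harmless) cosmetic difference is in the frame-change step, where you verify the identity for the canonically induced coefficients and then invoke your first step, whereas the paper handles arbitrary coefficients $\hat e_{jk}^l$ directly by using the critical-point condition there as well.
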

\begin{proof}
	We will deduce the conclusion in the following three steps.
	
	\emph{Step 1.} We first show that on each $U_\alpha\cap\mathcal{K}_{h_L}$ the quadratic form (\ref{qf.}) is independent of the choice of the functions $e_{jk}^l$. Indeed, if $\tilde{e}_{jk}^l$ and $\tilde{d}_{jk}^l$ satisfy (\ref{cbl}), then
	\begin{align*}
		\sum_{l=1}^n(\tilde{e}_{jk}^l-e_{jk}^l)\bar{X}_l=\sum_{l=1}^n(\tilde{d}_{jk}^l-d_{jk}^l)X_l,
	\end{align*}
	it follows from ${\lmfp}\in\mathcal{K}_{h_L}\cap U_\alpha$ that
	\begin{align*}
		\sum_{l=1}^n\tilde{e}_{jk}^l({\lmfp})\bar{X}_l\phi_\alpha({\lmfp})=\sum_{l=1}^ne_{jk}^l({\lmfp})\bar{X}_l\phi_\alpha({\lmfp}).
	\end{align*}
	
	\emph{Step 2.} Let $\{Y_j\}_{j=1}^n$ be another frame of $\MV$ over $U_\alpha$, then $Y_j=\sum_{k=1}^na_{jk}X_k$ for $1\leq j\leq n$ where $(a_{jk})_{1\leq j,k\leq n}$ is a $GL(n,\mathbb{C})$-valued smooth function. If
	\begin{align*}
		\left[Y_j,\bar{Y}_k\right]=\sum_{l=1}^n\hat{d}_{jk}^lY_l-\sum_{l=1}^n\hat{e}_{jk}^l\bar{Y}_l,
	\end{align*}
	then
	\begin{align*}
		\sum_{l,j'=1}^{n}\left[a_{jl}X_l,\bar{a}_{kj'}\bar{X}_{j'}\right]=\left[Y_j,\bar{Y}_k\right]=\sum_{l,j'=1}^n\hat{d}_{jk}^la_{lj'}X_{l'}-\sum_{l,j'=1}^n\hat{e}_{jk}^l\bar{a}_{lj'}\bar{X}_{l'},
	\end{align*}
	combined with (\ref{cbl}) implies
	\begin{align*}
		&\sum_{j'=1}^n\bigg(\sum_{l,k'=1}^na_{jl}\bar{a}_{k{k'}}d_{l{k'}}^{j'}-\sum_{l'=1}^n\hat{d}_{jk}^{l'}a_{{l'}{j'}}-\sum_{k'=1}^n\bar{a}_{k{k'}}\bar{X}_{k'}(a_{j{j'}})\bigg)X_{j'}\\
		=&\sum_{j'=1}^n\bigg(\sum_{l,k'=1}^na_{jl}\bar{a}_{k{k'}}e_{l{k'}}^{j'}-\sum_{l'=1}^n\hat{e}_{jk}^{l'}\bar{a}_{{l'}{j'}}-\sum_{l=1}^na_{jl}X_l(\bar{a}_{k{j'}})\bigg)\bar{X}_{j'}.
	\end{align*}
	Since ${\lmfp}\in\mathcal{K}_{h_L}\cap U_\alpha$, we have 
	\begin{align*}
		\sum_{l,j',k'=1}^na_{jl}({\lmfp})\bar{a}_{k{k'}}({\lmfp})e_{l{k'}}^{j'}({\lmfp})\bar{X}_{j'}\phi_\alpha({\lmfp})-\sum_{l,j'=1}^na_{jl}({\lmfp})X_l(\bar{a}_{k{j'}})({\lmfp})\bar{X}_{j'}\phi_\alpha({\lmfp})=\sum_{j',l'=1}^n\hat{e}_{jk}^{l'}({\lmfp})\bar{a}_{{l'}{j'}}({\lmfp})\bar{X}_{j'}\phi_\alpha({\lmfp}),
	\end{align*}
	which gives
	\begin{align*}
		Y_j\bar{Y}_k\phi_\alpha({\lmfp})+\sum_{l=1}^n\hat{e}_{jk}^l({\lmfp})\bar{Y}_l\phi_\alpha({\lmfp})=\sum_{j',k'=1}^na_{jj'}({\lmfp})\bar{a}_{k{k'}}({\lmfp})\bigg(X_{j'}\bar{X}_{k'}\phi_\alpha({\lmfp})+\sum_{l=1}^ne_{j'k'}^l({\lmfp})\bar{X}_l\phi_\alpha({\lmfp})\bigg).
	\end{align*}
	The quadratic form (\ref{qf.}) is thereby invariant under non-singular linear transformations of local frames of $\MV$.
	
	\emph{Step 3.} For any ${\lmfp}\in\mathcal{K}_{h_L}\cap U_\alpha\cap U_\beta$, the fact that $\phi_{\alpha\beta}$'s are annihilated by $\MV$, together with the relation (\ref{lm.}), leads to
	\begin{align*}
		&X_j\bar{X}_k\phi_\alpha({\lmfp})+\sum_{l=1}^ne_{jk}^l({\lmfp})\bar{X}_l\phi_\alpha({\lmfp})\\
		=&X_j\bar{X}_k\phi_\beta({\lmfp})+\sum_{l=1}^ne_{jk}^l({\lmfp})\bar{X}_l\phi_\beta({\lmfp})+X_j\bar{X}_k\log|\phi_{\alpha\beta}|^2({\lmfp})+\sum_{l=1}^ne_{jk}^l({\lmfp})\bar{X}_l\log|\phi_{\alpha\beta}|^2({\lmfp})\\
		=&X_j\bar{X}_k\phi_\beta({\lmfp})+\sum_{l=1}^ne_{jk}^l({\lmfp})\bar{X}_l\phi_\beta({\lmfp})+\frac{X_j\bar{X}_k\phi_{\alpha\beta}({\lmfp})}{\phi_{\alpha\beta}({\lmfp})}+\sum_{l=1}^ne_{jk}^l({\lmfp})\frac{\bar{X}_l\phi_{\alpha\beta}({\lmfp})}{\phi_{\alpha\beta}({\lmfp})}\\
		=&X_j\bar{X}_k\phi_\beta({\lmfp})+\sum_{l=1}^ne_{jk}^l({\lmfp})\bar{X}_l\phi_\beta({\lmfp})+\frac{\big[X_j,\bar{X}_k\big]\phi_{\alpha\beta}({\lmfp})}{\phi_{\alpha\beta}({\lmfp})}+\sum_{l=1}^ne_{jk}^l({\lmfp})\frac{\bar{X}_l\phi_{\alpha\beta}({\lmfp})}{\phi_{\alpha\beta}({\lmfp})}\\
		=&X_j\bar{X}_k\phi_\beta({\lmfp})+\sum_{l=1}^ne_{jk}^l({\lmfp})\bar{X}_l\phi_\beta({\lmfp})+\frac{(d_{jk}^l(\lmfp)X_l-e_{jk}^l(\lmfp)\bar{X}_l)\phi_{\alpha\beta}({\lmfp})}{\phi_{\alpha\beta}({\lmfp})}+\sum_{l=1}^ne_{jk}^l({\lmfp})\frac{\bar{X}_l\phi_{\alpha\beta}({\lmfp})}{\phi_{\alpha\beta}({\lmfp})}\\
		=&X_j\bar{X}_k\phi_\beta({\lmfp})+\sum_{l=1}^ne_{jk}^l({\lmfp})\bar{X}_l\phi_\beta({\lmfp}),
	\end{align*}
	it turns out that the quadratic form $Q_{h_L,{\mfp}}$ is well-defined at ${\lmfp}\in\mathcal{K}_{h_L}$.
\end{proof}

Assume that $\mathcal{V}$ is a Levi flat structure. From the perspective of Morse theory, the quadratic form $Q_{h_L,\mfp}$ arises naturally. By the complex Frobenius theorem (see \cite{Nl57}), for any $\lmfp\in M$, there exists a coordinate chart
\begin{align*}
	(U;z,y,t)=(U;x_1+\sqrt{-1}x_{1+d},\cdots, x_{d}+\sqrt{-1}x_{2d},y_1,\cdots,y_{m-d},t_{1},\cdots,t_{n-d})
\end{align*}
centered at $\lmfp$, where $d:=m-{\rm rank}_\mathbb{C}(N^*\MV\cap\overline{N^*\MV})$, such that
\begin{align*}
	\MV|_{_U}\ \text{is spanned by}\ \left\{\frac{\partial}{\partial\bar{z}_\varrho}, \frac{\partial}{\partial t_\tau}\right\}_{\begin{subarray}{}1\leq\varrho\leq d\\1\leq\tau\leq n-d\end{subarray}}.
\end{align*}
In terms of this local frame of $\MV$, the quadratic form $Q_{h_L,\mfp}$ at $\lmfp\in\mathcal{K}_{h_L}\cap U_\alpha$ takes the form
\begin{align}\label{qoe.}
	\sum_{\varrho_{_1},\varrho_{_2}=1}^d\frac{\partial^2\phi_\alpha(\lmfp)}{\partial z_{\varrho_{_1}}\partial \bar{z}_{\varrho_{_2}}}\rd z_{\varrho_{_1}}\rd\bar{z}_{\varrho_{_2}}+2{\rm Re}\sum_{\varrho=1}^{d}\sum_{\tau=1}^{n-d}\frac{\partial^2\phi_\alpha(\lmfp)}{\partial z_\varrho\partial t_\tau}\rd z_\varrho\rd t_\tau+\sum_{\tau_{_1},\tau_{_2}=1}^{n-d}\frac{\partial^2\phi_\alpha(\lmfp)}{\partial t_{\tau_{_1}}\partial t_{\tau_{_2}}}\rd t_{\tau_{_1}}\rd t_{\tau_{_2}}.
\end{align}
If $\MV$ defines a Levi flat CR structure (i.e., $n=d$), then the restriction of $L$ to a Levi leaf $F$ (a complex submanifold of real codimension $m-d$) of $M$ is exactly a holomorphic line bundle. In this case, \eqref{qoe.} corresponds precisely to the curvature of the Chern connection of $h_{_L}|_{_F}$. On the other hand, if $\MV$ is an essentially real structure (i.e., $d\equiv0$), there exists a foliation $\{F_\alpha\}_\alpha$ of $M$ defined by $\MV$. For a real-valued function $\varphi$, we have
\begin{equation}\label{ers}
	\mathcal{K}_\varphi=\bigcup_{\alpha}\mathcal{C}_{\varphi_\alpha},\ \text{the form (\ref{qoe.}) for}\ \varphi\ \text{is the Hessian of}\ \varphi_\alpha\ \text{at any}\ \lmfp\in\mathcal{C}_{\varphi_\alpha},
\end{equation}
where $\varphi_\alpha:=\varphi|_{_{F_\alpha}}$ and $\mathcal{C}_{\varphi_\alpha}$ is the critical locus of $\varphi_\alpha$. Thus, the quadratic form $Q_{h_L,\mfp}$ connects the curvature form of the Chern connection in complex geometry with the Hessian of a function at its critical points in Morse theory.

We introduce the following concepts concerning the positivity of $Q_{h_L,{\mfp}}$.
\begin{definition}\label{qpqc}
	Let $\MV$ be a complex Frobenius structure, $L$ a basic line bundle over $(M,\MV)$ with a Hermitian metric $h_{_L}$, and $1\leq q\leq n$.
	\begin{enumerate}
		\item[$(i)$] The Hermitian line bundle $(L,h_{_L})$ is said to be $q$-positive with respect to $\MV$, provided that the quadratic form $Q_{h_L,{\mfp}}$ has at least $n-q+1$ positive eigenvalues on $\MV_{\mfp}$ at each point ${\lmfp}\in\mathcal{K}_{h_L}$.
		\item[$(ii)$] A real-valued function $\varphi\in C^2(M)$ is called $q$-convex with respect to $\MV$, if $Q_{\varphi,{\mfp}}$ has at least $\dim_\mathbb{C}(\MV_{\mfp}\cap{\rm Ker}(\rd\varphi))-q+1$ positive eigenvalues in the subspace $\MV_{\mfp}\cap{\rm Ker}(\rd\varphi)$ at each point ${\lmfp}\in\mathcal{K}_\varphi$.
	\end{enumerate}
\end{definition}

\begin{example}\label{qc.}
	$(i)$ If $\MV$ is a complex structure, then the minimum-maximum principle for the eigenvalues of the quadratic form $Q_{\varphi,\mfp}\ (=\sqrt{-1}\partial\bar{\partial}\varphi(\lmfp))$ ensures that any $q$-convex function in the sense of Grauert $($i.e., the form $\sqrt{-1}\partial\bar{\partial}\varphi(\lmfp)$ has at least $n-q+1$ positive eigenvalues at $\lmfp\in M$$)$ is $q$-convex in our sense.
	
	$(ii)$ When $\MV$ is essentially real, then $\MV$ yields a foliation $\{F_\alpha\}_\alpha$ on $M$. By $(\ref{ers})$ we know that if $\varphi_\alpha:=\varphi|_{_{F_\alpha}}$ is a Morse function for every $\alpha$, then $\varphi$ is $q$-convex with respect to $\MV$ if and only if each $\varphi_\alpha$ has index $\leq q-1$.
	
	$(iii)$ Let $M$ be an $m$-dimensional complex manifold, and $\mathscr{M}$ an $(m+n)$-dimensional smooth manifold $(n\geq m)$. Assume that there exists a proper submersion $\pi:\mathscr{M}\rightarrow M$, then $\mathscr{M}$ has a natural elliptic structure $\MV$ of rank $n$.
	\begin{enumerate}
		\item[($iii_a$)] If $M$ is $q$-complete in the sense of Grauert, then there is a smooth $(n-m+q)$-convex exhaustion function with respect to $\MV$ on $M$.
		\item[($iii_b$)] If there is a Hermitian line bundle $(L,h_{_L})$ over $M$ such that the curvature of the Chern connection on $(L,h_{_L})$ has at least $m-q+1$ positive eigenvalues on $M$ $(1\leq q\leq m)$, then the pull back $(\pi^*L,\pi^*h_{_L})$ of $(L,h_{_L})$ by $\pi$ over $\mathscr{M}$ is $(n-m+q)$-positive with respect to $\MV$.
	\end{enumerate}
	
	$(iv)$ Let $M$ be an $m$-dimensional complex manifold which is $q$-complete in the sense of Grauert, let $G$ be a Lie group, and $F$ an $(n-m)$-dimensional $G$-manifold. Suppose that
	\begin{align*}
		F\hookrightarrow\mathscr{M}\stackrel{\pi}{\longrightarrow}M
	\end{align*}
	is a fiber bundle  over $M$ with structure group $G$. If there exists a $G$-invariant Morse exhaustion function $\phi$ on $F$ with index $\leq p$, then $\mathscr{M}$ admits an elliptic structure $\MV$ of rank $n$ and a smooth $(p+q)$-convex exhaustion function with respect to $\MV$.
	
	Indeed, let $\{V_\alpha\}_\alpha$ be an open covering of $M$, then on nonempty intersections $V_\alpha\cap V_\beta$ we have
	\begin{align*}
		(\mfp,\mathfrak{f})\sim(\mfp,\mathfrak{g}_{\alpha\beta}(\mfp)\cdot \mathfrak{f}),
	\end{align*}
	where $(\mfp,\mathfrak{f})\in(V_\alpha\cap V_\beta)\times F$ and $\mathfrak{g}_{\alpha\beta}(\mfp)\in G$ acts smoothly on the fiber $F$. It follows that $\mathscr{M}$ has an elliptic structure $\MV$ of rank $n$, which is locally spanned by
	\begin{align*}
		\left\{\frac{\partial}{\partial\bar{z}_\varrho},\frac{\partial}{\partial t_\tau}\right\}_{\begin{subarray}{}1\leq \varrho\leq m\\1\leq \tau\leq n-m\end{subarray}},
	\end{align*}
	where $(z_1,\cdots,z_m)$ are local holomorphic coordinates on $M$ and $(t_1,\cdots,t_{n-m})$ are local smooth coordinates on $F$. Since $\phi$ is $G$-invariant, there is a function $\Phi\in C^\infty(\mathscr{M})$ such that for any small open subset $U\subseteq M$
	\begin{align*}
		\Phi|_{{\pi^{-1}(U)}}={\rm Pr}_{_U}^*\phi,
	\end{align*}
	where
	\begin{align*}
		{\rm Pr}_{_U}:\pi^{-1}(U)\stackrel{trivialization}{\longrightarrow}U\times F\stackrel{{\rm Pr}_{_2}}{\longrightarrow}F,
	\end{align*}
	here ${\rm Pr}_{_2}$ denotes the canonical projection map. Let $\psi\in C^\infty(M)$ be a smooth $q$-convex exhaustion function in the sense of Grauert, then
	\begin{align*}
		\varphi:=\Phi+\pi^*\psi
	\end{align*}
	is a smooth exhaustion function on $\mathscr{M}$. Moreover, $Q_{\varphi,\mfp}$ has at least $n-p-q+1$ positive eigenvalues on $\MV_\mfp$ at $\lmfp\in\mathcal{K}_\varphi$, i.e., $\varphi$ is $(p+q)$-convex with respect to $\MV$.
	
	In particular, we have the following special cases:
	\begin{enumerate}
		\item[($iv_a$)] Let $M$ be an $m$-dimensional complex manifold which is $q$-complete in the sense of Grauert, and let $G$ be a normal subgroup of the fundamental group $\pi_1(M)$. For any $(n-m)$-dimensional $(\pi_1(M)/G)$-manifold $F$, we consider the following flat bundle over $M$
		\begin{align*}
			F\hookrightarrow(\widetilde{M}/G)\times_{(\pi_1(M)/G)}F\rightarrow M,
		\end{align*}
		where $\widetilde{M}$ is the universal cover of $M$. If there exists a $(\pi_1(M)/G)$-invariant Morse exhaustion function on $F$ with index $\leq p$, then $(\widetilde{M}/G)\times_{(\pi_1(M)/G)}F$ has a smooth $(p+q)$-convex exhaustion function with respect to $\MV$.
		
		\item[($iv_b$)] Let $M\subseteq\mathbb{C}^{m}$ be a bounded homogeneous domain, then $M=G/K$, where $G$ is a Lie group and $K$ is a compact subgroup of $G$. Let $F$ be an $(n-m)$-dimensional $K$-manifold, then
		\begin{align}\label{bdl}
			F\hookrightarrow G\times_{K}F\rightarrow M
		\end{align}
		is a fiber bundle over $M$. According to Density Lemma 4.8 in \cite{Wag69}, there is a $K$-invariant Morse exhaustion function $\phi$ on $F$. As any bounded homogeneous domain is pseudoconvex, if the index of $\phi$ is at most $p$, then $G\times_{K}F$ possesses a smooth $(p+1)$-convex exhaustion function with respect to $\MV$.

		Specifically, let $F$ be an $(n-m)$-dimensional real vector space, for any linear representation $K\rightarrow GL(F)$, we choose some $K$-invariant inner product $\left<\cdot,\cdot\right>$ on $F$. The bundle $(\ref{bdl})$ is then a vector bundle over $M$, and $\phi(t):=|t|^2$ is a $K$-invariant convex function on $F$. Thus, $G\times_{K}F$ has a smooth $1$-convex exhaustion function with respect to $\MV$ $($i.e., $p=0$ and $q=1)$.
		
		\begin{table}[htbp]
			\centering
			\caption{Bounded Symmetric Domains}
			\vspace{-6pt}
			\begin{adjustbox}{width=\textwidth}
				\begin{tabular}{ccccc}
					\toprule
					Type & Group $G$ & Subgroup $K$ & Vector Space $F$ & Linear Action\\
					\midrule
					I & ${\rm SU}(p,q)$ & ${\rm S}({\rm U}(p)\times {\rm U}(q))$ & $M_{p\times q}(\mathbb{C})$ & $(U,V)\cdot Z=UZV^*$\\
					\midrule
					II & ${\rm SO}^*(2n)$ & ${\rm U}(n)$ & $\Lambda^2\mathbb{C}^n$ & $U\cdot Z=UZU^T$\\
					\midrule
					III & ${\rm Sp}(n,\mathbb{R})$ & ${\rm U}(n)$ & ${\rm Sym}^2(\mathbb{C}^n)$ & $U\cdot Z=UZU^T$\\
					\midrule
					IV & ${\rm SO}_0(n,2)$ & ${\rm SO}(n)\times{\rm SO}(2)$ & $\mathbb{C}^n$ & $(R,\theta)\cdot z=e^{i\theta}Rz$\\
					\midrule
					V & ${\rm E}_{6(-14)}$ & ${\rm Spin}(10)\times{\rm U}(1)$ & $\mathbb{C}^{16}$ & Spinor representation\\
					\midrule
					VI & ${\rm E}_{7(-25)}$ & ${\rm E}_6\times{\rm U}(1)$ & $\mathbb{C}^{27}$ & Fundamental representation of ${\rm E}_6$\\
					\bottomrule
				\end{tabular}
			\end{adjustbox}
		\end{table}
	\end{enumerate}
	
	\vspace{-6pt}
%
	$(v)$ One can construct $q$-convex exhaustion functions and $q$-positive basic line bundles with respect to Levi flat CR structures in a similar way.
\end{example}

\subsection{Setup for \texorpdfstring{$L^2$}{}-theory}
Let $\MV$ be a Levi flat structure of rank $n$ and corank $m$ on $M$, and let $E$ and $L$ be a basic vector bundle of rank $r$ and a basic line bundle over $(M,\MV)$ respectively. Given Hermitian metrics $\left<\cdot,\cdot\right>$ on $\mathbb{C}TM$ (whose real part induces a Riemannian metric on $M$), $h_{_E}$ on $E$ and $h_{_L}$ on $L$.

For $ 1\leq q\leq n $, we denote by $ L^2_{loc}(M,\Lambda_\MV^{m,q}(E)) $ the space of all $E$-valued $ (m,q) $-forms with locally $ L^2 $-integrable coefficients on $M$. We introduce the following Hilbert space $L^2_\phi(M,\Lambda_\MV^{m,q}(E))$ for a real-valued Borel function $\phi$ on $M$ which is locally bounded from below
\begin{align*}
	L^2_\phi(M,\Lambda_\MV^{m,q}(E))=\bigg\{u\in L^2_{loc}(M,\Lambda_\MV^{m,q}(E))\ \big|\ \int_M |u|_{h_{_E}}^2e^{-\phi}<+\infty\bigg\},
\end{align*}
here and hereafter we denote by $|u|_{h_{_E}}$ the pointwise norm of $u$ with respect to the metric $\left<\cdot,\cdot\right>\cdot h_{_E}$ for simplicity. Let $(\cdot,\cdot)_{\phi}$ denote the inner product on $ L^2_\phi(M,\Lambda_\MV^{m,q}(E)) $, and in slight abuse of notation we still denote  by $ \dmv  $ the maximal extension of $\dmv $ from $ L^2_\phi(M,\Lambda_\MV^{m,q}(E)) $ to $ L^2_\phi(M,\Lambda_\MV^{m,q+1}(E)) $, whose Hilbert adjoint and formal adjoint are denoted by $\rd_{\MV,\vartheta}^{\phi*}$ and $ \delta_{\MV,\vartheta}^\phi $ respectively.

\begin{lemma}[\cite{H3}]\label{functional analysis}
	Let $ T:H_1\rightarrow H_2 $ and $ S:H_2\rightarrow H_3 $ be closed, densely defined linear operators such that $ {\rm Im}(T)\subseteq {\rm Ker}(S) $. If there exists a constant $C>0$ such that
	\begin{equation}\label{priori estimate}
		\|g\|_{H_2}^2\leq C^2(\|T^*g\|_{H_1}^2+\|Sg\|_{H_3}^2),\quad  g\in {\rm Dom}(T^*)\cap{\rm Dom}(S).
	\end{equation}
	Then for any $f\in{\rm Ker}(S)$, one can find $u\in H_1$ such that $Tu=f$ and $\|u\|_{H_1}\leq C\|f\|_{H_2}$.
\end{lemma}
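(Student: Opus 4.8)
The plan is to reduce the solvability of $Tu=f$ to the construction of a single bounded functional, following the classical Hörmander duality argument. Since $T$ is closed and densely defined, it satisfies $T^{**}=T$, so finding $u\in{\rm Dom}(T)$ with $Tu=f$ is equivalent, by the very definition of the adjoint $(T^*)^*=T$, to producing $u\in H_1$ with
\[
	(u,T^*g)_{H_1}=(f,g)_{H_2},\qquad g\in{\rm Dom}(T^*).
\]
Thus the entire problem becomes: show that the assignment $T^*g\mapsto(f,g)_{H_2}$ extends to a bounded (conjugate-)linear functional on all of $H_1$. The Riesz representation theorem then delivers $u$, whose norm is automatically controlled by $C\|f\|$.

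First I would establish the a priori bound
\[
	|(f,g)_{H_2}|\leq C\|f\|_{H_2}\,\|T^*g\|_{H_1},\qquad g\in{\rm Dom}(T^*),
\]
which is exactly where the hypotheses ${\rm Im}(T)\subseteq{\rm Ker}(S)$ and $f\in{\rm Ker}(S)$ enter. The device is the orthogonal decomposition $g=g_1+g_2$ with $g_1\in{\rm Ker}(S)$ and $g_2\in{\rm Ker}(S)^{\perp}$; this is legitimate because $S$ is closed, so ${\rm Ker}(S)$ is a closed subspace. The inclusion ${\rm Im}(T)\subseteq{\rm Ker}(S)$ gives ${\rm Ker}(S)^{\perp}\subseteq{\rm Im}(T)^{\perp}={\rm Ker}(T^*)$, hence $g_2\in{\rm Ker}(T^*)\subseteq{\rm Dom}(T^*)$. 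Consequently $g_1=g-g_2\in{\rm Dom}(T^*)$ with $T^*g_1=T^*g$, while $g_1\in{\rm Ker}(S)\subseteq{\rm Dom}(S)$ with $Sg_1=0$; in particular $g_1\in{\rm Dom}(T^*)\cap{\rm Dom}(S)$, so (\ref{priori estimate}) applies to $g_1$ and yields $\|g_1\|^2\leq C^2\|T^*g_1\|^2=C^2\|T^*g\|^2$. Finally $f\in{\rm Ker}(S)$ and $g_2\in{\rm Ker}(S)^{\perp}$ force $(f,g_2)=0$, so $(f,g)=(f,g_1)$, and Cauchy--Schwarz gives the claimed bound.

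With the bound in hand, I would define the functional $\lambda$ on the subspace ${\rm Im}(T^*)\subseteq H_1$ by $\lambda(T^*g)=(f,g)_{H_2}$. The estimate simultaneously proves well-definedness (if $T^*g=T^*g'$ then $|(f,g-g')|\leq C\|f\|\,\|T^*(g-g')\|=0$) and continuity (with norm $\leq C\|f\|$). Hahn--Banach extends $\lambda$ to a bounded functional on $H_1$ of the same norm, and the Riesz representation theorem produces $u\in H_1$ with $(u,T^*g)=(f,g)$ for all $g\in{\rm Dom}(T^*)$ and $\|u\|\leq C\|f\|$. By the adjoint characterization recalled in the first step, this says precisely $u\in{\rm Dom}(T)$ and $Tu=f$.

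I expect the only genuinely delicate point to be the domain bookkeeping in the second step: one must verify that the component $g_1$ actually lands in ${\rm Dom}(T^*)\cap{\rm Dom}(S)$ so that the hypothesis (\ref{priori estimate}) becomes applicable, and this hinges precisely on reading ${\rm Im}(T)\subseteq{\rm Ker}(S)$ through orthogonal complements to place $g_2$ in ${\rm Ker}(T^*)$. Everything else is the standard Hahn--Banach/Riesz packaging, and the closedness of $S$ (to make ${\rm Ker}(S)$ closed) together with the closedness of $T$ (to identify $T^{**}=T$) are used only to keep these two reductions rigorous.
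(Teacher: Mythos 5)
Your proof is correct and is precisely the classical Hörmander duality argument that the paper invokes by citing \cite{H3} without reproducing it: the orthogonal splitting $g=g_1+g_2$ along $\mathrm{Ker}(S)$, the identification $\mathrm{Ker}(S)^{\perp}\subseteq\mathrm{Im}(T)^{\perp}=\mathrm{Ker}(T^*)$ to keep $g_1$ in $\mathrm{Dom}(T^*)\cap\mathrm{Dom}(S)$, and the Hahn--Banach/Riesz step are all exactly as in the standard proof. The domain bookkeeping you flag as delicate is handled correctly, since any $h\perp\mathrm{Im}(T)$ automatically lies in $\mathrm{Dom}(T^*)$ with $T^*h=0$.
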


We will apply Lemma \ref{functional analysis} to the following Hilbert spaces
\begin{equation}\label{Hil.}
	H_1=L^2_\phi(M,\Lambda_\MV^{m,q-1}(E)), \ H_2=L^2_\phi(M,\Lambda_\MV^{m,q}(E)), \ H_3 =L^2_\phi(M,\Lambda_\MV^{m,q+1}(E)),
\end{equation}
and operators
\begin{equation}\label{Ope.}
	T,S=\text{the maximal extensions of}\ \dmv,
\end{equation}
where $\phi$ will be determined in the sequel. 

We fix a sequence $\lbrace\eta_\nu\rbrace_{\nu=1}^{\infty}$ of real-valued functions in $C_c^\infty(M)$ such that $ 0\leq\eta_\nu\leq1 $ for all $ \nu $, and $ \eta_\nu=1 $ on any compact subset of $ M $ when $ \nu $ is large. There exists a Hermitian metric on $\mathbb{C}TM$ (e.g., obtained by multiplying a given Hermitian metric on $\mathbb{C}TM$ by a sufficiently large positive smooth function) such that
\begin{equation}\label{cme}
	|\rd \eta_\nu|^2\leq1\ \text{on}\ M,\ \nu=1,2,\cdots.
\end{equation}
By the same argument of Lemma 5.2.1 in \cite{Hlv90}, we obtain
\begin{lemma}\label{dl.}
	The space $ C^\infty_c(M,\Lambda_\MV^{m,q}(E))$ is dense in $ {\rm Dom}(T^*)\cap{\rm Dom}(S) $ with respect to the graph norm
	\begin{equation*}
		u\rightarrow\|u\|_{H_2}+\|T^*u\|_{H_1}+\|Su\|_{H_3},
	\end{equation*} 
	where $H_1,H_2,H_3$ and $S,T$ are defined by $ (\ref{Hil.}) $ and $ (\ref{Ope.}) $ respectively.
\end{lemma}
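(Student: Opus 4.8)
The plan is to adapt the classical Friedrichs mollifier / cut-off density argument of Hörmander (Lemma 5.2.1 in \cite{Hlv90}) to the present setting. The proof proceeds in two independent reductions. First I would reduce to \emph{compact support}: given $u\in{\rm Dom}(T^*)\cap{\rm Dom}(S)$, I multiply by the cut-off functions $\eta_\nu$ from (\ref{cme}) and show that $\eta_\nu u\to u$ in the graph norm. The only point requiring care is the behaviour of the first-order operators $T^*=\rd_{\MV,\vartheta}^{\phi*}$ and $S=\dmv$ under multiplication by $\eta_\nu$: by the Leibniz rule, $S(\eta_\nu u)=\eta_\nu Su+(\rd\eta_\nu)\wedge u$ (up to the zeroth-order contributions of $\Xi_\MV$ and $\vartheta\wedge$, which are harmless since they commute with multiplication by $\eta_\nu$), and dually $T^*(\eta_\nu u)=\eta_\nu T^*u-\iota_{(\rd\eta_\nu)^\sharp}u$ in the sense of distributions. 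Since $|\rd\eta_\nu|\le 1$ by (\ref{cme}) and $\rd\eta_\nu$ is supported where $\eta_\nu$ is not yet identically $1$, dominated convergence forces the error terms $(\rd\eta_\nu)\wedge u$ and $\iota_{(\rd\eta_\nu)^\sharp}u$ to $0$ in $L^2_\phi$, while $\eta_\nu Su\to Su$ and $\eta_\nu T^*u\to T^*u$. This settles density of \emph{compactly supported} elements of ${\rm Dom}(T^*)\cap{\rm Dom}(S)$.

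Second I would reduce from compactly supported $L^2$-elements to genuinely smooth compactly supported forms by \textbf{regularization}. For a compactly supported $u$ I convolve with a standard mollifier in each coordinate chart, patch via a partition of unity subordinate to a trivializing cover $\{U_\alpha\}$ of $E$, and pass to the limit. The crucial input is Friedrichs' commutator lemma: if $P$ is a first-order differential operator with smooth coefficients and $\rho_\varepsilon$ denotes mollification, then the commutator $[P,\rho_\varepsilon]$ is bounded on $L^2$ uniformly in $\varepsilon$ and tends to $0$ strongly as $\varepsilon\to0$. Applying this to the local expressions of $S$ and $T^*$ — whose top-order parts are $\rd$ and its formal adjoint, with smooth lower-order coefficients coming from $\Xi_\MV$, $\vartheta$, the metrics, and the weight $e^{-\phi}$ (recall the local formula (\ref{lefaf}) and the componentwise action of $\rd-\Xi_\MV-\vartheta\wedge$) — yields $S\rho_\varepsilon u\to Su$ and $T^*\rho_\varepsilon u\to T^*u$ in $L^2_\phi$, while $\rho_\varepsilon u\to u$. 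Composing the two reductions gives density of $C^\infty_c(M,\Lambda_\MV^{m,q}(E))$ in the graph norm.

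The main obstacle is purely bookkeeping rather than conceptual: one must verify that the adjoint $T^*$, a priori defined only as the Hilbert-space adjoint of the maximal extension of $\dmv$ between the weighted spaces (\ref{Hil.}), really does act on smooth compactly supported forms as the \emph{formal} adjoint $\delta_{\MV,\vartheta}^\phi$, so that the Leibniz and Friedrichs computations above — carried out for the explicit differential operator $\delta_{\MV,\vartheta}^\phi$ — legitimately describe $T^*u$. This identification, together with the fact that mollification does not respect the global patching unless one works chart-by-chart and controls the transition terms $\rd\tilde f_\alpha^{-1}\cdot\tilde f_\alpha$ appearing in (\ref{lefaf}), is where all the technical attention is needed; once it is in place the convergence statements follow verbatim from the estimates $|\rd\eta_\nu|\le1$ and Friedrichs' lemma. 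Since \cite{Hlv90} carries out precisely this two-step argument in the $\bar\partial$-setting and our operators differ from $\bar\partial$ only by the zeroth-order terms $\Xi_\MV$ and $\vartheta\wedge$ and by smooth first-order lower-order terms, I would simply invoke that argument, indicating the (routine) modifications, exactly as the statement announces.
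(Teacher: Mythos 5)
Your two-step argument (cut-off via the $\eta_\nu$ with $|\rd\eta_\nu|\le 1$, then Friedrichs mollification chart-by-chart) is exactly the argument of H\"ormander's Lemma 5.2.1, which is all the paper itself invokes for this lemma. The proposal is correct and coincides with the paper's (cited) proof, including the routine identification of $T^*$ with the formal adjoint on smooth compactly supported forms.
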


To conclude this section, we establish a Bochner-type formula and its variant by making another integration by parts. For any open set $U\subseteq M$, we adopt the same notations (somewhat abusively, for the sake of simplicity) to denote the orthonormalization of the frames (\ref{lf.}) of $\mathbb{C}TM$ and (\ref{df.}) of $\mathbb{C}T^*M$ over $U$ with respect to a Hermitian metric $\left<\cdot,\cdot\right>$ on $\mathbb{C}TM$. We fix a smooth local orthonormal frame $ e_{_U}:=(e_{_U}^1,\cdots,e_{_U}^r)$ of $E$ with respect to $h_{_E}$ and a smooth local frame $\sigma_{_U}$ of $L$. For 
\begin{align*}
	g=g_{_U}\otimes\sigma_{_U}\otimes  e_{_U}=\sum_{a=1}^{r}\sum_{|J|=q}g_{J}^a\Theta_{_U}\wedge\omega^J\otimes \sigma_{_U}\otimes e_{_U}^a\in C^\infty(U,\Lambda_\MV^{m,q}(L\otimes E)),
\end{align*}
here and hereafter, sums over multi-indices range over all strictly increasing multi-indices in $\{1,\cdots,n\}$ and $\Theta_{_U}:=\theta^1\wedge\cdots\wedge\theta^m$, we know from (\ref{lefaf}) that
\begin{align}\label{ed}
	\dmv g&=(\rd-\Xi_\MV-\vartheta\wedge)g_{_U}\otimes\sigma_{_U}\otimes e_{_U}+\cdots\nonumber\\
	&=\sum_{a=1}^{r}\sum_{|J|=q}\sum_{j=1}^n(-1)^mX_j(g_{J}^a)\Theta_{_U}\wedge\omega^j\wedge\omega^J\otimes\sigma_{_U}\otimes e_{_U}^a+\cdots,
\end{align}
where the dots indicate terms of order zero. Let $ \delta_{\MV,\vartheta} $ be the formal adjoint operator of $ \dmv $ with respect to the $L^2$-inner product $(\cdot,\cdot)$, we have
\begin{align}\label{ea}
	\delta_{\MV,\vartheta} g=\sum_{a=1}^{r}\sum_{|K|=q-1}\sum_{j=1}^n(-1)^m(X_j)_{\phi_{_U}}^*(g_{jK}^a)\Theta_{_U}\wedge\omega^K\otimes \sigma_{_U}\otimes e_{_U}^a+\cdots,
\end{align}
where $\phi_{_U}$ is given by $e^{-\phi_{_U}}=h_{_L}(\sigma_{_U},\sigma_{_U})$,
\begin{align*}
	(X_j)_{\phi_{_U}}^*=-\bar{X}_j+\bar{X}_j(\phi_{_U}),
\end{align*}
and the dots indicate terms in which no $g_{jK}^a$ is differentiated and do not involve $\phi_{_U}$.

\begin{prop}
	Given the above local orthonormal frames $\{X_j\}_{j=1}^n$ of $\MV$ and $ e_{_U}$ of $E$, as well as the functions $e_{jk}^l$ in $(\ref{cbl})$ on $U$. Let $0\leq q\leq n$, for $g\in  C^\infty_c(U,\Lambda_\MV^{m,q}(L\otimes E))$,
	\begin{align}\label{be.}
		\|\dmv g\|^2+\|\delta_{\MV,\vartheta} g\|^2=&{\rm Re}\sum_{a=1}^{r}\sum_{|K|=q-1}\sum_{j,k=1}^{n}\int_{U}\bigg<\bigg(X_j\bar{X}_k\phi_{_U}+\sum_{l=1}^ne_{jk}^l\bar{X}_l\phi_{_U}\bigg)g_{kK}^a,g_{jK}^a\bigg>e^{-\phi_{_U}}\nonumber\\
		&+\sum_{a=1}^{r}\sum_{|J|=q}\sum_{j=1}^{n}\int_{U}|X_jg_{J}^a|^2e^{-\phi_{_U}}+O_{_U}(G_{_U}(g)\|g\|),
	\end{align}
	where $G_{_U}(g)^2:=\sum_{a=1}^{r}\sum_{|J|=q}\sum_{j=1}^{n}\int_{U}|X_jg_{J}^a|^2e^{-\phi_{_U}}+\|g\|^2$ and $O_{_U}(G_{_U}(g)\|g\|)$ denotes a term whose modulus is bounded by $G_{_U}(g)\|C_{_U}g\|$, in which $C_{_U}\in C^0(U)$ is independent of $g,\phi_{_U}$.
\end{prop}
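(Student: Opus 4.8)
The plan is to derive (\ref{be.}) by the Bochner--Kodaira--Hörmander method: expand the two norms using the local expressions (\ref{ed}) and (\ref{ea}), and then integrate by parts to trade the resulting cross terms for a commutator that produces the curvature form. Throughout I would work in the fixed orthonormal frames, abbreviate $\delta_j:=(X_j)_{\phi_{_U}}^*=-\bar X_j+\bar X_j(\phi_{_U})$, and use that $g\in C^\infty_c(U,\Lambda_\MV^{m,q}(L\otimes E))$ so that no boundary terms arise. The guiding principle is that every first-order quantity that appears is either one of the good derivatives $X_jg_J^a$ (which is controlled by $G_{_U}(g)$) or can be converted into one by a single integration by parts, at the cost of zeroth-order terms whose coefficients depend only on the frame and on the structure functions $d_{jk}^l,e_{jk}^l$ --- never on $\phi_{_U}$.

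First I would compute $\|\dmv g\|^2$. Squaring the leading part of (\ref{ed}) and using the orthonormality of $\{\omega^j\}$ and $\Theta_{_U}$, the inner products $\langle\omega^j\wedge\omega^J,\omega^k\wedge\omega^{J'}\rangle$ single out a diagonal contribution $\sum_a\sum_{|J|=q}\sum_{j\notin J}\int_U|X_jg_J^a|^2e^{-\phi_{_U}}$ together with off-diagonal cross terms of the form $-\,\mathrm{Re}\sum_a\sum_{|K|=q-1}\sum_{j,k}\int_U X_jg_{kK}^a\,\overline{X_kg_{jK}^a}\,e^{-\phi_{_U}}$. Squaring the leading part of (\ref{ea}) gives $\sum_a\sum_{|K|=q-1}\sum_{j,k}\int_U \delta_jg_{jK}^a\,\overline{\delta_kg_{kK}^a}\,e^{-\phi_{_U}}$. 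All contributions coming from the order-zero tails (the $\cdots$ in (\ref{ed}) and (\ref{ea})), and all of their cross products with the leading terms, are --- after at most one integration by parts --- bounded by $G_{_U}(g)\|g\|$ with a $\phi_{_U}$-independent coefficient, hence absorbed into $O_{_U}(G_{_U}(g)\|g\|)$.

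The core of the argument is to combine the off-diagonal cross term with $\|\delta_{\MV,\vartheta}g\|^2$. Integrating by parts (moving $\delta_k$, whose adjoint is $X_k$, and $X_k$, whose adjoint is $\delta_k$) and relabeling summation indices converts their sum into $\mathrm{Re}\sum_a\sum_{|K|=q-1}\sum_{j,k}\int_U\langle[X_j,\delta_k]g_{kK}^a,g_{jK}^a\rangle e^{-\phi_{_U}}$, plus the diagonal directions $j\in J$ that complete the good term to the full sum $\sum_{j=1}^n$ in (\ref{be.}). I would then evaluate the commutator by means of (\ref{cbl}) and the elementary identity $\bar X_l=\bar X_l(\phi_{_U})-\delta_l$:
\begin{align*}
	[X_j,\delta_k]&=X_j\bar X_k(\phi_{_U})-[X_j,\bar X_k]\\
	&=\Big(X_j\bar X_k\phi_{_U}+\sum_{l=1}^n e_{jk}^l\bar X_l\phi_{_U}\Big)-\sum_{l=1}^n d_{jk}^l X_l-\sum_{l=1}^n e_{jk}^l\delta_l.
\end{align*}
The parenthesized operator is precisely the coefficient in the curvature term of (\ref{be.}). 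The term $\sum_l d_{jk}^l X_lg_{kK}^a$ is a good derivative, directly controlled by $G_{_U}(g)$; the term $\sum_l e_{jk}^l\delta_lg_{kK}^a$, after one further integration by parts (the adjoint of $\delta_l$ being $X_l$), likewise reduces to a good derivative together with a zeroth-order term. Crucially, the decomposition $\bar X_l=\bar X_l(\phi_{_U})-\delta_l$ is what sends the only first-order-in-$\phi_{_U}$ contribution into the stated Hessian term and leaves every remaining piece estimable by $G_{_U}(g)\|g\|$ with constants independent of $\phi_{_U}$. Collecting the good diagonal term, the curvature term, and the errors then yields (\ref{be.}).

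The main obstacle is bookkeeping rather than any single hard estimate. One must track the signs produced by the wedge-product inner products $\langle\omega^j\wedge\omega^J,\omega^k\wedge\omega^{J'}\rangle$ and verify that, after the integration by parts and the relabeling, the diagonal contributions $j\in J$ are supplied exactly so that the good term acquires the unrestricted sum $\sum_{j=1}^n$ as written. Equally, one must confirm that neither the order-zero tails nor the derivatives of the structure functions $d_{jk}^l,e_{jk}^l$ and of the frame introduce any dependence of $C_{_U}$ on $\phi_{_U}$; the key observation here is that every such term pairs an \emph{undifferentiated} factor of $g$ (bounded by $\|g\|$) against a good derivative (bounded by $G_{_U}(g)$), which is exactly what guarantees the form $O_{_U}(G_{_U}(g)\|g\|)$.
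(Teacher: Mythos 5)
Your proposal is correct and follows essentially the same route as the paper: expand $\|\dmv g\|^2$ and $\|\delta_{\MV,\vartheta}g\|^2$ from the leading terms of (\ref{ed}) and (\ref{ea}), integrate by parts to cancel the off-diagonal cross terms and produce $\mathrm{Re}\sum\int\langle[X_j,(X_k)_{\phi_{_U}}^*]g_{kK}^a,g_{jK}^a\rangle e^{-\phi_{_U}}$, then evaluate the commutator via (\ref{cbl}) so that the decomposition $\bar X_l=\bar X_l(\phi_{_U})-(X_l)_{\phi_{_U}}^*$ isolates the stated curvature coefficient while the $d_{jk}^l X_l$ and $e_{jk}^l(X_l)_{\phi_{_U}}^*$ pieces are absorbed into $O_{_U}(G_{_U}(g)\|g\|)$ with $\phi_{_U}$-independent constants. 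This matches the paper's computation $(\ref{ed1})$--$(\ref{ed5})$ step for step.
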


\begin{proof}
	Consider $g=\sum_{a=1}^{r}\sum_{|J|=q}g_{J}^a\omega^J\wedge\Theta_{_U}\otimes \sigma_{_U}\otimes e_{_U}^a\in  C^\infty_c(U,\Lambda_\MV^{m,q}(L\otimes E))$, by (\ref{ed}) we have
	\begin{align}\label{ed1}
		(\dmv g,\dmv g)=&\sum_{a=1}^{r}\sum_{|I|=q,|J|=q}\sum_{i,j=1}^n\int_U{\rm sgn}\binom{iI}{jJ}\left<X_ig_{I}^a,X_jg_{J}^a\right>e^{-\phi_{_U}}+O_{_U}(G_{_U}(g)\|g\|)\nonumber\\
		=&\sum_{a=1}^{r}\sum_{|J|=q}\sum_{j=1}^{n}\int_{U}|X_jg_{J}^a|^2e^{-\phi_{_U}}-\sum_{a=1}^{r}\sum_{|K|=q-1}\sum_{j,k=1}^{n}\int_{U}\left<X_j g_{kK}^a,X_k g_{jK}^a\right>e^{-\phi_{_U}}\nonumber\\
		&+O_{_U}(G_{_U}(g)\|g\|),
	\end{align}
	where ${\rm sgn}\binom{iI}{jJ}=0$ unless $i\notin I, j\notin J$ and $\{i\}\cup I=\{j\}\cup J$, in which case ${\rm sgn}\binom{iI}{jJ}$ is the sign of the
	permutation $\binom{iI}{jJ}$. On the other hand, due to (\ref{ea})
	\begin{align}\label{ed2}
		(\delta_{\MV,\vartheta} g,\delta_{\MV,\vartheta} g)=\sum_{a=1}^{r}\sum_{|K|=q-1}\sum_{j,k=1}^{n}\int_{U}\left<(X_k)_{\phi_{_U}}^*g_{kK}^a,(X_j)_{\phi_{_U}}^*g_{jK}^a\right>e^{-\phi_{_U}}+O_{_U}(G_{_U}(g)\|g\|).
	\end{align}
	For the first term in R.H.S. of (\ref{ed2}),
	\begin{align}\label{ed3}
		&\int_{U}\left<(X_k)_{\phi_{_U}}^*g_{kK}^a,(X_j)_{\phi_{_U}}^*g_{jK}^a\right>e^{-\phi_{_U}}\nonumber\\
		=&\int_{U}\left<X_j(X_k)_{\phi_{_U}}^*g_{kK}^a,g_{jK}^a\right>e^{-\phi_{_U}}+O_{_U}(G_{_U}(g)\|g\|)\nonumber\\
		=&\int_{U}\left<X_jg_{kK}^a,X_kg_{jK}^a\right>e^{-\phi_{_U}}+\int_{U}\left<\left[X_j,(X_k)_{\phi_{_U}}^*\right]g_{kK}^a,g_{jK}^a\right>e^{-\phi_{_U}}+O_{_U}(G_{_U}(g)\|g\|).
	\end{align}
	
	Combining $(\ref{ed1})\sim(\ref{ed3})$ implies
	\begin{align}\label{ed4}
		\|\dmv g\|^2+\|\delta_{\MV,\vartheta} g\|^2=&\sum_{a=1}^{r}\sum_{|J|=q}\sum_{j=1}^{n}\int_{U}|X_jg_{J}^a|^2e^{-\phi_{_U}}+O_{_U}(G_{_U}(g)\|g\|)\nonumber\\
		&+{\rm Re}\sum_{a=1}^{r}\sum_{|K|=q-1}\sum_{j,k=1}^{n}\int_{U}\left<\left[X_j,(X_k)_{\phi_{_U}}^*\right]g_{kK}^a,g_{jK}^a\right>e^{-\phi_{_U}}.
	\end{align}
	It follows from (\ref{cbl}) that
	\begin{align*}
		\left[X_j,(X_k)_{\phi_{_U}}^*\right]&=-\big[X_j,\bar{X}_k\big]+X_j\bar{X}_k(\phi_{_U})\\
		&=-\sum_{l=1}^nd_{jk}^lX_l-\sum_{l=1}^ne_{jk}^l(X_l)_{\phi_{_U}}^*+\sum_{l=1}^ne_{jk}^l\bar{X}_l(\phi_{_U})+X_j\bar{X}_k(\phi_{_U}).
	\end{align*}
	Thus,
	\begin{align}\label{ed5}
		\int_{U}\left<\left[X_j,(X_k)_{\phi_{_U}}^*\right]g_{kK}^a,g_{jK}^a\right>e^{-\phi_{_U}}=&\int_{U}\bigg<\bigg(X_j\bar{X}_k(\phi_{_U})+\sum_{l=1}^ne_{jk}^l\bar{X}_l(\phi_{_U})\bigg)g_{kK}^a,g_{jK}^a\bigg>e^{-\phi_{_U}}\nonumber\\
		&+O_{_U}(G_{_U}(g)\|g\|).
	\end{align}
	The desired formula (\ref{be.}) is then obtained by $(\ref{ed4})$ and $(\ref{ed5})$.
\end{proof}

We can exploit the negativity of $Q_{\varphi,\mfp}$ by applying an additional integration by parts (\cite{H3}, \cite{Kjj79}). Recently, Siu employed this approach (page 10 in \cite{Syt22}), in a very different context, to establish a Thullen-type extension theorem for holomorphic vector bundles.
\begin{prop}\label{ibp}
	Given the above orthonormal frame $\{X_1,\cdots,X_n\}$ of $\MV$ and the functions $e_{jk}^l$ in $(\ref{cbl})$ on $U$. Then for any function $w\in C_c^\infty(U)$ and $1\leq j\leq n$ we have
	\begin{align}
		\int_U|X_jw|^2e^{-\phi_{_U}}\geq-\int_U{\rm Re}\bigg(X_j\bar{X}_j\phi_{_U}+\sum_{l=1}^ne_{jj}^l\bar{X}_l\phi_{_U}\bigg)|w|^2e^{-\phi_{_U}}\nonumber+O_{_U}(G_{_U}(w)\|w\|),
	\end{align}
	where $\|w\|=\int_U|w|^2e^{-\phi_{_U}}$, $G_{_U}(w)^2=\sum_{k=1}^{n}\int_{U}|X_kw|^2e^{-\phi_{_U}}+\|w\|^2$ and $O_{_U}(G_{_U}(w)\|w\|)$ denotes a term whose modulus is bounded by $G_{_U}(w)\|C_{_U}w\|$ for some $C_{_U}\in C^0(U)$ independent of $w$ and $\phi_{_U}$.
\end{prop}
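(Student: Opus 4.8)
The plan is to run the classical ``reverse'' integration by parts that exploits the nonnegativity of an adjoint term. I would start from the weighted identity $\int_U|X_jw|^2e^{-\phi_{_U}}=\big((X_j)_{\phi_{_U}}^*X_jw,w\big)$, which is just the definition of the formal adjoint $(X_j)_{\phi_{_U}}^*=-\bar{X}_j+\bar{X}_j(\phi_{_U})$ recorded after $(\ref{ea})$ (up to a zeroth-order term independent of $\phi_{_U}$, which will only feed into the error). I then commute the two operators, $(X_j)_{\phi_{_U}}^*X_j=X_j(X_j)_{\phi_{_U}}^*+\big[(X_j)_{\phi_{_U}}^*,X_j\big]$, and observe that $\big(X_j(X_j)_{\phi_{_U}}^*w,w\big)=\|(X_j)_{\phi_{_U}}^*w\|^2\geq0$. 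Discarding this nonnegative term turns the equality into the desired lower bound by the commutator term alone; this is precisely the ``additional integration by parts'' that converts positivity of the adjoint into the sought-for sign.

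Next I would compute the commutator. Specializing the calculation in the preceding Bochner proof to $k=j$ and invoking $(\ref{cbl})$ gives $\big[(X_j)_{\phi_{_U}}^*,X_j\big]=\sum_{l=1}^nd_{jj}^lX_l-\sum_{l=1}^ne_{jj}^l\bar{X}_l-X_j\bar{X}_j\phi_{_U}$ modulo zeroth-order terms independent of $\phi_{_U}$. Pairing against $w$ in the weighted inner product, the term $-X_j\bar{X}_j\phi_{_U}$ is one of the two curvature-type main terms on the right-hand side, and $\sum_l d_{jj}^lX_l$ contributes $\sum_l\int_U d_{jj}^l(X_lw)\bar{w}\,e^{-\phi_{_U}}$, which is $O_{_U}(G_{_U}(w)\|w\|)$ by Cauchy--Schwarz since $\|X_lw\|\leq G_{_U}(w)$ and $d_{jj}^l$ is independent of $\phi_{_U}$.

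The main obstacle is the remaining term $-\sum_l\int_U e_{jj}^l(\bar{X}_lw)\bar{w}\,e^{-\phi_{_U}}$: it carries the antiholomorphic derivative $\bar{X}_lw$, whose $L^2$ norm is \emph{not} controlled by $G_{_U}(w)$, so it can neither be absorbed into the error nor obviously matched to the second main term $-\sum_l e_{jj}^l\bar{X}_l\phi_{_U}$. I resolve this by a second integration by parts, moving $\bar{X}_l$ off $w$ onto the factors $\bar w$, $e^{-\phi_{_U}}$, the coefficient $e_{jj}^l$ and the volume density. The derivative landing on the weight reproduces exactly $-\sum_l\int_U e_{jj}^l(\bar{X}_l\phi_{_U})|w|^2e^{-\phi_{_U}}$, the second main term; the derivative landing on $\bar w$ yields $\sum_l\int_U e_{jj}^l w\,\overline{X_lw}\,e^{-\phi_{_U}}$ (using $\bar{X}_l\bar{w}=\overline{X_lw}$), which is $O_{_U}(G_{_U}(w)\|w\|)$; and the derivatives of $e_{jj}^l$ and of the density are zeroth order, hence further $O_{_U}$ contributions with constants depending only on the frame, the structure functions, and the metric.

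Finally I would collect the terms and take real parts. Since the left-hand side is real and every error constant $C_{_U}$ is independent of $\phi_{_U}$, the estimate assumes the stated form. The only delicate point is the bookkeeping: one must check that each term generated by the two integrations by parts is either one of the two main terms, manifestly nonnegative (and so discardable), or of the form $O_{_U}(G_{_U}(w)\|w\|)$ with $C_{_U}$ independent of $\phi_{_U}$; the antiholomorphic derivative $\bar{X}_lw$ in the $e_{jj}^l$-term is the sole genuinely dangerous contribution, and it is exactly the second integration by parts that trades it for the admissible pairing $w\,\overline{X_lw}$.
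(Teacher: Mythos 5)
Your argument is correct and is essentially the paper's proof: both start from $\int_U|X_jw|^2e^{-\phi_{_U}}=\big((X_j)_{\phi_{_U}}^*X_jw,w\big)+O_{_U}(G_{_U}(w)\|w\|)$, commute to expose the discardable nonnegative term $\|(X_j)_{\phi_{_U}}^*w\|^2$, and then extract the two curvature terms from the commutator while absorbing the rest into $O_{_U}(G_{_U}(w)\|w\|)$. The only cosmetic difference is that the paper handles the dangerous $e_{jj}^l\bar{X}_l$ contribution by substituting $\bar{X}_l=-(X_l)_{\phi_{_U}}^*+\bar{X}_l(\phi_{_U})$ inside the commutator identity (so the second integration by parts is implicit in pairing $(X_l)_{\phi_{_U}}^*w$ against $w$), whereas you perform that integration by parts explicitly; the computations are identical.
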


\begin{proof}
	For any function $w\in C_c^\infty(U)$,
	\begin{align*}
		&\int_U|X_jw|^2e^{-\phi_{_U}}\\
		=&\int_U\left<(X_j)_{\phi_{_U}}^*X_jw,w\right>e^{-\phi_{_U}}+O_{_U}(G_{_U}(w)\|w\|)\\
		=&\int_{U}\left<(X_j)_{\phi_{_U}}^*w,(X_j)_{\phi_{_U}}^*w\right>e^{-\phi_{_U}}-\int_{U}\left<\left[X_j,(X_j)_{\phi_{_U}}^*\right]w,w\right>e^{-\phi_{_U}}+O_{_U}(G_{_U}(w)\|w\|)\\
		\geq&-\int_{U}{\rm Re}\left<\left[X_j,(X_j)_{\phi_{_U}}^*\right]w,w\right>e^{-\phi_{_U}}+O_{_U}(G_{_U}(w)\|w\|).
	\end{align*}
	Again by
	\begin{align*}
		\left[X_j,(X_j)_{\phi_{_U}}^*\right]=-\sum_{l=1}^nd_{jj}^lX_l-\sum_{l=1}^ne_{jj}^l(X_l)_{\phi_{_U}}^*+\sum_{l=1}^ne_{jj}^l\bar{X}_l(\phi_{_U})+X_j\bar{X}_j(\phi_{_U}),
	\end{align*}
	we derive the desired estimate.
\end{proof}

\section{Global existence and regularity of the differential complex}
Let $\MV$ be a Levi flat structure of rank $n$ and corank $m$ over $M$. This section is devoted to establishing global existence theorems for the complex (\ref{complex1}) on non-compact and compact manifolds, together with a local existence result. We also derive a Sobolev regularity result for the canonical solution of the equation $\dmv u=f$ on compact manifolds.

\subsection{Global existence theorems}\label{existe.}
We begin with the following lemma, which is originally due to A. Andreotti and E. Vesentini in \cite{AV65} (see also \cite{Djp12C}).

\begin{lemma}\label{mm}
	$(i)$ Let $\varphi\in C^\infty(M)$ be a $q$-convex function with respect to $\MV$. There exists a Hermitian metric $\omega$ on $\mathbb{C}TM$ satisfying $(\ref{cme})$ such that for every $q'\geq q$, the sum of any $q'$ eigenvalues of the quadratic form $Q_{\varphi,{\mfp}}$ with respect to $\omega$ at ${\lmfp}\in\mathcal{C}_\varphi$ is positive, where
	\begin{align}\label{cpe}
		\mathcal{C}_\varphi:=\{{\lmfp}\in M\ |\ X_{\mfp}(\varphi)=0,\ X_{\mfp}\in\MV_{\mfp}\}.
	\end{align}
	
	$(ii)$ Let $(L,h_{_L})$ be a $q$-positive basic line bundle with respect to $\MV$, then there exists a Hermitian metric $\tilde{\omega}$ on $\mathbb{C}TM$ satisfying $(\ref{cme})$ such that for every $q'\geq q$, the sum of any $q'$ eigenvalues of the quadratic form $Q_{h_L,{\mfp}}$ with respect to $\tilde{\omega}$ at ${\lmfp}\in\mathcal{K}_{h_L}$ is positive.
\end{lemma}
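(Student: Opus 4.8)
The plan is to reduce both parts to a single pointwise statement in linear algebra—the classical minimum–maximum (Courant–Fischer) argument of Andreotti–Vesentini—and then to globalize it smoothly. First I observe that on the relevant set the two hypotheses take the same shape. For part $(ii)$ this is immediate: by Definition \ref{qpqc}, $q$-positivity says that $Q_{h_L,\mfp}$ has at least $n-q+1$ positive eigenvalues on the full space $\MV_\mfp$ at every $\lmfp\in\mathcal{K}_{h_L}$. For part $(i)$, note that at a point $\lmfp\in\mathcal{C}_\varphi$ (see $(\ref{cpe})$) we have $\MV_\mfp\subseteq{\rm Ker}(\rd\varphi)$, so $\MV_\mfp\cap{\rm Ker}(\rd\varphi)=\MV_\mfp$ has dimension $n$; hence the $q$-convexity condition of Definition \ref{qpqc} reduces, on $\mathcal{C}_\varphi$, to the statement that $Q_{\varphi,\mfp}$ has at least $n-q+1$ positive eigenvalues on all of $\MV_\mfp$. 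Thus both parts follow once I prove the following: given a Hermitian form $Q$ on $\MV_\mfp$ with at least $n-q+1$ positive eigenvalues relative to a background metric, one can rescale the metric so that the sum of any $q'\geq q$ of its eigenvalues becomes positive, and this can be carried out smoothly in $\lmfp$.

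For the pointwise statement, fix a background Hermitian metric $\omega_0$ and let $\gamma_1\leq\cdots\leq\gamma_n$ be the eigenvalues of $Q$ with respect to $\omega_0$. The hypothesis forces at most $q-1$ non-positive eigenvalues, i.e. $\gamma_q>0$, and hence $\gamma_{q'}>0$ for all $q'\geq q$. Since the minimum of all sums of $q'$ eigenvalues is $\gamma_1+\cdots+\gamma_{q'}$, and these partial sums are strictly increasing once $\gamma_q>0$, the requirement ``every sum of $q'$ eigenvalues is positive for all $q'\geq q$'' is equivalent to the single inequality $\gamma_1+\cdots+\gamma_q>0$. Now I stretch the at-most-$(q-1)$-dimensional eigenspace $N$ on which $Q$ is non-positive: replacing $\omega_0$ by the metric equal to $t\,\omega_0$ on $N$ and to $\omega_0$ on its $\omega_0$-orthogonal complement (a $Q$-invariant splitting) divides each non-positive eigenvalue by $t$ while leaving the positive ones fixed. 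For $t$ large the at most $q-1$ non-positive eigenvalues become arbitrarily small in modulus, so $\gamma_1+\cdots+\gamma_q$ is dominated by the positive terms $\gamma_{s+1},\dots,\gamma_q$ and becomes positive; the $q$-th smallest eigenvalue remains $\gamma_q>0$, so the reduction above applies in the new metric.

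The main obstacle is the smooth globalization, since the eigenspace $N_\mfp$ need not vary smoothly where eigenvalues collide. I would overcome this by working only near the closed set $S$ (equal to $\mathcal{C}_\varphi$ in $(i)$, to $\mathcal{K}_{h_L}$ in $(ii)$): there $\gamma_q>0$ produces a spectral gap separating the non-positive spectrum from the positive one, so on the open set $\{\gamma_q>0\}\supseteq S$ the projection $\Pi^-$ onto the non-positive eigenspace is a smooth endomorphism field, realized as a contour integral of the resolvent of the $\omega_0$-selfadjoint endomorphism representing $Q$. Taking a cutoff $\chi$ supported in $\{\gamma_q>0\}$ with $\chi\equiv1$ near $S$, a smooth stretching factor $t=t(\lmfp)$ chosen large enough along $S$, and forming the convex combination $\omega:=\chi\,\omega_0\big((I+(t-1)\Pi^-)\,\cdot\,,\cdot\big)+(1-\chi)\,\omega_0$, I obtain a globally smooth positive-definite metric that coincides with the stretched metric on $S$ (where $\chi\equiv1$) and with $\omega_0$ away from a neighborhood of $S$; the eigenvalue-sum condition then holds on $S$ by the pointwise step. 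Finally the normalization $(\ref{cme})$ is secured by a conformal rescaling $\omega\mapsto e^{2\rho}\omega$ with $\rho$ large (position-dependent if needed): this divides every eigenvalue of $Q$ by $e^{2\rho}$, hence preserves the sign of every eigenvalue sum, while dividing $|\rd\eta_\nu|^2$ by $e^{2\rho}$ so that $(\ref{cme})$ is met. Part $(ii)$ is verbatim with $(\varphi,\mathcal{C}_\varphi)$ replaced by $(h_{_L},\mathcal{K}_{h_L})$.
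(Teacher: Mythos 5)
Your reduction to a single pointwise statement is correct (at $\lmfp\in\mathcal{C}_\varphi$ one has $\MV_{\mfp}\subseteq{\rm Ker}(\rd\varphi)$, so both parts become the same eigenvalue problem on all of $\MV_{\mfp}$), and the pointwise stretching argument is sound. The gap is in the globalization. On the open set $\{\gamma_q>0\}$ there is \emph{no} spectral gap separating the non-positive eigenvalues from the positive ones: only the $q$-th smallest eigenvalue is known to be positive, while each of $\gamma_1,\dots,\gamma_{q-1}$ may cross zero, on $S$ itself and at points arbitrarily close to it. Consequently the rank of the projection $\Pi^-$ onto the non-positive eigenspace jumps, $\Pi^-$ is not even continuous, and no resolvent contour can separate $\{\gamma\leq 0\}$ from $\{\gamma>0\}$ uniformly near such points. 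Your metric $\chi\,\omega_0\big((I+(t-1)\Pi^-)\cdot,\cdot\big)+(1-\chi)\omega_0$ is therefore not smooth on ${\rm supp}\,\chi$. This is precisely the difficulty the paper's proof is built to avoid: instead of the sharp spectral cutoff $\mathbf{1}_{\{t\leq 0\}}$ it applies a \emph{smooth} function $\theta$ (with $\theta(t)\geq|t|/\delta$ for $t\leq 0$ and $\theta(t)=t$ for $t\geq 1$) through the functional calculus $A\mapsto\eta\,\theta(\eta^{-1}A)$, which depends smoothly on the endomorphism $A$ regardless of eigenvalue collisions or sign changes; the auxiliary positive function $\eta\leq\lambda_q^0$ on $\mathcal{C}_\varphi$ (built via Whitney approximation and a partition of unity) ensures the top $n-q+1$ eigenvalues are left equal to $1$ while the bottom $q-1$ are compressed into $[-\delta,1]$, after which $\delta<1/(q-1)$ gives the eigenvalue-sum positivity.

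A second, smaller omission: by Definition \ref{quadratic.} the form $Q_{\varphi,{\mfp}}$ (resp.\ $Q_{h_L,{\mfp}}$) is only defined on the closed set $\mathcal{K}_\varphi$ (resp.\ $\mathcal{K}_{h_L}$), so there is no globally defined smooth endomorphism field to which a resolvent or functional calculus can be applied directly. One must first fix local smooth extensions $Q_{\varphi,{\mfp}}^V$ (depending on a frame of $\MV$ and on the coefficients $e_{jk}^l$), apply the smooth functional calculus to each, and verify that the resulting local metrics agree on $V\cap\mathcal{C}_\varphi$ before patching with a partition of unity; your outline treats $Q$ as a globally smooth field and skips this consistency check. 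The final conformal rescaling to achieve $(\ref{cme})$ is fine and matches the paper.
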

\begin{proof}
	The proof for the case of the $q$-positive basic line bundle $(L,h_{_L})$ is analogous to that of the $q$-convex function $\varphi$, provided we observe that the $q$-positivity of $(L,h_{_L})$ imposes the eigenvalue conditions of $Q_{h_L,{\mfp}}$ on the whole space $\MV_{\mfp}$ at ${\lmfp}\in\mathcal{K}_{h_L}$. Therefore, we will focus exclusively on the first case. 
	
	We first prove that for any number $\delta>0$, there exists a Hermitian metric $\hat{\omega}$ on $\mathbb{C}TM$ such that, the eigenvalues $\hat{\lambda}_1,\cdots,\hat{\lambda}_{n}$ of the quadratic form $Q_{\varphi,{\mfp}}$ with respect to $\hat{\omega}$ at ${\lmfp}\in\mathcal{C}_\varphi$ satisfy $-\delta\leq\hat{\lambda}_1,\cdots,\hat{\lambda}_{q-1}\leq1$ and $\hat{\lambda}_q=\cdots=\hat{\lambda}_{n}=1$.
	
	Given a Hermitian metric $\omega^0$ on $\mathbb{C}TM$, then $\mathbb{C}TM$ splits orthogonally as $\mathbb{C}TM=\MV\oplus\MV ^\perp$, which gives a direct sum
	\begin{align*}
		\omega^0=\omega_\MV^0\oplus \omega_{\MV^\perp}^0.
	\end{align*}
	Let $A_{\MV,0}\in C^0({\rm End}(\MV|_{_{\mathcal{K}_\varphi}}))$ be the Hermitian endomorphism associated to $Q_{\varphi,{\mfp}}$ defined by
	\begin{align*}
		Q_{\varphi,{\mfp}}(X,Y)=\omega_\MV ^0(A_{\MV,0}X,Y)\ \text{for any}\ X,Y\in \MV_{\mfp}\ \text{at}\ {\lmfp}\in\mathcal{K}_\varphi,
	\end{align*}
	and $\lambda_1^0\leq\cdots\leq\lambda_{n}^0$ the eigenvalues of $A_{\MV,0}$. Since $\varphi$ is $q$-convex with respect to $\MV$ and 
	\begin{align*}
		\MV_{\mfp}\cap{\rm Ker}({\rm d}\varphi)=\MV_{\mfp}\ \text{on}\ \mathcal{C}_\varphi\subseteq\mathcal{K}_\varphi,
	\end{align*}
	we know that $\lambda_{q}^0({\lmfp})>0$ at ${\lmfp}\in\mathcal{C}_\varphi$. The fact that $\mathcal{C}_\varphi$ is closed on $M$ enables us to obtain a real-valued function $\tilde{\lambda}_q^0\in C^0(M)$ with $\tilde{\lambda}_q^0=\lambda_{q}^0$ on $\mathcal{C}_\varphi$, then there exists a neighborhood $\Omega$ of $\mathcal{C}_\varphi$ such that $\tilde{\lambda}_q^0>0$ on $\Omega$. By means of the Whitney approximation theorem, one can find a function $0<\eta'\in C^\infty(\Omega)$ such that $\eta'\leq\tilde{\lambda}_q^0$ on $\Omega$. Let $\{\psi,1-\psi\}$ be a smooth partition of unity subordinate to the covering $\{\Omega,M\setminus\mathcal{C}_\varphi\}$, the function $0<\eta:=\psi\eta'+1-\psi\in C^\infty(M)$ satisfies
	\begin{align*}
		\eta\leq\lambda_q^0\ \text{on}\ \mathcal{C}_\varphi.
	\end{align*}
	
	For any small open subset $V\subseteq M$, consider a smooth frame $\{X_1,\cdots,X_n\}$ of $\MV$ over $V$ and functions $e_{jk}^l\in C^\infty(V)$ in (\ref{cbl}), we define
	\begin{align*}
		Q_{\varphi,{\mfp}}^V(\xi,\xi):={\rm Re}\sum_{j,k=1}^{n}\bigg(X_j\bar{X}_k\varphi({\lmfp})+\sum_{l=1}^{n}e_{jk}^l({\lmfp})\bar{X}_l\varphi({\lmfp})\bigg)\xi_{j}\bar{\xi}_{k},
	\end{align*}
	where $\xi:=\sum_{j=1}^n\xi_jX_j\in\MV_{\mfp}$. Let $A_{\MV,0}^V\in C^\infty({\rm End}(\MV|_{_V}))$ be the Hermitian endomorphism associated to $Q_{\varphi,{\mfp}}^V$ with respect to $\omega_\MV ^0$, and $A_{\MV,0}^V$ has eigenvalues $\lambda_1^V\leq\cdots\leq\lambda_{n}^V$. The well-definedness of $Q_{\varphi,\mfp}$ yields that
	\begin{align*}
		A_{\MV,0}^V=A_{\MV,0}\ \text{on}\ V\cap\mathcal{C}_\varphi,
	\end{align*}
	in particular,
	\begin{align*}
		\lambda_j^V({\lmfp})=\lambda_j^0({\lmfp})\ \text{at}\ {\lmfp}\in V\cap\mathcal{C}_\varphi\ \text{for}\ 1\leq j\leq n.
	\end{align*}
	We then choose a positive function $\theta\in C^\infty(\mathbb{R})$ such that
	\begin{align*}
		\theta(t)\geq|t|/\delta\ \text{for}\ t\leq0;\quad \theta(t)\geq t\ \text{for}\ t\geq0;\quad \theta(t)=t\ \text{for}\ t\geq1.
	\end{align*}
	Let $\omega_\MV ^{_V}$ be the Hermitian metric on $\MV |_{_V}$ defined by the following Hermitian endomorphism
	\begin{align*}
		A_\MV ^V({\lmfp}):=\eta({\lmfp})\theta\big[(\eta({\lmfp}))^{-1}A_{\MV,0}^V({\lmfp})\big]\in C^\infty({\rm End}(\MV|_{_V})).
	\end{align*}
	Thus, the eigenvalues of $A_\MV ^V({\lmfp})$ are $\gamma_{j}^V({\lmfp})=\eta({\lmfp})\theta(\lambda_j^V({\lmfp})/\eta({\lmfp}))>0$ ($1\leq j\leq n$). Moreover, for ${\lmfp}\in V\cap\mathcal{C}_\varphi$ we have
	\begin{align}\label{mep}
		\left\{
		\begin{aligned}	
			\gamma_{j}^V({\lmfp})&\geq|\lambda_j^V({\lmfp})|/\delta=|\lambda_j^0({\lmfp})|/\delta,&&\text{if}\ \lambda_j^0({\lmfp})\leq0,\\
			\gamma_{j}^V({\lmfp})&\geq\lambda_j^V({\lmfp})=\lambda_j^0({\lmfp}),&&\text{if}\  \lambda_j^0({\lmfp})\geq0,\\
			\gamma_{j}^V({\lmfp})&=\lambda_j^V({\lmfp})=\lambda_j^0({\lmfp}),&&\text{if}\ j\geq q.
		\end{aligned}
		\right.
	\end{align}
	
	Let $\{V_\alpha\}_\alpha$ be a locally finite open covering of $M$ where $Q_{\varphi,{\mfp}}^{V_\alpha}$ can be defined on each $V_\alpha$, and $\{\psi_\alpha\}_\alpha$ a smooth partition unity subordinate to $\{V_\alpha\}_\alpha$. Define
	\begin{align*}
		\omega_\MV :=\sum_{\alpha}\psi_\alpha \omega_\MV ^{_{V_\alpha}}.
	\end{align*}
	Since $A_{\MV,0}^{V_\alpha}=A_{\MV,0}$ on $V_\alpha\cap\mathcal{C}_\varphi$ for any $\alpha$, we obtain that $A_\MV ^{V_\alpha}({\lmfp})=A_\MV ^{V_\beta}({\lmfp})$ at ${\lmfp}\in V_\alpha\cap V_\beta\cap\mathcal{C}_\varphi$, from which
	\begin{align*}
		\gamma_j^{V_\alpha}({\lmfp})=\gamma_j^{V_\beta}({\lmfp})=:\gamma_j^{0}({\lmfp})\ \text{at}\ {\lmfp}\in V_\alpha\cap V_\beta\cap\mathcal{C}_\varphi\ \text{for}\ 1\leq j\leq n
	\end{align*}
	follows naturally. By construction, the eigenvalues of the quadratic form $Q_{\varphi,{\mfp}}$ at ${\lmfp}\in\mathcal{C}_\varphi$ with respect to $\omega_\MV $ are
	\begin{align*}
		\hat{\lambda}_j({\lmfp}):=\frac{\lambda_j^0({\lmfp})}{\sum_{\alpha}\psi_\alpha({\lmfp})\gamma_j^{V_\alpha}({\lmfp})}=\frac{\lambda_j^0({\lmfp})}{\gamma_j^{0}({\lmfp})},\ 1\leq j\leq n,
	\end{align*}
	and they have the required properties by (\ref{mep}). Hence, the metric $\hat{\omega}:=\omega_\MV \oplus\omega_{\MV^\perp}^0$ is the desired metric.
	
	Choose $\delta<1/(q-1)$ and set
	\begin{align*}
		\omega:=e^\rho\hat{\omega},
	\end{align*}
	where $\rho\in C^\infty(M)$ is an exhaustion function such that the metric $\omega$ satisfies (\ref{cme}). The eigenvalues $\lambda_j$ of $Q_{\varphi,{\mfp}}$ with respect to $\omega$ are then $e^{-\rho}\hat{\lambda}_j$ for $1\leq j\leq n$. For any ${\lmfp}\in\mathcal{C}_\varphi$,
	\begin{align*}
		\lambda_1+\cdots+\lambda_{q'}=e^{-\rho}(\hat{\lambda}_1+\cdots+\hat{\lambda}_{q-1}+\hat{\lambda}_q+\cdots+\hat{\lambda}_{q'})\geq e^{-\rho}(1-(q-1)\delta)>0.
	\end{align*}
	The proof is thus complete.
\end{proof}

In what follows, we will work with the metrics given by Lemma \ref{mm}. To prove Theorem \ref{t1} and  Theorem \ref{ct}, we also need the following lemma.
\begin{lemma}\label{evl}
	$(i)$ Let $\varphi\in C^\infty(M)$ be a $q$-convex function with respect to $\MV$. For each ${\lmfp}_0\in(M\setminus\mathcal{K}_\varphi)\cup\mathcal{C}_\varphi$, there is an open neighborhood $U$ of $\lmfp_0$ with the following property: given a smooth frame $\{X_j\}_{j=1}^n$ of $\MV$ over $U$, the coefficients $e_{jk}^l$ in $(\ref{cbl})$ can be appropriately chosen such that for all $q'\geq q$, the sum of any $q'$ eigenvalues of the quadratic form
	\begin{align}\label{Qu}
		Q_{\varphi,{\mfp}}^U(\xi,\xi):={\rm Re}\sum_{j,k=1}^{n}\bigg(X_j\bar{X}_k\varphi({\lmfp})+\sum_{l=1}^{n}e_{jk}^l({\lmfp})\bar{X}_l\varphi({\lmfp})\bigg)\xi_{j}\bar{\xi}_{k}
	\end{align}
	with respect to $\omega$ is positive for $\xi\in\MV_{\mfp}$ at every ${\lmfp}\in U$, where $\mathcal{C}_\varphi$ is defined by $(\ref{cpe})$.
	
	$(ii)$ Let $(L,h_{_L})$ be a $q$-positive basic line bundle with respect to $\MV$, where $h_{_L}$ is given by $\{(U_\alpha,\phi_\alpha)\}_\alpha$ satisfying $(\ref{lm.})$. For each $U_\alpha$, fix a smooth frame $\{X_j\}_{j=1}^n$ of $\MV$. The sum of any $q'$ $(\geq q)$ eigenvalues of the quadratic form $Q_{h_L,{\mfp}}^{\alpha}$ $($obtained from $(\ref{Qu})$ by setting $U=U_\alpha, \varphi=\phi_\alpha$$)$ with respect to $\tilde{\omega}$ is positive at every ${\lmfp}\in U_\alpha$ for suitable coefficients $e_{jk}^l$ in $(\ref{cbl})$.
\end{lemma}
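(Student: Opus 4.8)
My plan rests on the observation that, for a Levi flat structure, $\MV\cap\overline{\MV}\neq 0$, and it is precisely this real part that produces the non-uniqueness of the coefficients $e_{jk}^l$ in $(\ref{cbl})$; I would exploit that freedom to inject a large positive term into the quadratic form $Q_{\varphi,\mfp}^U$ at points where $\varphi$ is not annihilated by the real directions of $\MV$. Concretely, comparing two admissible systems $e_{jk}^l$ and $\tilde e_{jk}^l$ as in Step 1 of the proof that $Q_{h_L,\mfp}$ is well defined, their difference $s_{jk}^l:=\tilde e_{jk}^l-e_{jk}^l$ is constrained only by $\sum_{l}s_{jk}^l\bar X_l\in\MV\cap\overline{\MV}$. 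Hence for any fixed index $j$ and any real smooth section $W$ of $\MV\cap\overline{\MV}$ one may alter $e_{jj}^l$ so that the $(j,j)$-coefficient of $Q_{\varphi,\mfp}^U$ is augmented by the real quantity $W(\varphi)$, while every off-diagonal coefficient is left unchanged.

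For $(i)$ I would split $(M\setminus\mathcal{K}_\varphi)\cup\mathcal{C}_\varphi$ into its two pieces. If $\lmfp_0\in\mathcal{C}_\varphi$, then $X_l\varphi(\lmfp_0)=0$ for every frame field, whence $\bar X_l\varphi(\lmfp_0)=\overline{X_l\varphi(\lmfp_0)}=0$ since $\varphi$ is real; thus all first-order terms vanish at $\lmfp_0$ and $Q_{\varphi,\lmfp_0}^U$ reduces to $Q_{\varphi,\lmfp_0}$ for any admissible $e_{jk}^l$. By Lemma \ref{mm}(i) the sum of any $q'$ eigenvalues of $Q_{\varphi,\lmfp_0}$ with respect to $\omega$ is positive for $q'\geq q$, and since this sum depends continuously on $\lmfp$, fixing any smooth admissible $e_{jk}^l$ yields a neighborhood $U$ on which the positivity persists. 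If instead $\lmfp_0\in M\setminus\mathcal{K}_\varphi$, choose a real section $W$ of $\MV\cap\overline{\MV}$ near $\lmfp_0$ with $W(\varphi)(\lmfp_0)\neq 0$; replacing $W$ by $-W$ and shrinking to a relatively compact $U$, we may assume $W(\varphi)\geq\epsilon>0$ on $U$. Augmenting each diagonal coefficient by $c\,W(\varphi)$ as above and keeping the remaining (bounded) coefficients fixed, the form becomes positive definite with respect to $\omega$ throughout $U$ once $c$ is large, so the sum of any $q'$ eigenvalues is positive there.

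For $(ii)$ the $q$-positivity of $(L,h_{_L})$ yields, through Lemma \ref{mm}(ii), positivity of the eigenvalue sums of $Q_{h_L,\mfp}$ on the full space $\MV_\mfp$ at every point of $\mathcal{K}_{h_L}$, so the exceptional set $\mathcal{K}_\varphi\setminus\mathcal{C}_\varphi$ that forced the restriction in $(i)$ no longer obstructs us, and one may aim for all of $U_\alpha$. On $\mathcal{K}_{h_L}$ each admissible modification contributes $W(\phi_\alpha)=0$, so $Q_{h_L,\mfp}^\alpha$ coincides with $Q_{h_L,\mfp}$ there; by continuity there is an open $\Omega\supseteq\mathcal{K}_{h_L}\cap U_\alpha$ on which the original form already has positive $q'$-eigenvalue sums. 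Using the normal coordinates $(z,y,t)$ from the complex Frobenius theorem, in which $\MV\cap\overline{\MV}$ is spanned by the $\partial/\partial t_\tau$, I would assemble a real leafwise gradient field $W^{\ast}\in C^\infty(U_\alpha,\MV\cap\overline{\MV})$ with $W^{\ast}(\phi_\alpha)\geq 0$ vanishing exactly on $\mathcal{K}_{h_L}$, and augment each diagonal coefficient by $c\,W^{\ast}(\phi_\alpha)$ with a positive smooth factor $c$. On $\Omega$ the added term is positive semidefinite, so by the min--max principle no eigenvalue decreases and the $q'$-positivity with respect to $\tilde\omega$ is preserved; on $U_\alpha\setminus\mathcal{K}_{h_L}$, where $W^{\ast}(\phi_\alpha)>0$, choosing $c$ large enough locally makes the form positive definite.

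The genuine obstacle, in both parts, is the off-$\mathcal{K}$ mechanism: establishing that the ambiguity in $e_{jk}^l$ is exactly the ambiguity of adding leafwise-real derivatives of $\varphi$ (resp.\ $\phi_\alpha$), and that these can be driven to dominate the fixed Hessian part along the diagonal — this is precisely where Levi flatness compensates for the loss of positivity outside $\mathcal{K}$. The only further technical point, confined to $(ii)$, is gluing the near-$\mathcal{K}_{h_L}$ and away-from-$\mathcal{K}_{h_L}$ regimes into one smooth admissible system on all of $U_\alpha$; since each regime merely requires the factor $c$ to be large enough, a partition-of-unity construction of $c$ finishes the argument.
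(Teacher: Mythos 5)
Your proposal is correct and follows essentially the same route as the paper: near $\mathcal{C}_\varphi$ (resp.\ $\mathcal{K}_{h_L}$) you invoke Lemma \ref{mm} and continuity, and off $\mathcal{K}_\varphi$ (resp.\ off a neighborhood $\Omega$ of $\mathcal{K}_{h_L}$) you exploit the ambiguity of the $e_{jk}^l$ by adding a multiple of a real section $W$ of $\MV\cap\overline{\MV}$ to the diagonal, so that $c\,W(\varphi)\sum_j|\xi_j|^2$ dominates the fixed part for $c$ large — exactly the paper's replacement $e_{jk}^l\mapsto e_{jk}^l+\psi\delta_{jk}\bar{e}^l$ with $X=\sum_l e^lX_l$ real and $X(\varphi)>0$. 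The only cosmetic differences are that the paper builds $X$ (and the large factor $\psi$) globally on $U$ via a partition of unity and Whitney approximation rather than shrinking $U$ and using a constant $c$, which changes nothing essential.
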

\begin{proof}
	(i) According to (i) in Lemma \ref{mm}, the sum of any $q'$ eigenvalues of the quadratic form $Q_{\varphi,{\mfp}}$ with respect to $\omega$ is positive at ${\lmfp}\in\mathcal{C}_\varphi$. Hence, for any $\lmfp_0\in\mathcal{C}_\varphi$, there exists an open neighborhood $U$ of ${\lmfp}_0$ with a smooth frame $\{X_j\}_{j=1}^n$ so that, $Q_{\varphi,{\mfp}}^U$ satisfies the above positivity condition at $\lmfp\in U$ with respect to $\omega$ for arbitrarily given $e_{jk}^l$'s in (\ref{cbl}).
	
	For each ${\lmfp}_0\in M\setminus\mathcal{K}_\varphi$, let $U\subseteq M\setminus\mathcal{K}_\varphi$ be an open neighborhood of ${\lmfp}_0$. By the constant rank assumption on $\MV\cap\overline{\MV}$, it follows that for any ${\lmfp}\in U$, there are a neighborhood $V_{\mfp}$ and a real vector field $X_{({\mfp})}\in C^\infty(V_{\mfp},\MV\cap\bar{\MV})$ such that $X_{({\mfp})}(\varphi)|_{_{V_{\mfp}}}>0$. Then, by using a partition of unity, we obtain a real vector field $X=\sum_{l=1}^ne^lX_l\in C^\infty(U,\MV\cap\bar{\MV})$ fulfilling $X(\varphi)>0$ on $U$. Let $\psi\in  C^\infty(U)$ be a function to be determined, replacing $e_{jk}^l$ in $Q_{\varphi,{\mfp}}^U$ by
	\begin{align*}
		e_{jk}^l+\psi\delta_{jk}\bar{e}^l,
	\end{align*}
	where $\delta_{jk}$ is the Kronecker delta (accordingly $d_{jk}^l$ in (\ref{cbl}) is replaced by $d_{jk}^l+\psi\delta_{jk}e^l$), the quadratic form $Q_{\varphi,{\mfp}}^U(\xi,\xi)$ results in
	\begin{align}\label{nq}
		{\rm Re}\sum_{j,k=1}^{n}\bigg(X_j\bar{X}_k\varphi({\lmfp})+\sum_{l=1}^{n}e_{jk}^l({\lmfp})\bar{X}_l\varphi({\lmfp})\bigg)\xi_{j}\bar{\xi}_{k}+\psi({\lmfp}) X(\varphi)({\lmfp})\sum_{j=1}^{n}|\xi_j|^2.
	\end{align}
	We choose $\psi\in C^\infty(U)$ satisfying
	\begin{align*}
		\psi({\lmfp})>\frac{-\lambda_{\min}({\lmfp})+1}{X(\varphi)({\lmfp})},
	\end{align*}
	where $\lambda_{\min}({\lmfp})$ is the smallest eigenvalue of the quadratic form
	\begin{align*}
		{\rm Re}\sum_{j,k=1}^{n}\bigg(X_j\bar{X}_k\varphi({\lmfp})+\sum_{l=1}^{n}e_{jk}^l({\lmfp})\bar{X}_l\varphi({\lmfp})\bigg)\xi_{j}\bar{\xi}_{k}
	\end{align*}
	with respect to $\omega$ on $U$. The quadratic form ${Q}_{\varphi,{\mfp}}^U(\xi,\xi)$ in (\ref{nq}) fulfills the desired positivity condition on $U$, since 
	\begin{align*}
		{Q}_{\varphi,{\mfp}}^U(\xi,\xi)\geq\big(\lambda_{\min}({\lmfp})+\psi({\lmfp}) X(\varphi)({\lmfp})\big)\sum_{j=1}^n|\xi_j|^2>\sum_{j=1}^{n}|\xi_j|^2.
	\end{align*}
	
	(ii) By (ii) in Lemma \ref{mm}, for every $\lmfp\in \mathcal{K}_{h_L}$, the sum of any $q'$ eigenvalues of $Q_{h_L,\mfp}$ with respect to $\tilde{\omega}$ is positive. Thus, there is an open neighborhood $\Omega\subseteq U_\alpha$ of $\mathcal{K}_{h_L}\cap U_\alpha$ ($\neq\emptyset$ without loss of generality) such that $Q_{h_L,\mfp}^\alpha$ meets the above positivity condition on $\Omega$ with respect to $\tilde{\omega}$ for any given $e_{jk}^l$'s in (\ref{cbl}).
	
	Again by the assumption that $\MV\cap\overline{\MV}$ has constant rank and using a partition of unity, one can find  a real vector field $X=\sum_{l=1}^{n}e^lX_l\in C^\infty(U_\alpha,\mathcal{V}\cap\bar{\mathcal{V}})$ fulfilling $X(\phi_\alpha)\geq0$ on $U_\alpha$ and $X(\phi_\alpha)|_{_{U_\alpha\setminus\Omega}}>0$. Let $\psi\in C^\infty(U_\alpha)$ to be determined, replacing $e_{jk}^l$ in $Q_{h_L,\mfp}^\alpha$ by $e_{jk}^l+\psi\delta_{jk}\bar{e}^l$, where $\delta_{jk}$ is the Kronecker delta, the quadratic form $Q_{h_L,\mfp}^\alpha(\xi,\xi)$ becomes
	\begin{align*}
		{\rm Re}\sum_{j,k=1}^{n}\bigg(X_j\bar{X}_k\phi_\alpha(\lmfp)+\sum_{l=1}^{n}e_{jk}^l(\lmfp)\bar{X}_l\phi_\alpha(\lmfp)\bigg)\xi_{j}\bar{\xi}_{k}+\sum_{j=1}^{n}\psi(\lmfp) X(\phi_\alpha)(\lmfp)|\xi_j|^2.
	\end{align*}
	Thanks to the Whitney approximation theorem, we can select $0\leq\psi\in C^\infty(U_\alpha)$ such that
	\begin{align*}
		\psi|_{_{U_\alpha\setminus\Omega}}>\frac{-\lambda_{\min}^\alpha+1}{X(\phi_\alpha)}\bigg|_{U_\alpha\setminus\Omega},
	\end{align*}
	where $\lambda_{\min}^\alpha$ is the smallest eigenvalue of 
	\begin{align*}
		{\rm Re}\sum_{j,k=1}^{n}\bigg(X_j\bar{X}_k\phi_\alpha(\lmfp)+\sum_{l=1}^{n}e_{jk}^l(\lmfp)\bar{X}_l\phi_\alpha(\lmfp)\bigg)\xi_{j}\bar{\xi}_{k}
	\end{align*}
	with respect to $\tilde{\omega}$ on $U_\alpha$.
	Hence, the quadratic form ${Q}_{h_L,\mfp}^\alpha(\xi,\xi)$ satisfies the desired positivity condition on $U_\alpha$, since 
	\begin{align*}
		{Q}_{h_L,\mfp}^\alpha(\xi,\xi)\geq
		\left\{
		\begin{aligned}
			&{\rm Re}\sum_{j,k=1}^{n}\bigg(X_j\bar{X}_k\phi_\alpha(\lmfp)+\sum_{l=1}^{n}e_{jk}^l(\lmfp)\bar{X}_l\phi_\alpha(\lmfp)\bigg)\xi_{j}\bar{\xi}_{k}\ \text{on}\ \Omega,\\
			&\big(\lambda_{\min}^\alpha(\lmfp)+\psi(\lmfp) X(\phi_\alpha)(\lmfp)\big)\sum_{j=1}^n|\xi_j|^2>\sum_{j=1}^{n}|\xi_j|^2\ \text{on}\ U_\alpha\setminus\Omega.
		\end{aligned}
		\right.
	\end{align*}
	The proof is thus complete.
\end{proof}

From now on, we will adopt those coefficients $e_{jk}^l$ given by Lemma \ref{evl}, we can now proceed to prove the global existence theorem on non-compact manifolds.

\begin{thm}[=Theorem \ref{t1}]\label{t1'}
	Suppose that $M$ admits a $q$-convex exhaustion function $\varphi\in C^\infty(M)$ with respect to a Levi flat structure $\MV$, $\vartheta$ is a smooth $1$-form on $M$ satisfying $(\ref{mnf})$, and $E$ is a basic vector bundle over $(M,\MV)$. Then for any $f\in L^2_{loc}(M,\Lambda_\MV^{m,q'}(E))$ with $ \dmv f=0 $, there exists some $ u\in L^2_{loc}(M,\Lambda_\MV^{m,q'-1}(E)) $ such that $ \dmv u=f $ for $q'\geq q$. 
\end{thm}
\begin{proof}
	We first recall that
	\begin{align*}
		\mathcal{C}_\varphi=\{{\lmfp}\in M\ |\ X_{\mfp}(\varphi)=0,\ X_{\mfp}\in\MV_{\mfp}\}\subseteq\mathcal{K}_\varphi
	\end{align*}
	is a closed subset. We will consider the Bochner formula (\ref{be.}) locally around ${\lmfp}\in \mathcal{K}_\varphi\setminus\mathcal{C}_\varphi$ and ${\lmfp}\in(M\setminus\mathcal{K}_\varphi)\cup\mathcal{C}_\varphi$ separately, then derive the global estimate (\ref{priori estimate}) by using a partition of unity and replacing $\varphi$ by $\chi(\varphi)$ with a suitable convex increasing function $\chi\in C^\infty(\mathbb{R})$.
	
	We begin by considering ${\lmfp}\in\mathcal{K}_\varphi\setminus\mathcal{C}_\varphi$. Let $\pi:\mathbb{C}TM\rightarrow\MV $ be the orthogonal projection and define
	\begin{align*}
		X_{\varphi}:=\pi({\rm grad}\varphi)/|\pi({\rm grad}\varphi)|\in C^\infty(M\setminus  \mathcal{C}_\varphi,\MV),
	\end{align*}
	whose dual $1$-form is denoted by $\omega^\varphi$. Given ${\lmfp}_0\in\mathcal{K}_\varphi\setminus\mathcal{C}_\varphi$, we choose an open neighborhood $V\subset\subset M\setminus \mathcal{C}_\varphi$ of ${\lmfp}_0$ and extend $X_\varphi|_{_V}$ to an orthonormal frame of $\MV$
	\begin{align*}
		\left\{X_1,\cdots,X_{n-1},X_n:=X_{\varphi}|_{_V}\right\}
	\end{align*}
	over $V$ with the dual frame
	\begin{align*}
		\left\{\omega^1,\cdots,\omega^{n-1},\omega^n:=\omega^{\varphi}|_{_V}\right\}.
	\end{align*}
	This leads to $X_1,\cdots, X_{n-1}\in C^\infty(V,\MV)\cap{\rm Ker}(\rd\varphi)$. By a unitary transformation of $X_j$ (with $\omega^j$ transforming accordingly) for $j<n$ (retaining the same notations for simplicity), we can achieve that for $\xi=\sum_{j=1}^{n-1}\xi_j X_j|_{\mfp_{_0}}\in\MV_{\mfp_{_0}}\cap{\rm Ker(\rd\varphi)}$
	\begin{align*}
		Q_{\varphi,\mfp_{_0}}(\xi,\xi)={\rm Re}\sum_{j,k=1}^{n-1}\bigg(X_j\bar{X}_k\varphi({\lmfp}_0)+\sum_{l=1}^{n}e_{jk}^l({\lmfp}_0)\bar{X}_l\varphi({\lmfp}_0)\bigg)\xi_{j}\bar{\xi}_{k}=\sum_{j=1}^{n-1}\mu_j({\lmfp}_0)|\xi_{j}|^2,
	\end{align*}
	where $\mu_j({\lmfp}_0)<0$ for $j\leq\kappa$ and $\mu_j({\lmfp}_0)\geq0$ for $j>\kappa$. For $0<\epsilon<\frac{1}{2}$, from the Bochner formula (\ref{be.}) in terms of $g=g_{_V}\otimes e_{_V}=\sum_{a=1}^{r}\sum_{|J|=q'}g_{J}^a\Theta_{_V}\wedge\omega^J\otimes e_{_V}^a\in C_c^\infty(V,\Lambda_\MV^{m,q'}(E))$ and the metric $e^{-\varphi}h_{_E}$ we read off
	\begin{align}\label{vt1}
		&\|\dmv g\|_\varphi^2+\|\delta_{\MV,\vartheta}^\varphi g\|_\varphi^2\nonumber\\
		\geq&(1-\epsilon)\sum_{a=1}^{r}\sum_{|J|=q'}\sum_{j=1}^n\int_{V}|X_jg_{J}^a|^2e^{-\varphi}+\int_{V}Q_{\varphi}^V(g_{_V},g_{_V})e^{-\varphi}+\int_{V}C_{_V,\epsilon}|g_{_V}|^2e^{-\varphi},
	\end{align}
	here and hereafter $C_{_V,\epsilon}$ denotes various continuous functions which are independent of $g$,
	\begin{align}\label{QU}
		Q_{\varphi}^V(g_{_V},g_{_V})=&\sum_{a=1}^{r}\sum_{|K|=q'-1}\sum_{j,k=1}^{n}\varphi_{j\bar{k}}^{_V}g_{kK}^a\bar{g}_{jK}^a\nonumber\\
		:=&{\rm Re}\sum_{a=1}^{r}\sum_{|K|=q'-1}\sum_{j,k=1}^{n}\bigg(X_j\bar{X}_k\varphi+\sum_{l=1}^ne_{jk}^l\bar{X}_l\varphi\bigg)g_{kK}^a\bar{g}_{jK}^a.
	\end{align}
	Applying Proposition \ref{ibp} to those terms in the first sum in (\ref{vt1}) where $j\leq\kappa$, we have
	\begin{align}\label{vt1'}
		\|\dmv g\|_\varphi^2+\|\delta_{\MV,\vartheta}^\varphi g\|_\varphi^2\geq\int_VP_{\varphi,_V}^\epsilon({\lmfp};g_{_V},g_{_V})e^{-\varphi}+\int_{V}C_{_V,\epsilon}|g_{_V}|^2e^{-\varphi},
	\end{align}
	where
	\begin{align*}
		P_{\varphi,_V}^\epsilon({\lmfp};g_{_V},g_{_V}):=\sum_{a=1}^{r}\sum_{|J|=q'}\sum_{j=1}^{\kappa}-(1-2\epsilon)\varphi_{j\bar{j}}^{_V}|g_{J}^a|^2+\sum_{a=1}^{r}\sum_{|K|=q'-1}\sum_{j,k=1}^{n}\varphi_{j\bar{k}}^{_V}g_{kK}^a\bar{g}_{jK}^a.
	\end{align*}
	Let $\chi\in C^\infty(\mathbb{R})$ be a strictly increasing convex function, then 
	\begin{align*}
		\mathcal{K}_{\chi(\varphi)}=\mathcal{K}_{\varphi},\ \mathcal{C}_{\chi(\varphi)}=\mathcal{C}_{\varphi},\ X_{\chi(\varphi)}=X_{\varphi}
	\end{align*}
	and
	\begin{align}\label{vt3}
		X_j\bar{X}_k\chi(\varphi)+\sum_{l=1}^ne_{jk}^l\bar{X}_l\chi(\varphi)=\chi'(\varphi)\bigg(X_j\bar{X}_k\varphi+\sum_{l=1}^ne_{jk}^l\bar{X}_l\varphi\bigg)+\chi''(\varphi)X_j\varphi\bar{X}_k\varphi.
	\end{align}
	Taking into account
	(\ref{vt1'}) and (\ref{vt3}) gives
	\begin{align}\label{e1}
		\|\dmv g\|_{\chi(\varphi)}^2+\|\delta_{\MV,\vartheta}^{\chi(\varphi)} g\|_{\chi(\varphi)}^2\geq&\int_V\bigg(\chi'(\varphi)P_{\varphi,_V}^\epsilon({\lmfp};g_{_V},g_{_V})+\sum_{a=1}^{r}\sum_{|K|=q'-1}\chi''(\varphi)|X_{\varphi}(\varphi)g_{nK}^a|^2\bigg)e^{-\chi(\varphi)}\nonumber\\
		&+\int_{V}C_{_V,\epsilon}|g_{_V}|^2e^{-\chi(\varphi)}.
	\end{align}
	We write
	\begin{align}\label{g1g2}
		g=g^1+g^2=&(g_{_V}^{1}+g_{_V}^{2})\otimes e_{_V}\nonumber\\
		:=&(X_{\varphi}\lrcorner\omega^\varphi\wedge g_{_V}+\omega^\varphi\wedge X_{\varphi}\lrcorner g_{_V})\otimes e_{_V}\nonumber\\
		=&\sum_{a=1}^{r}\bigg(\sideset{}{'}\sum_{|J|=q'}g_{J}^{a,1}\Theta_{_V}\wedge\omega^J+\sideset{}{''}\sum_{|J|=q'}g_J^{a,2}\Theta_{_V}\wedge\omega^J\bigg)\otimes e_{_V}^a,
	\end{align}
	where the notation $\sum'$ (or $\sum''$) means that the summation extends only over strictly increasing multi-indices $J$ with $n\notin J$ (respectively $n\in J$). From (\ref{g1g2}) we know that $g_{_V}^1=0$ when $q=n$. For any positive number $\varepsilon$, the item $P_{\varphi,_V}^\epsilon({\lmfp};g_{_V},g_{_V})$ can be bounded from below by the sum of the following two terms:
	\begin{align*}
		P_{\varphi,_V}^{\epsilon,\varepsilon}({\lmfp};g_{_V}^1,g_{_V}^1):=\sum_{a=1}^{r}\bigg(\sideset{}{'}\sum_{|J|=q'}\sum_{j=1}^{\kappa}-(1-2\epsilon)\varphi_{j\bar{j}}^{_V}|g_{J}^{a,1}|^2+\sideset{}{'}\sum_{|K|=q'-1}\sum_{j,k=1}^{n-1}\varphi_{j\bar{k}}^{_V}g_{kK}^{a,1}\bar{g}_{jK}^{a,1}-\varepsilon\sideset{}{'}\sum_{|J|=q'}|g_{J}^{a,1}|^2\bigg)
	\end{align*}
	and
	\begin{align*}
		P_{\varphi,_V}^{\epsilon,\varepsilon}({\lmfp};g_{_V}^2,g_{_V}^2):=&\sum_{a=1}^{r}\sideset{}{''}\sum_{|J|=q'}\sum_{j=1}^{\kappa}-(1-2\epsilon)\varphi_{j\bar{j}}^{_V}|g_{J}^{a,2}|^2+\sum_{a=1}^{r}\sideset{}{''}\sum_{|J|=q'}\varphi_{n\bar{n}}^{_V}|g_{J}^{a,2}|^2\\
		&+\sum_{a=1}^{r}\sideset{}{''}\sum_{|K|=q'-1}\sum_{j,k=1}^{n-1}\varphi_{j\bar{k}}^{_V}g_{kK}^{a,2}\overline{g}_{jK}^{a,2}-C_{\varepsilon}\sum_{a=1}^{r}\sideset{}{''}\sum_{|J|=q'}\sum_{j=1}^{n-1}|\varphi_{j\bar{n}}^{_V}g_{J}^{a,2}|^2,
	\end{align*}
	where $C_{\varepsilon}$ is a positive constant. It follows from the $q$-convexity of $\varphi$ at ${\lmfp}_0\in\mathcal{K}_\varphi\setminus\mathcal{C}_\varphi$ that
	\begin{align*}
		P_{\varphi,_V}^{0,0}({\lmfp}_0;g_{_V}^1,g_{_V}^1)=\sum_{a=1}^{r}\sideset{}{'}\sum_{|J|=q'}\bigg(\sum_{j=1}^{n-1}\mu_{j}^-({\lmfp}_0)+\sum_{j\in J}\mu_j({\lmfp}_0)\bigg)|g_{J}^{a,1}|^2
	\end{align*}
	is positive definite, where $\mu_j^-({\lmfp}_0):=\max\{-\mu_j({\lmfp}_0),0\}$. Thus, there is a positive function $\mu_{_V}^{\epsilon,\varepsilon}\in C^0(V)$ such that for sufficiently small $\epsilon$ and $\varepsilon$ (shrinking $V$ if necessary),
	\begin{align*}
		P_{\varphi,_V}^{\epsilon,\varepsilon}({\lmfp};g_{_V}^1,g_{_V}^1)\geq\mu_{_V}^{\epsilon,\varepsilon}|g_{_V}^1|^2.
	\end{align*}
	Then (\ref{e1}) implies
	\begin{align}\label{vt2}
		\|\dmv g\|_{\chi(\varphi)}^2+\|\delta_{\MV,\vartheta}^{\chi(\varphi)} g\|_{\chi(\varphi)}^2\geq&\int_V\chi'(\varphi)\big(\mu_{_V}^{\epsilon,\varepsilon}|g^1|_{h_\chi}^2+R_{\varphi,_V}^{\epsilon,\varepsilon}|g^2|_{h_\chi}^2\big)\nonumber\\
		&+\int_V\chi''(\varphi)|X_{\varphi}(\varphi)g^2|_{h_\chi}^2+\int_{V}C_{_V,\epsilon}|g|_{h_\chi}^2,
	\end{align}
	where $h_\chi:=e^{-\chi(\varphi)}h_{_E}$, $C_{_V,\epsilon}\in C^0(V)$ is independent of $g,\chi$, and
	\begin{align*}
		R_{\varphi,_V}^{\epsilon,\varepsilon}:=
		\sum_{j=1}^{\kappa}-(1-2\epsilon)\varphi_{j\bar{j}}^{_V}+\varphi_{n\bar{n}}^{_V}-\sqrt{\sum_{j,k=1}^{n-1}|\varphi_{j\bar{k}}^{_U}|^2}-C_{\varepsilon}\sum_{j=1}^{n-1}|\varphi_{j\bar{n}}^{_U}|^2.
	\end{align*}
	
	For any ${\lmfp}_0\in(M\setminus\mathcal{K}_\varphi)\cup\mathcal{C}_\varphi$, we apply the local Bochner formula (\ref{be.}) to the basic vector bundle $(E,h_\chi)$ near $\lmfp_0$. By Lemma \ref{evl} (i), there exist an open neighborhood $W$ of $\lmfp_0$ and a positive function $\lambda_{_W}\in C^0(W)$ such that for all $g=g_{_W}\otimes \sigma_{_W}\in C^\infty_c(W,\Lambda_\MV^{m,q'}(E))$
	\begin{align}\label{e2}
		\|\dmv g\|_{\chi(\varphi)}^2+\|\delta_{\MV,\vartheta}^{\chi(\varphi)} g\|_{\chi(\varphi)}^2\geq&\int_WQ_{\chi(\varphi)}^W(g_{_W},g_{_W})e^{-\chi(\varphi)}+\int_{W}C_{_W}|g_{_W}|^2e^{-\chi(\varphi)}\nonumber\\
		\geq&\int_W\chi'(\varphi)Q_{\varphi}^W(g_{_W},g_{_W})e^{-\chi(\varphi)}+\int_{W}C_{_W}|g|_{h_\chi}^2\nonumber\\
		\geq&\int_W\chi'(\varphi)\lambda_{_W}|g|_{h_\chi}^2+\int_{W}C_{_W}|g|_{h_\chi}^2,
	\end{align}
	where $C_{_W}\in C^0(W)$ is independent of $g,\chi$, the notations $Q_{\chi(\varphi)}^W(g_{_W},g_{_W})$ and $Q_{\varphi}^W(g_{_W},g_{_W})$ are defined analogously to (\ref{QU}).
	
	Now we choose coordinate charts $\{V_\alpha\}_\alpha$ ($V_\alpha\subset\subset M\setminus \mathcal{C}_\varphi$) and $\{W_\alpha\}_\alpha$ covering $M$ where (\ref{vt2}) and (\ref{e2}) are applicable respectively, and $\{V_\alpha,W_\alpha\}_\alpha$ are locally finite. Let $\{\psi_\alpha\}_\alpha\cup\{\tilde{\psi}_\alpha\}_\alpha$ be a smooth partition of unity subordinate to $\{V_\alpha\}_\alpha\cup\{W_\alpha\}_\alpha$ so that $\psi_\alpha\in C_c^\infty(V_\alpha)$, $\tilde{\psi}_\alpha\in C_c^\infty(W_\alpha)$ and $\sum_\alpha \psi_\alpha^2+\sum_\alpha\tilde{\psi}_\alpha^2=1$ on $M$ (shrinking $V_\alpha$ and $W_\alpha$ if necessary). For any $g\in  C^\infty_c(M,\Lambda_\MV^{m,q'}(E))$, employing (\ref{vt2}) and (\ref{e2}) to $\psi_\alpha g$ and $\tilde{\psi}_\alpha g$ respectively, and summing over $\alpha$ gives
	\begin{align}\label{e3}
		\|\dmv g\|_{\chi(\varphi)}^2+\|\delta_{\MV,\vartheta}^{\chi(\varphi)} g\|_{\chi(\varphi)}^2\geq&\int_M\bigg(\chi'(\varphi)\mu^{\epsilon,\varepsilon}|g^1|_{h_\chi}^2+\big(\chi'(\varphi)R_{\varphi}^{\epsilon,\varepsilon}|g^2|_{h_\chi}^2+\chi''(\varphi)\Psi|X_{\varphi}(\varphi)|^2|g^2|_{h_\chi}^2\big)\bigg)\nonumber\\
		&+\int_M\chi'(\varphi)\lambda|g|_{h_\chi}^2+\int_{M}C_\epsilon|g|_{h_\chi}^2,
	\end{align}
	where $g|_{_{M\setminus\mathcal{C}_\varphi}}=g^1+g^2$ are given by (\ref{g1g2}), $C_\epsilon\in C^0(M)$ is independent of $g,\chi$, and
	\begin{align*}
		\mu^{\epsilon,\varepsilon}:=\sum_\alpha\mu_{_{V_\alpha}}^{\epsilon,\varepsilon}\psi_\alpha^2,\ R_\varphi^{\epsilon,\varepsilon}:=\sum_\alpha R_{\varphi,_{V_\alpha}}^{\epsilon,\varepsilon}\psi_\alpha^2,\ \Psi:=\sum_\alpha \psi_\alpha^2,\ \lambda:=\sum_\alpha\lambda_{_{W_\alpha}}\tilde{\psi}_\alpha^2.
	\end{align*}
	In view of (\ref{e3}), it remains to show that we can select a convex function $\chi\in C^\infty(\mathbb{R})$ increasing so rapidly that
	\begin{align}\label{e4}
		\left\{
		\begin{aligned}
			&\chi'(\varphi)\mu^{\epsilon,\varepsilon}+C_{\epsilon}\Psi\geq\Psi,\\
			&\chi'(\varphi)R_\varphi^{\epsilon,\varepsilon}+\chi''(\varphi)\Psi|X_{\varphi}(\varphi)|^2+C_{\epsilon}\Psi\geq\Psi,\\
			&\chi'(\varphi)\lambda+C_{\epsilon}(1-\Psi)\geq1-\Psi.
		\end{aligned}
		\right.
	\end{align}
	
	Since $\varphi$ is an exhaustion function, $M_t:=\{\varphi< t\}\subset\subset M$ for any $t\in\mathbb{R}$. Let
	\begin{align*}
		t_0:=\inf_{\{\Psi\neq0\}}\varphi({\lmfp}),
	\end{align*}
	then $\overline{M}_{t_0}\cap{\rm supp}\Psi\neq\emptyset$, $\overline{M}_{t}\cap{\rm supp}\Psi=\emptyset$ if $t<t_0$ and ${M}_{t}\cap\{\Psi\neq0\}\neq\emptyset$ if $t>t_0$. Assuming without loss of generality that $t_0<+\infty$ (i.e., $\Psi\not\equiv0$), we can define the following functions on $t\geq t_0$:
	\begin{align*}
		\mu(t):=\sup\limits_{M_{t+1}\cap\{\Psi\neq0\}}\frac{(1-C_{\epsilon})\Psi}{\mu^{\epsilon,\varepsilon}},\ \ 
		R(t):=\sup\limits_{M_{t+1}\cap\{\Psi\neq0\}}\frac{1-R_\varphi^{\epsilon,\varepsilon}/\Psi}{|X_{\varphi}(\varphi)|^2},
		\ \ 
		C(t):=\max_{M_{t+1}}(1-C_{\epsilon}).
	\end{align*}
	Indeed, for any $t\geq t_0$, there exist only finitely many open sets $V_{\alpha_1},\cdots,V_{\alpha_s}\subset\subset M\setminus \mathcal{C}_\varphi$ which have non-empty intersection with $M_{t+1}$, then for ${\lmfp}\in M_{t+1}\cap\{\Psi\neq0\}$
	\begin{align}\label{Psi1}
		\frac{\Psi({\lmfp})}{\mu^{\epsilon,\varepsilon}({\lmfp})}=\frac{\sum_{\iota=1}^s\psi_{\alpha_\iota}^2({\lmfp})}{\sum_{\iota=1}^s\mu_{_{V_{\alpha_\iota}}}^{\epsilon,\varepsilon}\psi_{\alpha_\iota}^2({\lmfp})}\leq\frac{\sum_{\iota=1}^s\psi_{\alpha_\iota}^2({\lmfp})}{\sum_{\iota=1}^s\mu_{\min}^{\iota}\psi_{\alpha_\iota}^2({\lmfp})}\leq\frac{1}{\min\{\mu_{\min}^1,\cdots,\mu_{\min}^s\}},
	\end{align}
	where $\mu_{\min}^\iota:=\min_{{\rm supp}\psi_{\alpha_\iota}}{\mu}_{_{V_{\alpha_\iota}}}^{\epsilon,\varepsilon}>0$, which yields that $\mu(t)<+\infty$ for $t\geq t_0$.
	
	Similarly, there is a constant $c>0$ such that
	\begin{align}\label{R1}
		\left|\frac{R_\varphi^{\epsilon,\varepsilon}({\lmfp})}{\Psi({\lmfp})}\right|<c,\ {\lmfp}\in M_{t+1}\cap\{\Psi\neq0\}.
	\end{align}
	Since $V_{\alpha_1},\cdots,V_{\alpha_s}\subset\subset M\setminus \mathcal{C}_\varphi$, 
	\begin{align*}
		\inf_{M_{t+1}\cap\{\Psi\neq0\}}|X_{\varphi}(\varphi)|^2\geq\inf_{M_{t+1}\cap(\cup_{\iota=1}^sV_{\alpha_\iota})}|X_{\varphi}(\varphi)|^2>0,
	\end{align*}
	which combined with (\ref{R1}), leads to $R(t)<+\infty$ for any $t\geq t_0$.
	
	By exactly the same reasoning as in (\ref{Psi1}), we can define a function on $t\geq t_1$ as follows
	\begin{align*}
		\lambda(t):=\sup\limits_{M_{t+1}\cap\{1-\Psi\neq0\}}\frac{(1-C_{\epsilon})(1-\Psi)}{\lambda}<+\infty,
	\end{align*}
	where $t_1:=\inf_{\{1-\Psi\neq0\}}\varphi({\lmfp})$.
	
	We then choose $\chi\in C^\infty(\mathbb{R})$ increasing rapidly such that
	\begin{align*}
		\left\{
		\begin{aligned}
			&\chi'(t)\geq\max\{\mu(t),C(t)\},&\ t\geq t_0,\\
			&\chi''(t)/\chi'(t)\geq R(t), &\ t\geq t_0,\\
			&\chi'(t)\geq\lambda(t), &\ t\geq t_1,
		\end{aligned}
		\right.
	\end{align*}
	thereby resulting in (\ref{e4}). In fact, if ${\lmfp}\in\{\Psi\neq0\}$, then ${\lmfp}\in M_{t+1}\cap\{\Psi\neq0\}$ for $t:=\varphi({\lmfp})\geq t_0$. The inequality $\chi'(t)\geq\mu(t)$ for $t\geq t_0$ gives
	\begin{align*}
		\chi'(\varphi({\lmfp}))\geq\sup_{M_{t+1}\cap\{\Psi\neq0\}}\frac{(1-C_{\epsilon})\Psi}{\mu^{\epsilon,\varepsilon}}\geq\frac{(1-C_{\epsilon}({\lmfp}))\Psi({\lmfp})}{\mu^{\epsilon,\varepsilon}({\lmfp})}.
	\end{align*}
	Hence, 
 	\begin{align*}
		\chi'(\varphi({\lmfp}))\mu^{\epsilon,\varepsilon}({\lmfp})\geq(1-C_{\epsilon}({\lmfp}))\Psi({\lmfp}),\ {\lmfp}\in\{\Psi\neq0\},
	\end{align*}
	it's exactly the first inequality in (\ref{e4}) since $\mu^{\epsilon,\varepsilon}(\mfp)=0$ if $\Psi(\mfp)=0$. The last inequality in (\ref{e4}) holds in the same manner. For the second one, if ${\lmfp}\in\{\Psi\neq0\}$, then ${\lmfp}\in M_{t+1}\cap\{\Psi\neq0\}$ for $t:=\varphi({\lmfp})\geq t_0$. By $\chi''(t)/\chi'(t)\geq R(t)$ for $t\geq t_0$ we have
	\begin{align*}
		\frac{\chi''(\varphi({\lmfp}))}{\chi'(\varphi({\lmfp}))}\geq\sup_{M_{t+1}\cap\{\Psi\neq0\}}\frac{1-R_\varphi^{\epsilon,\varepsilon}/\Psi}{|X_{\varphi}(\varphi)|^2}\geq\frac{1-R_\varphi^{\epsilon,\varepsilon}({\lmfp})/\Psi({\lmfp})}{|X_{\varphi}(\varphi)({\lmfp})|^2},
	\end{align*}
	which implies
	\begin{align}\label{c1}
		\frac{R_\varphi^{\epsilon,\varepsilon}({\lmfp})}{\Psi({\lmfp})}+\frac{\chi''(\varphi({\lmfp}))}{\chi'(\varphi({\lmfp}))}|X_{\varphi}(\varphi)({\lmfp})|^2\geq1,\ {\lmfp}\in\{\Psi\neq0\}.
	\end{align}
	In view of $\chi'(t)\geq C(t)$ for $t\geq t_0$,
	\begin{align}\label{c2}
		\chi'(\varphi({\lmfp}))\geq1-C_{\epsilon}({\lmfp}),\ {\lmfp}\in\{\Psi\neq0\}.
	\end{align}
	Multiplying (\ref{c1}) and (\ref{c2}) yields
	\begin{align*}
		\chi'(\varphi({\lmfp}))R_\varphi^{\epsilon,\varepsilon}({\lmfp})+\chi''(\varphi({\lmfp}))\Psi({\lmfp})|X_{\varphi}(\varphi)({\lmfp})|^2\geq(1-C_{\epsilon}({\lmfp}))\Psi({\lmfp}),\ {\lmfp}\in\{\Psi\neq0\},
	\end{align*}
	which gives the second inequality in (\ref{e4}) since $R_\varphi^{\epsilon,\varepsilon}({\lmfp})=0$ if $\Psi(\lmfp)=0$.
	
	Due to (\ref{e3}) and (\ref{e4}), for any $g\in  C^\infty_c(M,\Lambda_\MV^{m,q'}(E))$ we obtain
	\begin{align*}
		\|g\|_{\chi(\varphi)}^2\leq\|T^*g\|_{\chi(\varphi)}^2+\|Sg\|_{\chi(\varphi)}^2,
	\end{align*}
where $S,T$ are defined by (\ref{Ope.}). We may assume that $f\in L_{\chi(\varphi)}^2(M,\Lambda_\MV^{m,q'}(E))$, by Lemma \ref{dl.} and Lemma \ref{functional analysis}, there exists some $u\in L_{\chi(\varphi)}^2(M,\Lambda_\MV^{m,q'-1}(E))$ such that $\dmv u=f$. 
%
\end{proof}

If $\MV=\mathbb{C}TM$, then a basic vector bundle $E$ over $(M,\MV)$ is a flat vector bundle (see (iv) in Example \ref{exa}). Recently, Deng-Zhang (see \cite{DZ21}) established a fundamental relationship between $L^2$-estimates and the curvature positivity of Riemannnian metrics on flat vector bundles. As an application of Theorem \ref{t1'}, we have the following local existence result.
\begin{cor}\label{local existence}
	Let $\MV$ be a Levi flat structure on a smooth manifold $M$, $E$ a basic vector bundle over $(M,\MV)$, and let $\vartheta$ be a global smooth $1$-form satisfying $(\ref{mnf})$. Given a point ${\lmfp}\in M$, there is a neighborhood $U$ around ${\lmfp}$ such that, for any $f\in L_{loc}^2(U,\Lambda_\MV^{m,q}(E))$ with $ \dmv f=0 $, there exists some $u\in L_{loc}^2(U,\Lambda_\MV^{m,q-1}(E))$ such that $ \dmv u=f $ for all $1\leq q\leq n$.
\end{cor}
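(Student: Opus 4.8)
The strategy is to deduce the statement from Theorem \ref{t1'}: around the given point $\lmfp$ I will produce a small neighborhood $U$ equipped with a smooth $1$-convex exhaustion function with respect to $\MV$. Granting this, Theorem \ref{t1'}, applied on $U$ to the Levi flat structure $\MV|_{_U}$, the basic vector bundle $E|_{_U}$ and the restriction of $\vartheta$ (which still satisfies (\ref{mnf})) with $q=1$, furnishes for every $\dmv$-closed $f\in L^2_{loc}(U,\Lambda_\MV^{m,q'}(E))$ a solution $u$ of $\dmv u=f$ in all degrees $q'\geq1$, that is, for every $1\leq q\leq n$. Since $\dmv$ is a first-order local operator, its restriction to $U$ is unambiguous, so this is exactly the assertion of the corollary.

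To build the function, I invoke the complex Frobenius theorem (\cite{Nl57}) to obtain a coordinate chart $(U_0;z,y,t)$ centered at $\lmfp$, with $d=m-{\rm rank}_\mathbb{C}(N^*\MV\cap\overline{N^*\MV})$, in which $\MV$ is spanned by the commuting frame $\{\partial/\partial\bar z_\varrho,\partial/\partial t_\tau\}$; here $\MV\cap\overline{\MV}$ is spanned by the real fields $\{\partial/\partial t_\tau\}_{\tau=1}^{n-d}$. I will choose $r>0$ so small that the coordinate ball $U:=\{|x|^2<r^2\}$ with $|x|^2=|z|^2+|y|^2+|t|^2$ is relatively compact in $U_0$, and set $\varphi=\rho(|x|^2)$ with $\rho(s)=(r^2-s)^{-1}$. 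Then $\rho',\rho''>0$ on $[0,r^2)$ and $\rho(s)\to+\infty$ as $s\to r^2$, so $\varphi$ is an exhaustion function on $U$.

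It then remains to verify $1$-convexity. Since $\partial\varphi/\partial t_\tau=2\rho'(|x|^2)\,t_\tau$ and $\rho'>0$, the relevant locus is $\mathcal{K}_\varphi=\{t=0\}$. As the Frobenius frame commutes, one may take the structure coefficients $e_{jk}^l$ in (\ref{cbl}) to vanish, so at $\lmfp\in\mathcal{K}_\varphi$ the quadratic form reduces to the leafwise complex Hessian (\ref{qoe.}); using $\partial^2\varphi/\partial z_\varrho\partial t_\tau=0$ on $\{t=0\}$, a direct computation gives, for $\xi=\sum_\varrho a_\varrho\,\partial/\partial\bar z_\varrho+\sum_\tau b_\tau\,\partial/\partial t_\tau\in\MV_{\mfp}$,
\begin{align*}
	Q_{\varphi,\mfp}(\xi,\xi)=\rho''(|x|^2)\Big|\sum_\varrho z_\varrho a_\varrho\Big|^2+\rho'(|x|^2)\sum_\varrho|a_\varrho|^2+2\rho'(|x|^2)\sum_\tau|b_\tau|^2.
\end{align*}
Since $\rho',\rho''>0$, this form is positive definite on all of $\MV_{\mfp}$, hence in particular on $\MV_{\mfp}\cap{\rm Ker}(\rd\varphi)$; by Definition \ref{qpqc}$(ii)$ this is precisely the $1$-convexity of $\varphi$ with respect to $\MV$.

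I expect the single substantive step to be the diagonalization of $Q_{\varphi,\mfp}$ displayed above; the reason it should cause no real difficulty is structural rather than technical: formula (\ref{qoe.}) only involves the second derivatives of $\varphi$ along the foliation directions $z$ and $t$, and is insensitive to the transverse directions $y$. Consequently any strictly increasing convex function of $|x|^2$ has positive definite leafwise Hessian along $\mathcal{K}_\varphi$, and the remaining points—that $\varphi$ exhausts $U$, that the data restrict to $U$, and that Theorem \ref{t1'} then applies—are routine.
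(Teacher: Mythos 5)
Your proof is correct and follows essentially the same route as the paper: both reduce the corollary to Theorem \ref{t1'} by exhibiting, on a small coordinate ball around $\lmfp$, a smooth $1$-convex exhaustion function obtained by composing a strictly increasing convex function blowing up at the boundary with $|x|^2$ (the paper uses $-\log(\epsilon^2-|x|^2)$, you use $(r^2-|x|^2)^{-1}$). The only cosmetic difference is that you verify the convexity in the Frobenius normal coordinates with the commuting frame (so $e_{jk}^l=0$ and $Q_{\varphi,\mfp}$ diagonalizes explicitly), whereas the paper works with an arbitrary chart and frame and shrinks the radius to preserve positivity; both verifications are valid.
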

\begin{proof}
	For any point ${\lmfp}\in M$, there is a local coordinate chart $(x,U)$ with $x({\lmfp})=0$, where $U$ is an open ball centered at ${\lmfp}$ of radius $\epsilon$. Define
	\begin{align*}
		\phi({\lmfp}'):=|x({\lmfp'})|^2\in C^\infty(U),
	\end{align*}
	which implies that ${\lmfp}\in\mathcal{K}_\phi$. Given the coefficients $e_{jk}^l$ in (\ref{cbl}) and a smooth frame $\{X_1,\cdots,X_n\}$ of $\MV$ over $U$, where $X_j:=\sum_{\nu=1}^{m+n}a_j^\nu\partial_{x_\nu}$ $(1\leq j\leq n)$, the linear independence of $X_1,\cdots,X_n$ at any $\lmfp'\in U$ allows us to select the radius $\epsilon$ sufficiently small such that
	\begin{equation*}
		{\rm Re}\sum_{j,k=1}^n\bigg(X_j\bar{X}_k\phi({\lmfp}')+\sum_{l=1}^ne_{jk}^l({\lmfp}')\bar{X}_l\phi({\lmfp}')\bigg)\xi_{j}\bar{\xi}_{k}=\sum_{\nu=1}^{m+n}\left|\sum_{j=1}^na_j^\nu({\lmfp}')\xi_j\right|>0
	\end{equation*}
	at each ${\lmfp}'\in U$. By (\ref{vt3}) we know that
	\begin{align*}
		\varphi({\lmfp}'):=-\log(\epsilon^2-|x({\lmfp}')|^2)
	\end{align*}
	is a smooth exhaustion function on $U$ satisfying
	\begin{align*}
		{\rm Re}\sum_{j,k=1}^n\bigg(X_j\bar{X}_k\varphi({\lmfp}')+\sum_{l=1}^ne_{jk}^l({\lmfp}')\bar{X}_l\varphi({\lmfp}')\bigg)\xi_{j}\bar{\xi}_{k}>0,\ {\lmfp}'\in U.
	\end{align*}
	Hence, any point ${\lmfp}\in M$ possesses an open neighborhood with a smooth $1$-convex exhaustion function with respect to $\MV$. The proof is thus completed by Theorem \ref{t1'}.
\end{proof}

Clearly, the following conclusion is an immediate application of Corollary \ref{local existence}.
\begin{cor}\label{ros}
	Let $\MV$ be a Levi flat structure on a smooth manifold $M$, $E$ a basic vector bundle over $(M,\MV)$, and let $\vartheta$ be a global smooth $1$-form satisfying $(\ref{mnf})$. The complex $(\ref{complex1})$ is exact, and replacing $E$ with $K_\MV^{-1}\otimes E$ in $(\ref{complex1})$ yields a fine resolution of $\mathcal{S}_\MV(E)$ when $\vartheta\equiv0$, and for any $0\leq q\leq n$
	\begin{align*}
		H^q(M,\mathcal{S}_\MV (E))\cong\frac{{\rm{Ker}}\left(L_{loc}^2\big(M,\Lambda_\MV^{m,q}(K_\MV^{-1}\otimes E)\big)\stackrel{\dV}{\longrightarrow}L_{loc}^2\big(M,\Lambda_\MV^{m,q+1}(K_\MV^{-1}\otimes E)\big)\right)}{{\rm{Im}}\left(L_{loc}^2\big(M,\Lambda_\MV^{m,q-1}(K_\MV^{-1}\otimes E)\big)\stackrel{\dV}{\longrightarrow}L_{loc}^2\big(M,\Lambda_\MV^{m,q}(K_\MV^{-1}\otimes E)\big)\right)}.
	\end{align*}
\end{cor}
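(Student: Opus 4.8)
The plan is to read Corollary \ref{ros} as an application of the abstract de Rham theorem, so that the argument reduces to three points: exactness of the sheaf complex (\ref{complex1}), identification of its kernel in degree zero after the twist by $K_\MV^{-1}$, and fineness of the terms.

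For exactness, note first that $\dmv^2=0$ together with the locality of $\dmv$ makes (\ref{complex1}) a complex of sheaf morphisms, so $\mathrm{Im}\,\dmv\subseteq\mathrm{Ker}\,\dmv$ at every stage. The reverse inclusion at the stages $1\le q\le n$ is exactly Corollary \ref{local existence}: every point of $M$ has a neighborhood $U$ on which each $\dmv$-closed $f\in L^2_{loc}(U,\Lambda_\MV^{m,q}(E))$ is of the form $\dmv u$. Passing to stalks, this is precisely exactness of the sheaf complex at $\mathcal{L}_\MV^{m,q}(E)$, which proves that (\ref{complex1}) is exact.

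For the resolution, set $\vartheta\equiv 0$ (so $\dmv=\dV$) and replace $E$ by $K_\MV^{-1}\otimes E$. Since $\Lambda_\MV^{m,0}=K_\MV$, one has $\Lambda_\MV^{m,0}(K_\MV^{-1}\otimes E)=K_\MV\otimes K_\MV^{-1}\otimes E=E$, which is the reason for the twist. It then remains to identify the kernel of $\dV$ in degree zero with $\mathcal{S}_\MV(E)$. I would verify this locally on a chart $U$ equipped with the distinguished basic frame $\Theta_U=\theta^1\wedge\cdots\wedge\theta^m$ of $K_\MV$ and a basic frame $e_U'$ of $K_\MV^{-1}\otimes E$. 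Writing a section as $g=\sum_a g^a\,\Theta_U\otimes(e_U')^a$, the defining relations (\ref{inte}) and (\ref{tr}) give the cancellation $(\rd-\Xi_\MV)\Theta_U=0$, whence a short computation yields $\dV g=(-1)^m\sum_{a,j}X_j(g^a)\,\Theta_U\wedge\omega^j\otimes(e_U')^a$. Thus $\dV g=0$ if and only if every coefficient $g^a$ is annihilated by $\MV$, i.e. is a basic function; since $\{\Theta_U\otimes(e_U')^a\}_a$ is a basic frame of $K_\MV\otimes(K_\MV^{-1}\otimes E)=E$, this says exactly that $g$ is an $L^2_{loc}$ basic section of $E$. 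Hence $\mathrm{Ker}\,\dV=\mathcal{S}_\MV(E)$, and combined with the exactness above the augmented sequence $0\to\mathcal{S}_\MV(E)\to\mathcal{L}_\MV^{m,0}(K_\MV^{-1}\otimes E)\to\cdots$ is a resolution.

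Finally, each sheaf $\mathcal{L}_\MV^{m,q}(K_\MV^{-1}\otimes E)$ is a module over the sheaf of smooth functions on $M$: multiplication by a smooth function preserves local square-integrability and does not enlarge supports. Hence these sheaves admit smooth partitions of unity, are fine, and are therefore acyclic on the paracompact manifold $M$. The abstract de Rham theorem then computes $H^q(M,\mathcal{S}_\MV(E))$ as the cohomology of the complex of global sections $\Gamma(M,\mathcal{L}_\MV^{m,\bullet}(K_\MV^{-1}\otimes E))=L^2_{loc}(M,\Lambda_\MV^{m,\bullet}(K_\MV^{-1}\otimes E))$, which is the stated isomorphism. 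I expect the only step requiring genuine (if brief) computation to be the degree-zero kernel identification, since it both forces the $K_\MV^{-1}$ twist and relies on the precise cancellation engineered into $\Xi_\MV$; exactness and fineness are then formal consequences of Corollary \ref{local existence} and the module structure, respectively.
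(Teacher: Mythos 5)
Your proposal is correct and follows exactly the route the paper intends: the paper derives Corollary \ref{ros} as an ``immediate application'' of Corollary \ref{local existence}, i.e.\ local solvability gives stalkwise exactness, the $K_\MV^{-1}$ twist together with the cancellation $(\rd-\Xi_\MV)\Theta_U=0$ identifies the degree-zero kernel with $\mathcal{S}_\MV(E)$, and fineness of the $C^\infty$-module sheaves $\mathcal{L}_\MV^{m,q}$ plus the abstract de Rham theorem yields the cohomology isomorphism. Your degree-zero computation is the only nontrivial verification and it is carried out correctly.
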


\begin{remark}
	When $E$ is a trivial line bundle and $\vartheta\equiv0$, the conclusion of Corollary \ref{ros} was established in \cite{JYY22} under the assumption that $\MV$ is a complex Frobenius structure.
\end{remark}

\begin{thm}[=Theorem \ref{ct} for $s=0$]\label{ct'}
	Let $\MV$ be a Levi flat structure on a compact manifold $M$, $E$ a basic vector bundle over $(M,\MV)$, and $\vartheta$ a smooth $1$-form on $M$ fulfilling $(\ref{mnf})$. Assume that there exists a $q$-positive basic line bundle $(L,h_{_L})$ with respect to $\MV$. Then for every positive constant  $\delta$, one can find an integer $\tau_{\delta}>0$ such that for any $f\in L^2\big(M,\Lambda_\MV^{m,q'}(L^{\tau}\otimes E)\big)$ with $ \dmv f=0 $, there exists some $u\in L^2\big(M,\Lambda_\MV^{m,q'-1}(L^{\tau}\otimes E)\big)$ satisfying $ \dmv u=f $ and $\|u\|\leq\delta\|f\|$ for $q'\geq q$ and $\tau\geq \tau_{\delta}$.
\end{thm}

\begin{proof}
	From the local Bochner formula (\ref{be.}) for the pair $(L\otimes E,h_{_L}h_{_E})$, where the metric $h_{_L}$ is given by $\{(U_\alpha,\phi_\alpha)\}_\alpha$ satisfying (\ref{lm.}), in conjunction with the $q$-positivity of $(L,h_{_L})$, Lemma \ref{evl} (ii) yields a positive function $\lambda_{\alpha}^h\in C^0(U_\alpha)$ such that, for any $g=g_{\alpha}\otimes\sigma_\alpha\otimes e_{\alpha}\in  C^\infty_c\big(U_\alpha,\Lambda_\MV^{m,q'}(L\otimes E)\big)$,
	\begin{align*}
		&\|\dmv g\|^2+\|\delta_{\MV,\vartheta} g\|^2\\
		\geq&\int_{U_\alpha}Q_{h_L}^\alpha(g_{\alpha},g_{\alpha})e^{-\phi_{\alpha}}+\int_{U_\alpha}C_{\alpha}|g_{\alpha}|^2e^{-\phi_{\alpha}}+\frac{1}{2}\sum_{a=1}^{r}\sum_{|J|=q}\sum_{j=1}^{n}\int_{U_\alpha}|X_j^{\alpha} g_{\alpha,J}^a|^2e^{-\phi_{\alpha}}\nonumber\\
		\geq&\int_{U_\alpha}\lambda_{\alpha}^h|g|^2+\int_{U_\alpha}C_{\alpha}|g|^2+\frac{1}{2}\int_{U_\alpha}|\nabla_\MV^{\alpha}g|^2,
	\end{align*}
	where $C_{\alpha}$ is a continuous function on $U_\alpha$ independent of $g,\phi_\alpha$, the notation $Q_{h_L}^\alpha(g_{\alpha},g_{\alpha})$ is defined analogously to (\ref{QU}), and
	\begin{align*}
		|\nabla_\MV^{\alpha}g|^2:=\sum_{a=1}^{r}\sum_{|J|=q}\sum_{j=1}^{n}|X_j^{\alpha} g_{\alpha,J}^a|^2e^{-\phi_{\alpha}}.
	\end{align*}
	For $\tau\in\mathbb{Z}_{>0}$, the Hermitian metric $h_{_L}^\tau$ on the basic line bundle $L^\tau$ is given by $\{(U_\alpha,\tau\phi_\alpha)\}_\alpha$, then
	\begin{align*}
		Q_{h_L^\tau}^\alpha(g_{\alpha},g_{\alpha})=\tau Q_{h_L}^\alpha(g_{\alpha},g_{\alpha})\ \text{on}\ U_\alpha.
	\end{align*}
	Thus, for any $g=g_{\alpha}\otimes\sigma_\alpha^\tau\otimes e_{\alpha}\in  C^\infty_c\big(U_\alpha,\Lambda_\MV^{m,q'}(L^{\tau}\otimes E)\big)$,
	\begin{align}\label{ce2}
		\|\dmv g\|^2+\|\delta_{\MV,\vartheta} g\|^2\geq\int_{U_\alpha}\tau\lambda_{\alpha}^h|g|^2+\int_{U_\alpha}C_{\alpha}|g|^2+\frac{1}{2}\int_{U_\alpha}|\nabla_\MV^{\alpha}g|^2.
	\end{align}
	
	Let $\{\psi_\alpha\}_\alpha$ be a smooth partition of unity subordinate to $\{U_\alpha\}_\alpha$ such that $\psi_\alpha\in C_c^\infty(U_\alpha)$ and $\sum_{\alpha}\psi_\alpha^2=1$ on $M$. For any $g\in  C^\infty\big(M,\Lambda_\MV^{m,q'}(L^{\tau}\otimes E)\big)$, applying (\ref{ce2}) to $\psi_\alpha g$ and summing over $\alpha$ yields
	\begin{align}\label{ep}
		\|\dmv g\|^2+\|\delta_{\MV,\vartheta} g\|^2\geq&\int_{M}(\tau\lambda-C)|g|^2+\frac{1}{2}\int_{M}|\nabla_\MV g|^2,
	\end{align}
	where $C$ is a positive constant, $\lambda$ is the minimum of the positive function $\sum_\alpha\psi_\alpha^2\lambda_{\alpha}^h$ on the compact manifold $M$, and
	\begin{align}\label{nabla.}
		 |\nabla_\MV g|^2:=\sum_\alpha|\nabla_\MV^{\alpha}\psi_\alpha g|^2.
	\end{align}
	We define
	\begin{align*}
		\tau_{\delta}=\frac{C+\delta^2}{\lambda},
	\end{align*}
	then Lemma \ref{dl.} implies that
	\begin{align}\label{priori.}
		\|g\|^2\leq \delta^2\big(\|\dmv^{\tau*}g\|^2+\|\dmv g\|^2\big),\quad \forall g\in {\rm Dom}(\dmv^{\tau*})\cap{\rm Dom}(\dmv),
	\end{align}
	where $\dmv^{\tau*}$ is the Hilbert adjoint of the differential operator $\dmv:L^2\big(M,\Lambda_\MV^{m,q'}(L^{\tau}\otimes E)\big)\rightarrow L^2(M,\Lambda_\MV^{m,q'+1}(L^{\tau}\otimes E))$ for $\tau\geq\tau_{\delta}$. The $L^2$-solvability of the equation $\dmv u=f$ and the estimate of the solution follow from Lemma \ref{functional analysis}.
\end{proof}

A formally integrable structure $\MV$ is called a \emph{CR structure} if $\MV\cap\overline{\MV}=0$. There have been remarkable developments and numerous significant applications in the study of CR structures (see \cite{BHR96}, \cite{FH18}, \cite{HY25},\cite{HZ25} and references therein). If $\MV$ further defines a Levi flat CR structure of corank $(n+1)$ over a smooth manifold $M$ of dimension $(2n+1)$, then $M$ is called a \emph{Levi flat CR manifold}, i.e., foliated by complex submanifolds of real codimension one. Ohsawa-Sibony in \cite{OS00} established $L^2$-existence theorems on compact oriented Levi flat CR manifolds.

Theorem \ref{t1'} and Theorem \ref{ct'}, together with Corollary \ref{ros} yield the vanishing of sheaf cohomologies by letting $\vartheta\equiv0$.
\begin{cor}[=Corollary \ref{local existence1}]
	Let $\MV$ be a Levi flat structure on a smooth manifold $M$, and $E$ a basic vector bundle over $(M,\MV)$. 
	\begin{enumerate}
		\item[$(i)$] If there exists a smooth $q$-convex exhaustion function with respect to $\MV$ on $M$, then $H^{q'}(M,\mathcal{S}_\MV (E))=0$ for $q'\geq q$.
		\item[$(ii)$] If $M$ is compact and $(L,h_{_L})$ is a $q$-positive basic line bundle with respect to $\MV$, then there exists an integer $\tau_0>0$ such that $H^{q'}(M,\mathcal{S}_\MV (L^{\tau}\otimes E))=0$ for $q'\geq q$ and $\tau\geq \tau_0$.
	\end{enumerate}
\end{cor}

By the complexification technique, S. Mongodi and G. Tomassini (see \cite{MT16}) established a vanishing theorem for $H^q(M,\mathcal{O}_\MV)$ for every $q\geq1$ under the following conditions:
\begin{enumerate}
	\item[$(i)$] $\mathcal{V}$ defines a real analytic Levi flat CR structure of corank $m=n+1$, i.e., $M$ is a real analytic manifold foliated by complex submanifolds of real codimension one.
	\item[$(ii)$] There exists a smooth exhaustion function which is strictly plurisubharmonic along the leaves.
	\item[$(iii)$] The transverse bundle $N_{\rm tr}$ to the leaves of $M$ is positive.
\end{enumerate}

\subsection{Sobolev regularity on compact manifolds}
For any $\delta>0$ and $\tau\geq\tau_{\delta}$ ($\tau_{\delta}$ is the constant in Theorem \ref{ct'}) and $q'\geq q$, the a priori estimate (\ref{priori.}) deduces that $\Delta_{\MV,\vartheta}^{\tau}:=\dmv\dmv^{\tau*}+\dmv^{\tau*}\dmv$ is injective, then
\begin{align*}
	L^2\big(M,\Lambda_\MV^{m,q'}(L^{\tau}\otimes E)\big)={\rm Im}(\Delta_{\MV,\vartheta}^{\tau}).
\end{align*}
Hence, we can consider the Green operator
\begin{align*}
	N^\tau:L^2\big(M,\Lambda_\MV^{m,q'}(L^{\tau}\otimes E)\big)\rightarrow{\rm Dom}(\Delta_{\MV,\vartheta}^{\tau}).
\end{align*}
which is the determined by
\begin{align*}
	N^\tau\Delta_{\MV,\vartheta}^{\tau}={\rm Id}\ \text{on}\ {\rm Dom}(\Delta_{\MV,\vartheta}^{\tau})\ \text{and}\ \Delta_{\MV,\vartheta}^{\tau}N^\tau={\rm Id}\ \text{on}\ L^2\big(M,\Lambda_\MV^{m,q'}(L^{\tau}\otimes E)\big).
\end{align*}
For any $f\in L^2\big(M,\Lambda_\MV^{m,q'}(L^{\tau}\otimes E)\big)$, we have
\begin{align}\label{boxe}
	\Delta_{\MV,\vartheta}^{\tau}N^\tau f=f.
\end{align}
Suppose that $\dmv f=0$, then (\ref{boxe}) reduces to
\begin{align*}
	\dmv\dmv^{\tau*}N^\tau f=f,
\end{align*}
where the solution
\begin{align*}
	u_{\rm can}:=\dmv^{\tau*}N^\tau f
\end{align*}
is called the \emph{canonical solution} of the equation $\dmv u=f$. 

We will apply the $q$-positivity to establish the Sobolev regularity of the canonical solution, via the elliptic regularization technique developed by Kohn-Nirenberg \cite{KN65}; for the case of Levi flat CR structures, see \cite{OS00} and \cite{HM17}. Denote by $W^{s}\big(M,\Lambda_\MV^{m,q'}(L^{\tau}\otimes E)\big)$  the completion of $C^\infty\big(M,\Lambda_\MV^{m,q'}(L^{\tau}\otimes E)\big)$ under the Sobolev norm $\|\cdot\|_s$.

\begin{thm}[=Theorem \ref{ct} for $s\in\mathbb{Z}_{>0}$]\label{ct''}
	Let $\MV$ be a Levi flat structure on a compact manifold $M$, $E$ a basic vector bundle over $(M,\MV)$, and $\vartheta$ a smooth $1$-form on $M$ fulfilling $(\ref{mnf})$.  Assume that there exists a $q$-positive basic line bundle $(L,h_{_L})$ with respect to $\MV$. Then for every $s\in\mathbb{Z}_{>0}$ and every positive constant $\delta$, there is a positive integer $\tau_{s,\delta}$ such that for any $\tau\geq \tau_{s,\delta}$, $q'\geq q$  and any $f\in W^{s}\big(M,\Lambda_\MV^{m,q'}(L^{\tau}\otimes E)\big)$ with $ \dmv f=0 $, the canonical solution  of  $ \dmv u=f $ satisfies $\|u_{\rm can}\|_s\leq \delta\|f\|_s$.
\end{thm}

\begin{proof}
	We first show that for any $s\in\mathbb{Z}_{>0}$ and any constant $\delta>0$, there exist a positive integer $\tau_{s,\delta}$ such that the following Sobolev estimate
	\begin{align}\label{sob.}
		\|\dmv v\|_s^2+\|\dmv^{\tau*}v\|_s^2+\|v\|_s^2\leq\delta^2\|\Delta_{\MV,\vartheta}^{\tau}v\|_s^2,\quad\forall v\in C^\infty\big(M,\Lambda_\MV^{m,q'}(L^{\tau}\otimes E)\big),
	\end{align}
	holds if $\tau\geq\tau_{s,\delta}$. We will adopt the convention that a constant $C>0$ may change its value at successive appearances, and use subscripts to denote dependence on other parameters. Clearly, $(\Delta_{\MV,\vartheta}^{\tau}v,v)=\|\dmv v\|^2+\|\dmv^{\tau*}v\|^2$, then for any $0<\epsilon<1$,
	\begin{align*}
		\|\Delta_{\MV,\vartheta}^{\tau}v\|^2\geq\frac{1}{\epsilon}\big(\|\dmv v\|^2+\|\dmv^{\tau*}v\|^2\big)-\frac{1}{4\epsilon^2}\|v\|^2.
	\end{align*}
	On the other hand, from (\ref{ep}) we obtain
	\begin{align}\label{es0}
		\|\dmv v\|^2+\|\dmv^{\tau*}v\|^2\geq(\lambda\tau-C)\|v\|^2+\frac{1}{2}\int_{M}|\nabla_\MV v|^2
	\end{align}
	where $\lambda>0$ and $|\nabla_\MV v|^2$ is defined by (\ref{nabla.}). Hence,
	\begin{align}\label{es1}
		\|\Delta_{\MV,\vartheta}^{\tau}v\|^2\geq\frac{1}{2\epsilon}\big(\|\dmv v\|^2+\|\dmv^{\tau*}v\|^2\big)+\left(\frac{\lambda\tau-C}{2\epsilon}-\frac{1}{4\epsilon^2}\right)\|v\|^2.
	\end{align}
	Let $\{(U_\alpha;x_1^\alpha,\cdots,x_{m+n}^\alpha)\}_\alpha$ be a finite covering of $M$ by local coordinate charts, which are given by the complex Frobenius theorem (see \cite{Nl57}), then
	\begin{align*}
		\MV|_{_{U_\alpha}}\ \text{is spanned by}\ \left\{\frac{\partial}{\partial\bar{z}_1^\alpha},\cdots,\frac{\partial}{\partial\bar{z}_d^\alpha},\frac{\partial}{\partial x_{m+d+1}^\alpha},\cdots,\frac{\partial}{\partial x_{m+n}^\alpha}\right\}.
	\end{align*}
	where $d:=m-{\rm rank}_\mathbb{C}(N^*\MV\cap\overline{N^*\MV})$, and
	\begin{align*}
		z^\alpha=(x_1^\alpha+\sqrt{-1}x_{1+d}^\alpha,\cdots, x_{d}^\alpha+\sqrt{-1}x_{2d}^\alpha). 
	\end{align*}
	Let $\{\psi_\alpha\}_\alpha$ be a partition of unity subordinate to $\{U_\alpha\}_\alpha$, then
	\begin{align*}
		\|u\|_s^2\sim\sum_{\alpha}\sum_{|I|\leq s}\|D_\alpha^I\psi_\alpha u\|^2,\quad u\in C^\infty\big(M,\Lambda_\MV^{m,q'}(L^{\tau}\otimes E)\big),
	\end{align*}
	where $D_\alpha^I:=(-\sqrt{-1})^{|I|}\partial_{x_{1}^{_\alpha}}^{^{i_1}}\cdots\partial_{x_{m+n}^{_\alpha}}^{^{i_{m+n}}}$. If $I$ is an $(n+d)$-tuple we define
	\begin{align*}
		D_{\MV,\alpha}^I:=(-\sqrt{-1})^{|I|}\partial_{x_{1}^{_\alpha}}^{^{i_1}}\cdots\partial_{x_{2d}^{_\alpha}}^{^{i_{2d}}}\partial_{x_{m+d+1}^{_\alpha}}^{^{i_{2d+1}}}\cdots\partial_{x_{m+n}^{_\alpha}}^{^{i_{n+d}}}.
	\end{align*}
	It follows from (\ref{es1}) that for each multi-index $I$ with $|I|\leq s$,
	\begin{align*}
		\|D_\alpha^I\psi_\alpha\Delta_{\MV,\vartheta}^{\tau}v\|^2\geq&\frac{1}{2}\|\Delta_{\MV,\vartheta}^{\tau}D_\alpha^I\psi_\alpha v\|^2-\|[D_\alpha^I\psi_\alpha,\Delta_{\MV,\vartheta}^{\tau}]v\|^2\\
		\geq&\frac{1}{4\epsilon}\big(\|\dmv D_\alpha^I\psi_\alpha v\|^2+\|\dmv^{\tau*}D_\alpha^I\psi_\alpha v\|^2\big)\\
		&+\left(\frac{\lambda\tau-C}{4\epsilon}-\frac{1}{8\epsilon^2}\right)\|D_\alpha^I\psi_\alpha v\|^2-\|[D_\alpha^I\psi_\alpha,\Delta_{\MV,\vartheta}^{\tau}]v\|^2.
	\end{align*}
	Since the coefficients of the differential operator $[\dmv^{\tau*},D_\alpha^I\psi_\alpha]$ are smooth functions, polynomials in $\tau$ of degree at most $1$, and the coefficients of $[\dmv,D_\alpha^I\psi_\alpha]$ are independent of $\tau$ for every multi-index $I$ with $|I|\leq s$, then for any $0<\epsilon'<1/2$,
	\begin{align*}
		\|\dmv D_\alpha^I\psi_\alpha v\|^2+\|\dmv^{\tau*}D_\alpha^I\psi_\alpha v\|^2\geq&(1-2\epsilon')\big(\|D_\alpha^I\psi_\alpha\dmv v\|^2+\|D_\alpha^I\psi_\alpha\dmv^{\tau*} v\|^2\big)\\
		&+\left(1-\frac{1}{2\epsilon'}\right)C_s\tau\|v\|_s^2.
	\end{align*}
	Therefore,
	\begin{align*}
		\|D_\alpha^I\psi_\alpha\Delta_{\MV,\vartheta}^{\tau}v\|^2\geq&\frac{1-2\epsilon'}{4\epsilon}\big(\|D_\alpha^I\psi_\alpha\dmv v\|^2+\|D_\alpha^I\psi_\alpha\dmv^{\tau*}v\|^2\big)+\frac{1-\frac{1}{2\epsilon'}}{4\epsilon}C_s\tau\|v\|_s^2\\
		&+\left(\frac{\lambda\tau-C}{4\epsilon}-\frac{1}{8\epsilon^2}\right)\|D_\alpha^I\psi_\alpha v\|^2-\|[D_\alpha^I\psi_\alpha,\Delta_{\MV,\vartheta}^{\tau}]v\|^2,
	\end{align*}
	which implies
	\begin{align}\label{es2}
		\|\Delta_{\MV,\vartheta}^{\tau}v\|_s^2\geq&\frac{1-2\epsilon'}{4\epsilon}\big(\|\dmv v\|_s^2+\|\dmv^{\tau*}v\|_s^2\big)+\left(\frac{\left(\lambda-\frac{1-2\epsilon'}{2\epsilon'}C_s\right)\tau-C}{4\epsilon}-\frac{1}{8\epsilon^2}\right)\|v\|_s^2\nonumber\\
		&-\sum_\alpha\sum_{|I|\leq s}\|[D_\alpha^I\psi_\alpha,\Delta_{\MV,\vartheta}^{\tau}]v\|^2.
	\end{align}
	Thus it remains to estimate the last term in the right hand side of (\ref{es2}). Note that for every multi-index $I$ with $|I|=s$, the principal part of $[D_\alpha^I\psi_\alpha,\Delta_{\MV,\vartheta}^{\tau}]$ is generated by $\{D_{\MV,\alpha}^ID_\alpha^J\}_{|I|\geq1,|I|+|J|=s+1}$ with coefficients independent of $\tau$, and the coefficients of its lower order parts are smooth functions, polynomials in $\tau$ of degree at most $1$. Given $\psi_\alpha'\in C_c^\infty(U_\alpha)$ satisfying $\psi_\alpha'|_{{\rm supp}\psi_\alpha}\equiv1$ for any $\alpha$, since 
	\begin{align*}
		[D_\alpha^I\psi_\alpha,\Delta_{\MV,\vartheta}^{\tau}]=[D_\alpha^I\psi_\alpha,\Delta_{\MV,\vartheta}^{\tau}]\psi_\alpha',
	\end{align*}
	we have
	\begin{align}\label{es3}
		\|[D_\alpha^I\psi_\alpha,\Delta_{\MV,\vartheta}^{\tau}]v\|^2\leq& C_{s}\sum_{|I|=1,|J|=s}\|D_{\MV,\alpha}^ID_\alpha^J\psi_\alpha'v\|^2+C_{s}\tau\|v\|_s^2\nonumber\\
		\leq&C_{s}\sum_{|J|=s}\big(\|\dmv D_\alpha^J\psi_\alpha'v\|^2+\|\dmv^{\tau*}D_\alpha^J\psi_\alpha'v\|^2\big)+C_{s}\tau\|v\|_s^2\nonumber\\
		\leq&C_{s}\sum_{|J|=s}\big(\|D_\alpha^J\psi_\alpha'\dmv v\|^2+\|D_\alpha^J\psi_\alpha'\dmv^{\tau*}v\|^2\big)+C_{s}\tau\|v\|_s^2\nonumber\\
		\leq&C_{s}\big(\|\dmv v\|_s^2+\|\dmv^{\tau*}v\|_s^2\big)+C_{s}\tau\|v\|_s^2,
	\end{align}
	where the second line is valid in view of (\ref{es0}). Combining (\ref{es2}) and (\ref{es3}) gives
	\begin{align*}
		\|\Delta_{\MV,\vartheta}^{\tau}v\|_s^2\geq&\left(\frac{1-2\epsilon'}{4\epsilon}-C_{s}\right)\big(\|\dmv v\|_s^2+\|\dmv^{\tau*} v\|_s^2\big)\\
		&+\left(\frac{\left(\lambda-\frac{1-2\epsilon'}{2\epsilon'}C_s\right)\tau-C}{4\epsilon}-C_s\tau-\frac{1}{8\epsilon^2}\right)\|v\|_s^2.
	\end{align*}
	Choose $0<\epsilon'<1/2$ sufficiently close to $1/2$ such that
	\begin{align*}
		\lambda-\frac{1-2\epsilon'}{2\epsilon'}C_s>0,
	\end{align*}
	then choose $\epsilon>0$ sufficiently small to satisfy
	\begin{align*}
		\frac{1-2\epsilon'}{4\epsilon}-C_{s}\geq\delta^2\ \text{and}\ \frac{1}{4\epsilon}\left(\lambda-\frac{1-2\epsilon'}{2\epsilon'}C_s\right)-C_s>0.
	\end{align*}
	With these choices, the desired estimate (\ref{sob.}) holds by selecting $\tau_{s,\delta}$ such that 
	\begin{align*}
		\left(\frac{\left(\lambda-\frac{1-2\epsilon'}{2\epsilon'}C_s\right)}{4\epsilon}-C_s\right)\tau_{s,\delta}-\frac{C}{4\epsilon}-\frac{1}{8\epsilon^2}\geq \delta^2.
	\end{align*}
	
	Now we turn to establish the regularity of the canonical solution of the equation $\dmv u=f$. By the standard arguments of smooth approximation for $f$, it suffices to show that for any $s\in\mathbb{Z}_{>0}$, if $q'\geq q$ and $\tau\geq \tau_{s,\delta}$ (without loss of generality we may assume that $\tau_{s,\delta}$ is increasing in $s$ and $\tau_{s,\delta}\geq\tau_{\delta}$ (the constant in Theorem \ref{ct'})),
	\begin{align*}
		\|\dmv^{\tau*}N^\tau g\|_s\leq \delta\|g\|_s,\quad\forall g\in C^\infty\big(M,\Lambda_\MV^{m,q'}(L^{\tau}\otimes E)\big).
	\end{align*}
	For $\varepsilon>0$, we consider the following elliptic differential operator
	\begin{align*}
		\Delta_{\MV,\vartheta}^{\tau,\varepsilon}:=\Delta_{\MV,\vartheta}^{\tau}+\varepsilon\sum_{\alpha}\sum_{\varrho=2d+1}^{m+d}(\partial_{x_{\varrho}^{_\alpha}}{\scriptstyle \circ}\psi_\alpha)_\tau^*\partial_{x_\varrho^{_\alpha}}{\scriptstyle \circ}\psi_\alpha,
	\end{align*}
	where $(\partial_{x_\varrho^{_\alpha}}{\scriptstyle \circ}\psi_\alpha)_\tau^*$ is the formal adjoint of $\partial_{x_\varrho^{_\alpha}}{\scriptstyle \circ}\psi_\alpha$. Since $(\Delta_{\MV,\vartheta}^{\tau,\varepsilon}g,g)\geq(\Delta_{\MV,\vartheta}^{\tau}g,g)$, the estimate (\ref{sob.}) still holds for any $\varepsilon>0$ if we replace $\Delta_{\MV,\vartheta}^{\tau}$ by $\Delta_{\MV,\vartheta}^{\tau,\varepsilon}$ in (\ref{sob.}). Thus, $\Delta_{\MV,\vartheta}^{\tau,\varepsilon}$ is injective and we denote $N^{\tau,\varepsilon}$ by its Green operator. Since $g\in C^\infty\big(M,\Lambda_\MV^{m,q'}(L^{\tau}\otimes E)\big)$, the ellipticity of $\Delta_{\MV,\vartheta}^{\tau,\varepsilon}$ leads to $N^{\tau,\varepsilon} g\in C^\infty\big(M,\Lambda_\MV^{m,q'}(L^{\tau}\otimes E)\big)$, thereby
	\begin{align*}
		\|N^{\tau,\varepsilon} g\|\leq \delta\|g\|,\ \|\dmv N^{\tau,\varepsilon} g\|_s\leq \delta\|g\|_s\ \text{and}\ \|\dmv^{\tau*}N^{\tau,\varepsilon} g\|_s\leq \delta\|g\|_s.
	\end{align*}
	We can then find a weal limit $\tilde{g}$ of $N^{\tau,\varepsilon} g$ when $\varepsilon\rightarrow0$ such that
	\begin{align*}
		\|\tilde{g}\|\leq \delta\|g\|,\ \|\dmv\tilde{g}\|_s\leq \delta\|g\|_s,\ \|\dmv^{\tau*}\tilde{g}\|_s\leq \delta\|g\|_s
	\end{align*}
	and $\Delta_{\MV,\vartheta}^{\tau}\tilde{g}=g$ in the sense of distributions. Thus, $\dmv\tilde{g},\dmv^{\tau*}\tilde{g}\in W^{1}\big(M,\Lambda_\MV^{m,q'}(L^{\tau}\otimes E)\big)$ which gives $\tilde{g}\in{\rm Dom}(\Delta_{\MV,\vartheta}^{\tau})$. Since $\Delta_{\MV,\vartheta}^{\tau}\tilde{g}=g=\Delta_{\MV,\vartheta}^{\tau}N^\tau g$, it follows from the injectivity of $\Delta_{\MV,\vartheta}^{\tau}$ that $\tilde{g}=N^\tau g$. Therefore, $\|\dmv^{\tau*}N^\tau g\|_s=\|\dmv^{\tau*}\tilde{g}\|_s\leq \delta\|g\|_s$.
\end{proof}

\section{The Morse-Novikov-Treves complex}
Let $\MV$ be a formally integrable structure of rank $n$ over an $(m+n)$-dimensional smooth manifold $M$. We introduce the Morse-Novikov-Treves complex associated with $\MV$ and provide a global realization of this complex. The global solvability of this complex is established. We also obtain vanishing results for the leafwise $L_{loc}^2$-cohomology.

\subsection{Global solvability of the Morse-Novikov-Treves complex}\label{global.}
The motivation for introducing $\Xi_\MV$ in the definition of the differential operator $\dmv$ in the complex (\ref{complex1}) is twofold: first, to ensure that ${\rm Ker}\big(\mathcal{L}_\MV^{m,0}(E)\stackrel{\dV }{\longrightarrow}\mathcal{L}_\MV^{m,1}(E)\big)=\mathcal{S}_\MV(K_\MV\otimes E)$ for any basic vector bundle $E$ (when $\vartheta\equiv0$); second, to provide a global realization of the Morse-Novikov-Treves complex as follows.

Let $\vartheta$ be a global smooth $1$-form satisfying (\ref{mnf}). For $q\geq 0$, it follows from the integrability of $\MV$ that
\begin{equation*}
	\rd_\vartheta\mathcal{A}_\MV^{1,q}\subseteq \mathcal{A}_\MV^{1,q+1},
\end{equation*} 
where $\rd_\vartheta:=\rd-\vartheta\wedge$ and $\mathcal{A}_\MV^{1,*}$ is the sheaf of germs of smooth sections of $\Lambda_\MV^{1,*}$ (see (\ref{e12})). We introduce the Morse-Novikov-Treves complex as the quotient sequence of the Morse-Novikov type sequence ($\mathcal{A}_\MV^*,\rd_\vartheta$):
\begin{equation}\label{new complex}
	\mathfrak{U}_\MV ^0\stackrel{\rd_\vartheta'}{\longrightarrow}\mathfrak{U}_\MV ^1\stackrel{\rd_\vartheta'}{\longrightarrow}\mathfrak{U}_\MV ^2\stackrel{\rd_\vartheta'}{\longrightarrow}\mathfrak{U}_\MV ^3\stackrel{\rd_\vartheta'}{\longrightarrow}\cdots\stackrel{\rd_\vartheta'}{\longrightarrow}\mathfrak{U}_\MV^n\longrightarrow0,
\end{equation}
where $\mathfrak{U}_\MV^*$ is the sheaf of smooth sections of the quotient bundle ${\Lambda^*\mathbb{C}T^*M}/{\Lambda_\MV^{1,*-1}}$ (here $\Lambda_\MV^{1,-1}:=0$), or equivalently,  $\mathfrak{U}_\MV^*=\frac{\mathcal{A}^*}{\mathcal{A}_\MV^{1,*-1}}$ (here $\mathcal{A}^*$ is the sheaf of smooth forms), and
\begin{align}\label{dvt}
	\rd_\vartheta':\mathfrak{U}_\MV^q&\rightarrow\mathfrak{U}_\MV^{q+1}\nonumber\\
	u\ {\rm mod}\ \mathcal{A}_\MV^{1,q-1}&\mapsto(\rd u-\vartheta\wedge u)\ {\rm mod}\ \mathcal{A}_\MV^{1,q}.
\end{align}
It's obvious that $(\rd_\vartheta')^2=0$ by (\ref{mnf}). In particular, the complex (\ref{new complex}) is exactly the Treves complex in \cite{Tf81} if $\vartheta\equiv0$ where we will denote $\rd_0'$ by $\rd'$ in what follows. When $\MV$ arises from a Levi flat structure, both $\MV$ itself and $\MV+\overline{\MV}$ are formally integrable (the latter being essentially real), yielding two natural differential complexes. Recently, Paulo D. Cordaro and V. Novelli established an intriguing comparison principle between the differential complexes associated with $\MV$ and $\MV+\overline{\MV}$ on Levi flat CR manifolds.

The complex (\ref{new complex}) extends naturally to the following complex
\begin{equation}\label{new complex1}
	\mathfrak{L}_\MV ^0\stackrel{\rd_\vartheta'}{\longrightarrow}\mathfrak{L}_\MV ^1\stackrel{\rd_\vartheta'}{\longrightarrow}\mathfrak{L}_\MV ^2\stackrel{\rd_\vartheta'}{\longrightarrow}\mathfrak{L}_\MV ^3\stackrel{\rd_\vartheta'}{\longrightarrow}\cdots\stackrel{\rd_\vartheta'}{\longrightarrow}\mathfrak{L}_\MV^n\longrightarrow0,
\end{equation}
where $\mathfrak{L}_\MV^q$ for $0\leq q\leq n$ is the sheaf generated by the following complete presheaf
\begin{align*}
	\mathfrak{L}_\MV^q(U):=\left\{[u]\in L_{loc}^2(U,{\Lambda^q\mathbb{C}T^*M}/{\Lambda_\MV^{1,q-1}})\ |\ \rd_\vartheta'[u]\in L_{loc}^2(U,{\Lambda^{q+1}\mathbb{C}T^*M}/{\Lambda_\MV^{1,q}})\right\},
\end{align*}
where $U$ is any open subset of $M$.

Given a sufficiently small open subset $U\subseteq M$, let $L$ be a basic line bundle. We have the following commutative diagram over $U$ for $q\geq0$
\begin{align}\label{compare}
	\begin{CD}
		L_{loc}^2(U,\Lambda_\MV^{m,q}(L))@>\dmv >>  L_{loc}^2(U,\Lambda_\MV^{m,q+1}(L))\\
		@VV \Phi_q^{_U} V  @V \Phi_{q+1}^{_U}	 VV\\
		L_{loc}^2(U,{\Lambda^q\mathbb{C}T^*M}/{\Lambda_\MV^{1,q-1}}) @>\rd_\vartheta'>> L_{loc}^2(U,{\Lambda^{q+1}\mathbb{C}T^*M}/{\Lambda_\MV^{1,q}}),
	\end{CD}
\end{align}
where
\begin{align}\label{phi_q}
	\Phi_q^{_U}: L_{loc}^2(U,\Lambda_\MV^{m,q}(L))&\rightarrow L_{loc}^2(U,{\Lambda^q\mathbb{C}T^*M}/{\Lambda_\MV^{1,q-1}})\\
	\Theta_{_U}\wedge v_{_U}\otimes \sigma_{_U}&\mapsto (-1)^{qm}v_{_U}\ {\rm mod}\ L_{loc}^2(U,\Lambda_\MV^{1,q-1}),\nonumber
\end{align}
in which $\Theta_{_U}:=\theta^1\wedge\cdots\wedge\theta^m$ (here $\{\theta^1,\cdots,\theta^m\}$ is a frame of $N^*\MV$ over $U$ such that $\Theta_{_U}$ is basic) and $\sigma_{_U}$ is a basic frame of $L$ over $U$. In fact,
\begin{align*}
	\dmv(\Theta_{_U}\wedge v_{_U}\otimes \sigma_{_U})&=(\rd -\Xi_\MV-\vartheta\wedge)(\Theta_{_U}\wedge v_{_U})\otimes \sigma_{_U}\\
	&=\big(\rd \Theta_{_U}\wedge v_{_U}+(-1)^m\Theta_{_U}\wedge\rd v_{_U}-\Xi_\MV(\Theta_{_U}\wedge v_{_U})-\vartheta\wedge(\Theta_{_U}\wedge v_{_U})\big)\otimes \sigma_{_U}\\
	&=(-1)^m\Theta_{_U}\wedge\rd_\vartheta v_{_U}\otimes \sigma_{_U},
\end{align*}
then
\begin{align*}
	\Phi_{q+1}^{_U}\circ\dmv(\Theta_{_U}\wedge v_{_U}\otimes \sigma_{_U})&=\Phi_{q+1}^{_U}\big((-1)^m\Theta_{_U}\wedge\rd_\vartheta v_{_U}\otimes \sigma_{_U}\big)\\
	&=(-1)^{(q+2)m}\rd_\vartheta v_{_U}\ {\rm mod}\ L_{loc}^2(U,\Lambda_\MV^{1,q})\\
	&\equiv\rd_\vartheta'\big((-1)^{qm}v_{_U}\ {\rm mod}\ L_{loc}^2(U,\Lambda_\MV^{1,q-1})\big),
\end{align*}
which implies that (\ref{compare}) is commutative.

It is evident from (\ref{phi_q}) that each $\Phi_q^{_U}$ is an isomorphism depending on the choice of $\Theta_{_U}$. Furthermore, the restriction of $\Phi_q^{_U}$ induces an isomorphism from $C^\infty(U,\Lambda_\MV^{m,q}(L))$ to $C^\infty(U,{\Lambda^q\mathbb{C}T^*M}/{\Lambda_\MV^{1,q-1}})$. By taking $L$ to be the dual bundle $K_{\MV}^{-1}$ of the canonical bundle $K_\MV$, we obtain the following global realization of the complexes (\ref{new complex1}) and (\ref{new complex}).

\begin{prop}\label{gr1}
	If $\MV$ is a formally integrable structure with the basic canonical bundle $K_{\MV}$. The local isomorphisms $\Phi_q^{_U}$ in $(\ref{phi_q})$ can be patched together to form a sheaf-theoretic isomorphism $\Phi_q:\mathcal{L}_\MV^{m,q}(K_{\MV}^{-1})\rightarrow\mathfrak{L}_{\MV}^q$ such that
	\begin{align*}
		\begin{CD}
			\mathcal{L}_\MV^{m,q}(K_{\MV}^{-1})@>\dmv >> \mathcal{L}_\MV^{m,q+1}(K_{\MV}^{-1})\\
			@VV \Phi_q V  @V \Phi_{q+1}	VV\\
			\mathfrak{L}_{\MV}^q @>\rd_\vartheta'>> \mathfrak{L}_{\MV}^{q+1}
		\end{CD}
	\end{align*}
	is commutative for any $0\leq q\leq n$. In particular, $\Phi_q$ is well-defined provided that $\MV$ is locally integrable. Moreover, the collection $\{\Phi_q\}_{q=0}^n$ also induces an isomorphism from the complex $(\mathcal{A}_\MV^{m,*}(K_{\MV}^{-1}),\dmv)$ to the Morse-Novikov-Treves complex $(\mathfrak{U}_\MV^*,\rd_\vartheta')$.
\end{prop}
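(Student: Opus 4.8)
The plan is to read $\Phi_q$ as the globalization of a canonical pointwise bundle isomorphism
\begin{align*}
\Phi_q:\ \Lambda_\MV^{m,q}(K_\MV^{-1})=\Lambda_\MV^{m,q}\otimes K_\MV^{-1}\ \xrightarrow{\ \sim\ }\ \Lambda^q\mathbb{C}T^*M/\Lambda_\MV^{1,q-1},
\end{align*}
obtained by cancelling the factor $K_\MV=\Lambda^m N^*\MV$ implicit in $\Lambda_\MV^{m,q}$ against the coefficient bundle $K_\MV^{-1}$, and then transporting the local commutativity of $(\ref{compare})$---already verified in the text---to the global level. Since each $\Phi_q^{_U}$ is already known to be an isomorphism, the only genuinely new point is that these local maps agree on overlaps, and the hypothesis $L=K_\MV^{-1}$ is exactly what makes this happen.

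First I would verify the gluing. On a nonempty overlap $U\cap U'$ the basic frames $\Theta_{_U},\Theta_{_{U'}}$ of $K_\MV$ are related by $\Theta_{_{U'}}=g_{UU'}\Theta_{_U}$ with $g_{UU'}$ annihilated by $\MV$; taking $\sigma_{_U}$ to be the basic frame of $K_\MV^{-1}$ dual to $\Theta_{_U}$, one has $\sigma_{_{U'}}=g_{UU'}^{-1}\sigma_{_U}$. Writing a section as $s=\Theta_{_U}\wedge v_{_U}\otimes\sigma_{_U}=\Theta_{_{U'}}\wedge v_{_{U'}}\otimes\sigma_{_{U'}}$ and using that $g_{UU'}$ is a scalar, the two canonical-bundle factors cancel, $g_{UU'}\cdot g_{UU'}^{-1}=1$, so $\Theta_{_U}\wedge(v_{_U}-v_{_{U'}})=0$. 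Since the kernel of $\Theta_{_U}\wedge(\cdot)$ on $\Lambda^q\mathbb{C}T^*M$ is exactly $\Lambda_\MV^{1,q-1}$, this yields $v_{_U}\equiv v_{_{U'}}\ {\rm mod}\ \Lambda_\MV^{1,q-1}$, whence $\Phi_q^{_U}(s)=\Phi_q^{_{U'}}(s)$; the $\Phi_q^{_U}$ therefore patch to a global sheaf isomorphism $\Phi_q$. For a general basic line bundle $L$ one would instead obtain $v_{_U}\equiv g_{UU'}\phi_{UU'}v_{_{U'}}$, which fails to cancel unless the transition cocycle of $L$ is inverse to that of $K_\MV$; this is the structural reason for taking $L=K_\MV^{-1}$.

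Next I would globalize the commutativity. The square $(\ref{compare})$ commutes on each $U$ by the computation already carried out, so once the $\Phi_q^{_U}$ are glued and since $\dmv$ and $\rd_\vartheta'$ are globally defined operators, the global square commutes on smooth sections. Because each $\Phi_q$ is a zeroth-order (pointwise invertible) bundle morphism, it intertwines the maximal $L^2_{loc}$-extensions: $u\in{\rm Dom}(\dmv)$ if and only if $\Phi_q u\in{\rm Dom}(\rd_\vartheta')$, with $\rd_\vartheta'\Phi_q u=\Phi_{q+1}\dmv u$, giving the asserted commutative diagram for all $0\le q\le n$.

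Finally, the same frames show that $\Phi_q^{_U}$ restricts to an isomorphism on smooth sections, so the glued $\Phi_q$ restricts to $\mathcal{A}_\MV^{m,q}(K_\MV^{-1})\xrightarrow{\ \sim\ }\mathfrak{U}_\MV^q$ and, by the commutativity just established, induces an isomorphism of complexes $(\mathcal{A}_\MV^{m,*}(K_\MV^{-1}),\dmv)\cong(\mathfrak{U}_\MV^*,\rd_\vartheta')$. When $\MV$ is locally integrable, $K_\MV=\Lambda^m N^*\MV$ is basic by Example \ref{exa}$(iii)$, so the standing hypothesis is automatic and $\Phi_q$ is defined. I expect the gluing of the second paragraph to be the only real obstacle: everything else is either recorded locally in the text or is a formal consequence of $\Phi_q$ being a bundle isomorphism, whereas the gluing is precisely where the identity $L=K_\MV^{-1}$ is used.
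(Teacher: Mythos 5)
Your proposal is correct and follows essentially the same route as the paper: the heart of both arguments is that the tautological section $\Theta_{_U}\otimes\sigma_{_U}$ of the trivial bundle $K_\MV\otimes K_\MV^{-1}$ is canonical, so the transition factor of $K_\MV$ cancels against that of $K_\MV^{-1}$ and the local maps $\Phi_q^{_U}$ agree (you check this on overlaps, the paper checks compatibility with restrictions, which is the same computation). Your added observations --- that the kernel of $\Theta_{_U}\wedge(\cdot)$ is exactly $\Lambda_\MV^{1,q-1}$, and that a pointwise invertible zeroth-order morphism intertwines the maximal $L^2_{loc}$-extensions --- are correct elaborations of steps the paper leaves implicit.
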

\begin{proof}
	Since $\Phi_q^{_U}$ is an isomorphism such that (\ref{compare}) is commutative for any sufficiently small open subset $U\subseteq M$, it suffices to prove that for any pair of sufficiently small open subsets $V\subseteq U\subseteq M$, we have following commutative diagram
	\begin{align}\label{compare2}
		\begin{CD}
			L_{loc}^2\big(U,\Lambda_\MV^{m,q}(K_{\MV}^{-1})\big)@>\Phi_q^{_U}>>L_{loc}^2(U,{\Lambda^q\mathbb{C}T^*M}/{\Lambda_\MV^{1,q-1}})\\
			@VV\rho_{_V}^{_U}V@V\tilde{\rho}_{_V}^{_U}VV\\
			L_{loc}^2\big(V,\Lambda_\MV^{m,q}(K_{\MV}^{-1})\big)@>\Phi_q^{_V}>>L_{loc}^2(V,{\Lambda^q\mathbb{C}T^*M}/{\Lambda_\MV^{1,q-1}}),
		\end{CD}
	\end{align}
	where $\rho_{_V}^{_U}$ and $\tilde{\rho}_{_V}^{_U}$ are canonical restriction maps.

	Clearly, $K_{\MV}\otimes K_{\MV}^{-1}$ is a trivial bundle, then
	\begin{align*}
		\Theta_{_U}\otimes \sigma_{_U}|_{_V}=\Theta_{_V}\otimes \sigma_{_V},
	\end{align*}
	where $\Theta_{_U}$ and $\sigma_{_U}$ are basic frames over the open set $U$ of $K_{\MV}$ and $K_{\MV}^{-1}$ repectively. For
	\begin{align*}
		\Theta_{_U}\wedge v_{_U}\otimes \sigma_{_U}\in L_{loc}^2\big(U,\Lambda_\MV^{m,q}(K_{\MV}^{-1})\big),
	\end{align*}
	we have
	\begin{align*}
		\rho_{_V}^{_U}(\Theta_{_U}\wedge v_{_U}\otimes \sigma_{_U})=(\Theta_{_U}\wedge v_{_U}\otimes \sigma_{_U})|_{_V}=\Theta_{_V}\wedge v_{_U}|_{_V}\otimes \sigma_{_V},
	\end{align*}
	which gives
	\begin{align*}
		\Phi_q^{_V}\circ\rho_{_V}^{_U}(\Theta_{_U}\wedge v_{_U}\otimes \sigma_{_U})&=(-1)^{qm}v_{_U}|_{_V}\ {\rm mod}\ L_{loc}^2(V,\Lambda_{\MV}^{1,q-1})\\
		&=\tilde{\rho}_{_V}^{_U}\big((-1)^{qm}v_{_U}\ {\rm mod}\ L_{loc}^2(U,\Lambda_{\MV}^{1,q-1})\big)\\
		&=\tilde{\rho}_{_V}^{_U}\circ\Phi_q^{_U}(\Theta_{_U}\wedge v_{_U}\otimes \sigma_{_U}),
	\end{align*}
	i.e., (\ref{compare2}) is commutative. Hence, $\{\Phi_q^{_U}\}_{_U}$ induces a sheaf-theoretic isomorphism $\Phi_q$. For locally integrable $\MV$, the conclusion follows from the fact that $K_{\MV}$ is a basic line bundle, as shown in Example \ref{exa} (iii). The final statement is immediate since $\Phi_q^{_U}$ preserves smoothness for all $q\geq0$ and sufficiently small open sets $U$.
\end{proof}

There has been considerable progress on the global hypoellipticity and global solvability of the Treves complex (see \cite{CC17}, \cite{CJ21}, \cite{ADd23a}, \cite{ADd23}, \cite{AFJR24} and references therein). Proposition \ref{gr1}, combined with Theorem \ref{t1'} gives the global solvability of the Morse-Novikov-Treves complex as follows.
\begin{cor}[=Corollary \ref{treves'}]\label{treves}
	For any Levi flat structure $\MV$, let $\vartheta$ be a global smooth $1$-form satisfying $(\ref{mnf})$. If $M$ has a smooth $q$-convex exhaustion function with respect to $\MV$, then the Morse-Novikov-Treves complex $($see $(\ref{new complex}))$ is globally $L_{loc}^2$-solvable in degree $q'\geq q$, i.e., for every $f\in L_{loc}^2(M,\Lambda^{q'}\mathbb{C}T^*M/\Lambda_\MV^{1,q'-1})$ with $\rd_\vartheta'f=0$, there is a $u\in L_{loc}^2(M,\Lambda^{q'-1}\mathbb{C}T^*M/\Lambda_\MV^{1,q'-2})$ such that $\rd_\vartheta'u=f$. In particular, this global $L_{loc}^2$-solvability can be strengthened to global smooth solvability when $\MV$ is elliptic.
\end{cor}

\begin{cor}\label{sros}
	Let $\MV$ be a Levi flat structure on a smooth manifold $M$, $E$ a basic vector bundle over $(M,\MV)$, and let $\vartheta$ be a global smooth $1$-form satisfying $(\ref{mnf})$. The complex $(\ref{complex11})$ is exact. In particular, for any $0\leq q\leq n$ we have
	\begin{align}\label{ISO1}
		H^q(M,\mathcal{O}_\MV (E))\cong\frac{{\rm{Ker}}\left(C^\infty\big(M,\Lambda_\MV^{m,q}(K_\MV^{-1}\otimes E)\big)\stackrel{\dV}{\longrightarrow}C^\infty\big(M,\Lambda_\MV^{m,q+1}(K_\MV^{-1}\otimes E)\big)\right)}{{\rm{Im}}\left(C^\infty\big(M,\Lambda_\MV^{m,q-1}(K_\MV^{-1}\otimes E)\big)\stackrel{\dV}{\longrightarrow}C^\infty\big(M,\Lambda_\MV^{m,q}(K_\MV^{-1}\otimes E)\big)\right)}.
	\end{align}
\end{cor}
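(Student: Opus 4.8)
The plan is to show that the smooth sheaf complex (\ref{complex11}) is a fine resolution of $\mathcal{O}_\MV(K_\MV\otimes E)$, and then to read off (\ref{ISO1}) from the abstract de Rham theorem after replacing $E$ by $K_\MV^{-1}\otimes E$. Fineness is immediate: each $\mathcal{A}_\MV^{m,q}(E)$ is the sheaf of smooth sections of the bundle $\Lambda_\MV^{m,q}(E)$, hence a module over $C^\infty_M$ and therefore fine. The substance of the corollary is thus the exactness of (\ref{complex11}) as a complex of sheaves, which I would split into two parts: the identification of the degree-zero cohomology sheaf, and local exactness in positive degrees.

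For the degree-zero part I would unwind the local expression (\ref{ed}) of $\dV$ in the case $\vartheta\equiv0$. Writing a section of $\Lambda_\MV^{m,0}(E)=K_\MV\otimes E$ locally as $\sum_a u^a\,\Theta_{_U}\otimes e^a$ with $\Theta_{_U}=\theta^1\wedge\cdots\wedge\theta^m$ a basic frame of $K_\MV$, one computes $\rd\Theta_{_U}=\big(\sum_\jmath\theta_\jmath^\jmath\big)\wedge\Theta_{_U}=\Xi_\MV(\Theta_{_U})$ from (\ref{inte}), so the correction $\Xi_\MV$ cancels exactly and $\dV\big(\sum_a u^a\Theta_{_U}\otimes e^a\big)=\sum_a\rd u^a\wedge\Theta_{_U}\otimes e^a$. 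Since wedging with $\Theta_{_U}$ annihilates the $\theta^\ell$-components, this vanishes iff $X_j(u^a)=0$ for all $j,a$; that is, $\ker\big(\dV\colon\mathcal{A}_\MV^{m,0}(E)\to\mathcal{A}_\MV^{m,1}(E)\big)=\mathcal{O}_\MV(K_\MV\otimes E)$, the smooth analogue of the identity recorded in the motivation opening Section \ref{global.}.

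The main step is local exactness in positive degrees: given a point and smooth $f$ with $\dV f=0$ on a neighborhood, one must produce a smooth $u$ nearby with $\dV u=f$. Corollary \ref{local existence} already furnishes an $L^2_{loc}$ solution, so the only remaining issue is regularity, and this is the crux — $\dmv$ is \emph{not} hypoelliptic, since along the leaf directions of $\MV\cap\overline{\MV}$ it behaves like an exterior derivative, which does not smooth its solutions. Here I would invoke the complex Frobenius normal form (\cite{Nl57}): in coordinates $(z,y,t)$ the frame $\{\partial_{\bar z_\varrho},\partial_{t_\tau}\}$ spans $\MV$, so modulo the zero-order terms $\Xi_\MV$ and $\vartheta\wedge$ the operator $\dV$ factors as $\bar\partial_z+\rd_t$ on forms depending on the transverse parameters $y$. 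One then solves smoothly in two stages — the $\rd_t$-Poincaré lemma by fibrewise integration (smooth in the parameters $z,y$), followed by the Dolbeault--Grothendieck lemma for $\bar\partial_z$ (with smooth dependence on $t,y$) — and absorbs the lower-order terms by a standard iteration. This is precisely the regularity statement of \cite{T2}, which I would cite to obtain smooth local solvability directly.

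With fineness, the degree-zero identification, and positive-degree exactness in hand, the abstract de Rham theorem identifies the cohomology of the complex of global sections of (\ref{complex11}) with $H^q(M,\mathcal{O}_\MV(K_\MV\otimes E))$. Replacing $E$ by the basic bundle $K_\MV^{-1}\otimes E$, for which $K_\MV\otimes(K_\MV^{-1}\otimes E)\cong E$, turns the degree-zero sheaf into $\mathcal{O}_\MV(E)$ and yields (\ref{ISO1}), using $\Gamma\big(M,\mathcal{A}_\MV^{m,q}(K_\MV^{-1}\otimes E)\big)=C^\infty\big(M,\Lambda_\MV^{m,q}(K_\MV^{-1}\otimes E)\big)$. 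The single obstacle throughout is the regularity in the non-elliptic foliation directions; this is exactly where the Levi flat hypothesis (constant rank of $\MV+\overline{\MV}$, hence local integrability by \cite{Nl57}) and the cited result of \cite{T2} do the work, and it is what makes the present smooth statement genuinely stronger than the $L^2_{loc}$ version of Corollary \ref{ros}.
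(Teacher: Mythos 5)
Your proposal is correct in outline and leans on the same decisive external input as the paper, namely the regularity theorem in \cite{T2} that promotes $L^2_{loc}$ local solvability to smooth local solvability for a locally integrable structure; but the two arguments are organized differently. The paper never argues directly on $\mathcal{A}_\MV^{m,*}(E)$: it transfers the $L^2_{loc}$ exactness of Corollary \ref{ros} (for the coefficient bundle $K_\MV^{-1}$ and $\vartheta\equiv0$) through the global realization $\Phi_q$ of Proposition \ref{gr1} to the Treves complex $(\mathfrak{U}_\MV^{*},\rd')$, applies Theorem VIII.9.1 of \cite{T2} there, conjugates by the multiplication operator $e^{-v}$ (where locally $\rd v\equiv\vartheta\ \mathrm{mod}\ C^\infty(V,N^*\MV)$, which exists by $(\ref{mnf})$ and the exactness just obtained) to pass from $\rd'$ to $\rd_\vartheta'$, pulls everything back through $\Phi_q$, and only then inserts a general $E$ by tensoring the exact complex with the locally free $\mathcal{O}_\MV$-module $\mathcal{O}_\MV(E)$. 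Your direct identification of the degree-zero kernel via the cancellation $\rd\Theta_{_U}=\Xi_\MV(\Theta_{_U})$ is correct and is an explicit substitute for the remark opening Section \ref{global.}; the fineness and abstract de Rham steps, and the final substitution $E\mapsto K_\MV^{-1}\otimes E$, match the paper.

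The one step I would not accept as written is the claim that the zero-order terms $\Xi_\MV$ and $\vartheta\wedge$ can be ``absorbed by a standard iteration'': perturbing a differential complex by zero-order terms does not in general preserve local exactness, and no convergent scheme is indicated; moreover the theorem you cite from \cite{T2} concerns the untwisted complex of the locally integrable structure, so the reduction to that case must be exact rather than approximate. Both terms can in fact be removed outright: $\Xi_\MV$ vanishes locally for a suitable basic frame of $K_\MV$ because a Levi flat structure is locally integrable (\cite{Nl57}), and $\vartheta$ is gauged away by $e^{-v}$ exactly as above, which is legitimate precisely because $(\ref{mnf})$ makes $[\vartheta]$ locally $\rd'$-exact. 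With that substitution your argument closes and yields $(\ref{ISO1})$.
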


\begin{proof}
	Since the complex $(\mathcal{L}_\MV^{m,*}(K_{\MV}^{-1}),\dV)$ is exact by Corollary \ref{ros}, Proposition \ref{gr1} and Theorem VIII.9.1 in \cite{T2} imply that the complex $(\mathfrak{U}_\MV^{*},\rd')$ is also exact. It follows from (\ref{mnf}) that $\rd'[\vartheta]=0$ where $[\vartheta]\in C^\infty(U,\mathbb{C}T^*M/\Lambda_\MV^{1,0})$ for any open subset $U\subseteq M$, then there exists an open subset $V\subset\subset U$ and $v\in C^\infty(V)$ such that
	\begin{align*}
		\rd v\equiv\vartheta|_{_V}\ \text{mod}\ C^\infty(V,N^*\MV).
	\end{align*}
	Hence, by (\ref{dvt}) we have the following commutative diagram for $q\geq0$:
	\begin{align*}
		\begin{CD}
			C^\infty(V,{\Lambda^q\mathbb{C}T^*M}/{\Lambda_\MV^{1,q-1}})@>\rd_\vartheta' >>  C^\infty(V,{\Lambda^{q+1}\mathbb{C}T^*M}/{\Lambda_\MV^{1,q}})\\
			@VV (e^{-v})\cdot V  @V (e^{-v})\cdot	 VV\\
			C^\infty(V,{\Lambda^q\mathbb{C}T^*M}/{\Lambda_\MV^{1,q-1}}) @>\rd'>> C^\infty(V,{\Lambda^{q+1}\mathbb{C}T^*M}/{\Lambda_\MV^{1,q}}),
		\end{CD}
	\end{align*}
	where $(e^{-v})\cdot$ denotes the multiplication operator by $e^{-v}$, which is evidently an isomorphism. Consequently, the complex $(\mathfrak{U}_\MV^{*},\rd_\vartheta')$ is exact, which yields that $(\mathcal{A}_\MV^{m,*}(K_{\MV}^{-1}),\dmv)$ is also exact according to Proposition \ref{gr1}.
	
	For $q\geq0$, the definition of $\dV$ in (\ref{bo}) gives a commutative diagram:
	\begin{align}\label{dd}
		\begin{CD}
			\mathcal{A}_\MV^{m,q}(K_{\MV}^{-1}\otimes E)@>\dmv>> \mathcal{A}_\MV^{m,q+1}(K_{\MV}^{-1}\otimes E)\\
			@VV \backsimeq V  @V \backsimeq	VV\\
			\mathcal{A}_\MV^{m,q}(K_{\MV}^{-1})\otimes_{_{\mathcal{O}_\MV}}\mathcal{O}_\MV(E) @>\dmv\otimes{\rm Id}_E>> \mathcal{A}_\MV^{m,q+1}(K_{\MV}^{-1})\otimes_{_{\mathcal{O}_\MV}}\mathcal{O}_\MV(E).
		\end{CD}
	\end{align}
	Clearly, $\mathcal{O}_\MV(E)$ is a locally free $\mathcal{O}_\MV$-module, then by (\ref{dd}) the complex of sheaves
	\begin{align}\label{com}
		\mathcal{A}_\MV^{m,0}(K_{\MV}^{-1}\otimes E)\stackrel{\dmv }{\longrightarrow}\mathcal{A}_\MV^{m,1}(K_{\MV}^{-1}\otimes E)\stackrel{\dmv }{\longrightarrow}\cdots\stackrel{\dmv }{\longrightarrow}\mathcal{A}_\MV^{m,n}(K_{\MV}^{-1}\otimes E)\longrightarrow0
	\end{align}
	is exact. In particular, the complex $(\mathcal{A}_\MV^{m,*}(K_{\MV}^{-1}\otimes E),\dV)$ is a fine resolution of $\mathcal{O}_\MV(E)$ and thereby (\ref{ISO1}) holds. The exactness of the complex (\ref{complex11}) follows immediately from replacing $E$ with $K_\MV\otimes E$ in (\ref{com}).
\end{proof}

\subsection{A reformulation of the Morse-Novikov-Treves complex}\label{iso}
The canonical quotient homomorphism $ \mathbb{C}T^*M\rightarrow\mathbb{C}T^*M/N^*\MV$, together with the isomorphism
\begin{align}\label{quo}
	\mathbb{C}T^*M/N^*\MV\cong\MV^*,
\end{align}
induces an epimorphism for any $q\geq0$,
\begin{align*}
	\pi_q:\Lambda^q\mathbb{C}T^*M\rightarrow\Lambda^q(\mathbb{C}T^*M/N^*\MV)\cong\Lambda^q\MV^*
\end{align*}
whose kernel contains $\Lambda_\MV^{1,q-1}$. Since for $1\leq q\leq n$,
\begin{align*}
	\dim_{\mathbb{C}}\Lambda^q\mathbb{C}T^*M/\Lambda_\MV^{1,q-1}&=\binom{m+n}{q}-\binom{m}{1}\binom{n+m-1}{q-1}=\binom{n}{q}=\dim_{\mathbb{C}}\Lambda^q\MV^*,
\end{align*}
the epimorphism $\pi_q$ induces an isomorphism
\begin{align*}
	\tilde{\Phi}_q:\Lambda^q\mathbb{C}T^*M/\Lambda_\MV^{1,q-1}\rightarrow\Lambda^q\MV^*.
\end{align*}
Thus, we have the following sheaf-theoretic commutative diagram for $q\geq0$
\begin{align}\label{compare1}
	\begin{CD}
		\mathfrak{L}_{\MV}^q@>\rd_\vartheta'>>\mathfrak{L}_{\MV}^{q+1}\\
		@VV \tilde{\Phi}_q V  @V \tilde{\Phi}_{q+1}	 VV\\
		\mathfrak{L}^q\MV^*@>\rd_\vartheta''>>\mathfrak{L}^{q+1}\MV^*,
	\end{CD}
\end{align}
where $\rd_\vartheta'':=\tilde{\Phi}_q^{-1}\circ\rd_\vartheta'\circ\tilde{\Phi}_{q+1}$, and $\mathfrak{L}^q\MV^*$ for $0\leq q\leq n$ is the sheaf generated by the following complete presheaf
\begin{align*}
	\mathfrak{L}^q\MV^*(U):=\left\{[u]\in L_{loc}^2(U,{\Lambda^q\MV^*})\ |\ \rd_\vartheta''[u]\in L_{loc}^2(U,{\Lambda^{q+1}\MV^*})\right\},
\end{align*}
where $U\subseteq M$ is any open subset. It's straightforward to see that the restriction $\rd_\vartheta'':C^\infty(M,\Lambda^q\MV^*)\rightarrow C^\infty(M,\Lambda^{q+1}\MV^*)$ is determined by
\begin{align*}
	\rd_\vartheta'' u(X_{1},\cdots,X_{q+1})=&\sum_{j=1}^{q+1}(-1)^{j+1}X_{j}\big(u(X_{1},\cdots,\hat{X}_{j}\cdots,X_{{q+1}})\big)\nonumber\\
	&+\sum_{j<k}(-1)^{j+k}u\big([X_{j},X_{k}],X_{1},\cdots,\hat{X}_{j},\cdots,\hat{X}_{k},\cdots,X_{q+1}\big)\nonumber\\
	&-([\vartheta]\wedge u)(X_{1},\cdots,X_{q+1}),
\end{align*}
where $X_1,\cdots,X_{q+1}\in C^\infty(M,\MV)$, the notation $\hat{X}_j$ means $X_j$ is omitted and the form $[\vartheta]:=(\vartheta\ {\rm mod}\ C^\infty(M,N^*\MV))\in C^\infty(M,\MV^*)$ by the isomorphism (\ref{quo}).

The diagram (\ref{compare1}) then yields a complex $(\mathfrak{L}^*\MV^*,\rd_\vartheta'')$ for $\MV$ as follows
\begin{align}\label{lmnc}
	\mathfrak{L}^0\MV^*\stackrel{\rd_\vartheta''}{\longrightarrow}\mathfrak{L}^1\MV^*\stackrel{\rd_\vartheta''}{\longrightarrow}\mathfrak{L}^2\MV^*\stackrel{\rd_\vartheta''}{\longrightarrow}\mathfrak{L}^3\MV^*\stackrel{\rd_\vartheta''}{\longrightarrow}\cdots\stackrel{\rd_\vartheta''}{\longrightarrow}\mathfrak{L}^n\MV^*\stackrel{}{\longrightarrow}0.
\end{align}
In other words, we obtain
\begin{prop}\label{isop}
	Let $\MV$ be a formally integrable structure, and $\vartheta$ a global smooth $1$-form satisfying $(\ref{mnf})$. Then the complex $(\ref{lmnc})$ for $\MV$ is isomorphic to the complex $(\ref{new complex1})$ associated with $\MV$.
\end{prop}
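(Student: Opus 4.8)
The plan is to exhibit the family $\{\tilde{\Phi}_q\}_{q=0}^n$ constructed just before the statement as a chain isomorphism, so that the proposition reduces to two verifications: that each $\tilde{\Phi}_q$ is an isomorphism of sheaves, and that every square in $(\ref{compare1})$ commutes. Since the target differential $\rd_\vartheta''$ was defined precisely by transporting $\rd_\vartheta'$ through the $\tilde{\Phi}$'s, the second point is essentially tautological, and the substance of the argument lies in the first together with a check that everything is compatible with the $L_{loc}^2$-structure.

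First I would record that $\tilde{\Phi}_q$ genuinely descends to a bundle isomorphism. The epimorphism $\pi_q:\Lambda^q\mathbb{C}T^*M\to\Lambda^q\MV^*$ annihilates $\Lambda_\MV^{1,q-1}$, hence factors through the quotient bundle $\Lambda^q\mathbb{C}T^*M/\Lambda_\MV^{1,q-1}$ to give $\tilde{\Phi}_q$; the fiber-dimension equality computed before the statement forces this factored map to be a fiberwise, and therefore a smooth bundle, isomorphism. Being a zeroth-order bundle isomorphism, $\tilde{\Phi}_q$ induces for every open $U\subseteq M$ an isomorphism on $L_{loc}^2$-sections that commutes with the restriction maps, so it assembles into a sheaf isomorphism $\tilde{\Phi}_q:\mathfrak{L}_\MV^q\to\mathfrak{L}^q\MV^*$.

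Next I would invoke the commutativity of $(\ref{compare1})$: since $\rd_\vartheta''$ is by construction the conjugate of $\rd_\vartheta'$ by the $\tilde{\Phi}$'s, each square commutes, i.e. $\{\tilde{\Phi}_q\}$ is a chain map, and being an isomorphism in every degree it is an isomorphism of complexes identifying $(\ref{lmnc})$ with $(\ref{new complex1})$. In particular $(\rd_\vartheta'')^2=0$ is inherited from $(\rd_\vartheta')^2=0$ (which holds by $(\ref{mnf})$) through the conjugation, so $(\ref{lmnc})$ is indeed a complex.

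The one point requiring genuine care — the main, if mild, obstacle — is that $\rd_\vartheta'$ and $\rd_\vartheta''$ are the maximal weak extensions between the $L_{loc}^2$-sheaves, not merely operators on smooth sections. I would verify that conjugation by the smooth zeroth-order bundle isomorphism $\tilde{\Phi}_q$ carries $\mathrm{Dom}(\rd_\vartheta')$ onto $\mathrm{Dom}(\rd_\vartheta'')$ and intertwines the two, which holds because contraction against a smooth bundle isomorphism commutes with distributional differentiation; this guarantees the diagram commutes at the level of the extended operators and not only on the smooth subcomplex. Finally, identifying the transported operator $\rd_\vartheta''$ with the intrinsic Cartan-type formula displayed before the statement (including the $-[\vartheta]\wedge$ term, with $[\vartheta]\in C^\infty(M,\MV^*)$ via $(\ref{quo})$) is a routine local computation in an adapted frame; it is not needed for the isomorphism itself, but it is what exhibits $\rd_\vartheta''$ as the natural differential on $\Lambda^\bullet\MV^*$.
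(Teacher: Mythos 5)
Your proposal is correct and follows essentially the same route as the paper: the paper's entire argument is the construction preceding the statement, namely that $\pi_q$ kills $\Lambda_\MV^{1,q-1}$, the fiber-dimension count makes the induced map $\tilde{\Phi}_q$ a bundle isomorphism, and $\rd_\vartheta''$ is defined by conjugation so that $(\ref{compare1})$ commutes by fiat. Your extra remark that conjugation by a smooth zeroth-order bundle isomorphism is compatible with the maximal weak extensions is a sensible point the paper leaves implicit, but it does not change the argument.
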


We now discuss two special cases of the complex (\ref{lmnc}), where $\MV$ is an essentially real structure and $\MV$ defines a Levi flat CR structure respectively as follows.

\subsubsection{The \texorpdfstring{$L_{loc}^2$}{}-leafwise Morse-Novikov complex}
For any essentially real structure $\MV$ (i.e., $\MV=\overline{\MV}$), by the Frobenius theorem, there exists a foliation $\mathcal{F}$ on $M$ of dimension $n$, such that its tangent bundle $T\mathcal{F}=\MV$ and
\begin{align*}
	T^*\mathcal{F}=\MV^*\cong\mathbb{C}T^*M/N^*\MV.
\end{align*}
To align with standard conventions in foliation theory, we replace the notation $(\Lambda^*\MV^*,\rd_\vartheta'')$ with $(\Lambda^*T^*\mathcal{F},\dfv)$. The complex (\ref{lmnc}) gives the following complex
\begin{align}\label{lmnc1}
	L_{loc}^2(M,\Lambda^0T^*\mathcal{F})\stackrel{\dfv}{\longrightarrow}L_{loc}^2(M,\Lambda^1T^*\mathcal{F})\stackrel{\dfv}{\longrightarrow}L_{loc}^2(M,\Lambda^2T^*\mathcal{F})\stackrel{\dfv}{\longrightarrow}\cdots,
\end{align}
which is called \emph{the $L_{loc}^2$-leafwise Morse-Novikov complex} for $\MV$. The $q$-th cohomology group $L_{loc}^2H_{\mathcal{F},[\vartheta]}^{q}(M)$ of the complex (\ref{lmnc1}) is called the $q$-th \emph{leafwise Morse-Novikov $L_{loc}^2$-cohomology group}.

Proposition \ref{isop} and Corollary \ref{treves} yield a vanishing result for the leafwise Morse-Novikov $L_{loc}^2$-cohomology.
\begin{cor}\label{vlmnc1}
	Let $(M,\mathcal{F})$ be a foliated manifold, and let $[\vartheta]$ be a leafwise $1$-form which is $\rd_{\mathcal{F},0}$-closed. For $q\geq1$, assume that there exists an exhaustion function $\varphi\in C^\infty(M)$ such that for each leaf $F$,
	\begin{align*}
		{\rm Hess}_{\varphi|_{_F}}\ \text{has at least}\ n-q+1\ \text{positive eigenvalues at every critical point of}\ \varphi|_{_F}.
	\end{align*}
	Then $L_{loc}^2H_{\mathcal{F},[\vartheta]}^{q'}(M)=0$ for $q'\geq q$.
\end{cor}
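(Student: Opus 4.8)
The plan is to recognize Corollary \ref{vlmnc1} as a direct specialization of Corollary \ref{treves} to the essentially real case, and the main work is simply to verify that the hypotheses match under the dictionary established in Subsection 4.2. First I would observe that an essentially real structure $\MV = \overline{\MV}$ is automatically a Levi flat structure: its Levi form vanishes identically (it is a real Frobenius/foliation structure), so $\MV + \overline{\MV} = \MV$ has constant rank $n$, and $\MV \cap \overline{\MV} = \MV$ likewise has constant rank. Thus the standing hypotheses of Theorem \ref{t1} and Corollary \ref{treves} are in force, and the foliation $\mathcal{F}$ with $T\mathcal{F} = \MV$ arises from the Frobenius theorem exactly as described preceding \eqref{lmnc1}.

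Next I would translate the convexity hypothesis. In the essentially real case we have $d = 0$ in the complex Frobenius normal form, so by \eqref{ers} the quadratic form $Q_{\varphi,\mfp}$ is precisely the leafwise Hessian $\mathrm{Hess}_{\varphi|_D}$ at a critical point of the leafwise restriction $\varphi|_D$, and $\mathcal{K}_\varphi = \bigcup_\alpha \mathcal{C}_{\varphi_\alpha}$ is the union over leaves of the leafwise critical loci. Moreover, since $\MV = \overline{\MV}$, at any $\lmfp \in \mathcal{K}_\varphi$ we have $\MV_\mfp \cap \mathrm{Ker}(\rd\varphi) = \MV_\mfp \cap \mathrm{Ker}(\rd(\varphi|_D))$, which on the leafwise critical locus is all of $\MV_\mfp$ (dimension $n$). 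Hence the requirement in Definition \ref{qpqc}(ii) that $Q_{\varphi,\mfp}$ have at least $\dim_\mathbb{C}(\MV_\mfp \cap \mathrm{Ker}(\rd\varphi)) - q + 1 = n - q + 1$ positive eigenvalues on that subspace is exactly the stated hypothesis that $\mathrm{Hess}_{\varphi|_D}$ has at least $n - q + 1$ positive eigenvalues at every critical point of $\varphi|_D$, for every leaf $D$. Therefore $\varphi$ is $q$-convex with respect to $\MV$ in the sense of Definition \ref{qpqc}(ii).

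With the hypotheses of Corollary \ref{treves} verified, I would apply it with this $\MV$ and $\vartheta$ (the given $\rd_{\mathcal{F},0}$-closed leafwise $1$-form $[\vartheta]$, lifted to a global $1$-form satisfying \eqref{mnf}, which is possible since condition \eqref{mnf} is exactly the leafwise closedness recorded in \eqref{dvt}) to conclude that the Morse-Novikov-Treves complex is globally $L_{loc}^2$-solvable in degree $q' \geq q$. Finally, by Proposition \ref{isop} the reformulated complex $(\mathfrak{L}^*\MV^*, \rd''_\vartheta)$ is isomorphic to the complex \eqref{new complex1}, and under the identification $\MV^* \cong T^*\mathcal{F}$ with $\rd''_\vartheta = \dfv$ this is precisely the $L_{loc}^2$-leafwise Morse-Novikov complex \eqref{lmnc1}. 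Global solvability in degrees $q' \geq q$ is the statement $L_{loc}^2 H_{\mathcal{F},[\vartheta]}^{q'}(M) = 0$ for $q' \geq q$, completing the proof.

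The only genuine obstacle I anticipate is the bookkeeping in the second paragraph: one must be careful that the eigenvalue count in Definition \ref{qpqc}(ii) is taken on the subspace $\MV_\mfp \cap \mathrm{Ker}(\rd\varphi)$ rather than on all of $\MV_\mfp$, and confirm that at a leafwise critical point this subspace is the full fiber $\MV_\mfp$ so that the two eigenvalue conditions coincide. Once \eqref{ers} is invoked to identify $Q_{\varphi,\mfp}$ with the leafwise Hessian, this identification is immediate, and the remainder is a routine chain of the cited isomorphisms.
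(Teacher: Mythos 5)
Your proposal is correct and follows essentially the same route as the paper, which derives Corollary \ref{vlmnc1} directly from Proposition \ref{isop} and Corollary \ref{treves}; your verification that an essentially real structure is Levi flat, that $\mathcal{K}_\varphi$ coincides with the union of leafwise critical loci so that $\MV_{\mfp}\cap{\rm Ker}(\rd\varphi)=\MV_{\mfp}$ there, and that via \eqref{ers} the hypothesis on ${\rm Hess}_{\varphi|_{D}}$ is exactly $q$-convexity in the sense of Definition \ref{qpqc}(ii), simply makes explicit the translation the paper leaves implicit. The observation that any lift of a $\rd_{\mathcal{F},0}$-closed leafwise $1$-form satisfies \eqref{mnf} (since $\rd$ preserves the ideal generated by $N^*\MV$) is also correct.
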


\subsubsection{The \texorpdfstring{$L_{loc}^2$}{}-leafwise Morse-Novikov-Dolbeault complex}
Let $\MV$ be a Levi flat CR structure, according to the complex Frobenius theorem (see \cite{Nl57}), any $\lmfp\in M$ possess a coordinate chart
\begin{align*}
	(U;z,y):=(U;x_1+\sqrt{-1}x_{1+n},\cdots, x_{n}+\sqrt{-1}x_{2n},y_1,\cdots,y_{m-n})
\end{align*}
centered at $\lmfp$ such that
\begin{align*}
	\MV|_{_U}\ \text{is spanned by}\ \left\{\frac{\partial}{\partial\bar{z}_1},\cdots,\frac{\partial}{\partial\bar{z}_n}\right\}.
\end{align*}
Hence, $M$ is foliated by complex leaves (locally determined by $y_1=const.,\cdots,y_{m-n}=const.$) of complex dimension $n$. Such a foliation $\mathcal{F}$ is known as a \emph{complex foliation}. The complexified tangent bundle $\mathbb{C}T\mathcal{F}$ of $\mathcal{F}$ can be splitted by
\begin{align*}
	\mathbb{C}T\mathcal{F}=T^{1,0}\mathcal{F}\oplus T^{0,1}\mathcal{F},
\end{align*}
where $T^{1,0}\mathcal{F}$ is the holomorphic tangent bundle of $\mathcal{F}$ and $T^{0,1}\mathcal{F}$ is the anti-holomorphic tangent bundle of $\mathcal{F}$. It's clear that $T^{0,1}\mathcal{F}=\MV$ and
\begin{align*}
	\Lambda^{0,1}\mathbb{C}T^*\mathcal{F}:=(T^{0,1}\mathcal{F})^*=\MV^*\cong\mathbb{C}T^*M/N^*\MV.
\end{align*}
Similarly, we replace the notation $(\Lambda^*\MV^*,\rd_\vartheta'')$ with $(\Lambda^{0,*}\mathbb{C}T^*\mathcal{F},\dbf)$, to be consistent with the notation commonly used in foliation theory. Then the complex (\ref{lmnc}) implies the following complex
\begin{align}\label{lmnc2}
	L_{loc}^2(M,\Lambda^{0,0}\mathbb{C}T^*\mathcal{F})\stackrel{\dbf}{\longrightarrow}L_{loc}^2(M,\Lambda^{0,1}\mathbb{C}T^*\mathcal{F})\stackrel{\dbf}{\longrightarrow}L_{loc}^2(M,\Lambda^{0,2}\mathbb{C}T^*\mathcal{F})\stackrel{\dbf}{\longrightarrow}\cdots,
\end{align}
which is called \emph{the $L_{loc}^2$-leafwise Morse-Novikov-Dolbeault complex}. The $q$-th cohomology group $L_{loc}^2H_{\mathcal{F},[\vartheta]}^{0,q}(M)$ of the complex (\ref{lmnc2}) is called $q$-th \emph{leafwise Morse-Novikov-Dolbeault $L_{loc}^2$-cohomology group}.

Proposition \ref{isop} and Corollary \ref{treves} also imply a vanishing result for $L_{loc}^2H_{\mathcal{F},[\vartheta]}^{0,q}(M)$.
\begin{cor}\label{vlmnc2}
	Let $\mathcal{F}$ be a complex foliation on  $M$, and let $[\vartheta]$ be a leafwise $(0,1)$-form satisfying $\bar{\partial}_{\mathcal{F},0}[\vartheta]=0$. For $q\geq1$, suppose that there exists a smooth exhaustion function $\varphi$ on $M$ such that for each leaf $F$,
	\begin{align*}
		\text{the restriction of}\ \sqrt{-1}\partial\bar{\partial}(\varphi|_{_F})\ \text{to}\ T^{0,1}F\cap{\rm Ker}(\rd\varphi)
	\end{align*}
	has at least
	\begin{align*}
		\dim_{\mathbb{C}}(T^{0,1}F\cap{\rm Ker}(\rd\varphi))-q+1\ \text{positive eigenvalues}.
	\end{align*}
	Then $L_{loc}^2H_{\mathcal{F},[\vartheta]}^{0,q'}(M)=0$ for $q'\geq q$.
\end{cor}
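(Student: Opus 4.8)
The plan is to recognize that, for a Levi flat CR structure, the hypothesis on $\varphi$ is precisely the $q$-convexity of $\varphi$ with respect to $\MV$ in the sense of Definition \ref{qpqc}(ii), and that the hypothesis on $[\vartheta]$ produces a global $1$-form satisfying (\ref{mnf}); once this dictionary is set up, the conclusion follows by transporting the solvability of Corollary \ref{treves} along the isomorphism of complexes furnished by Proposition \ref{isop}. All the analytic work is already contained in Theorem \ref{t1}; what remains is bookkeeping.

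First I would unwind the convexity hypothesis. Fix $\lmfp\in M$ and take complex Frobenius coordinates $(U;z,y)$ in which $\MV|_U=\mathrm{span}\{\partial/\partial\bar z_1,\dots,\partial/\partial\bar z_n\}$, so that $\MV=T^{0,1}\mathcal{F}$ and the leaf $D$ through $\lmfp$ is $\{y=\mathrm{const}\}$ with $T^{0,1}D=\MV|_D$. With the frame $X_j=\partial/\partial\bar z_j$ the brackets $[X_j,\bar X_k]$ vanish, so in (\ref{cbl}) one may take $e^l_{jk}=0$; by (\ref{qf.}) the quadratic form $Q_{\varphi,\mfp}$ on $\MV_\mfp$ then reduces to $\mathrm{Re}\sum_{j,k}(\partial^2\varphi/\partial\bar z_j\partial z_k)(\lmfp)\,\xi_j\bar\xi_k$, i.e.\ to $\sqrt{-1}\partial\bar\partial(\varphi|_D)$ acting on $T^{0,1}_{\lmfp}D$, exactly as recorded in (\ref{qoe.}). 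The essential simplification is that $\MV\cap\overline{\MV}=0$ for a CR structure, so the defining condition of $\mathcal{K}_\varphi$ is vacuous and $\mathcal{K}_\varphi=M$. Hence requiring, at every point, that $\sqrt{-1}\partial\bar\partial(\varphi|_D)$ restricted to $T^{0,1}D\cap\mathrm{Ker}(\rd\varphi)$ have at least $\dim_{\mathbb{C}}(T^{0,1}D\cap\mathrm{Ker}(\rd\varphi))-q+1$ positive eigenvalues is identical to the assertion that $\varphi$ is a $q$-convex exhaustion function with respect to $\MV$.

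Next I would handle the Morse--Novikov datum. Given the leafwise $(0,1)$-form $[\vartheta]$ with $\bar\partial_{\mathcal{F},0}[\vartheta]=0$, choose a smooth splitting $s\colon\MV^*\hookrightarrow\mathbb{C}T^*M$ of $\pi_1$ and set $\vartheta:=s([\vartheta])$, a global $1$-form with $\pi_1(\vartheta)=[\vartheta]$. Since the untwisted leafwise operator $\bar\partial_{\mathcal{F},0}=\rd_0''$ is induced from $\rd$ through the projections $\pi_\bullet$ and the isomorphisms $\tilde\Phi_\bullet$ of (\ref{compare1}), one has $\pi_2(\rd\vartheta)=\bar\partial_{\mathcal{F},0}[\vartheta]=0$; as $\mathrm{Ker}\,\pi_2=\Lambda_\MV^{1,1}$ this says $\rd\vartheta\equiv0\bmod C^\infty(M,N^*\MV)$, i.e.\ $\vartheta$ satisfies (\ref{mnf}). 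With $\varphi$ now a $q$-convex exhaustion function and $\vartheta$ satisfying (\ref{mnf}), Corollary \ref{treves} makes $(\mathfrak{L}_\MV^\bullet,\rd_\vartheta')$ globally $L^2_{loc}$-solvable in every degree $q'\geq q$. Transporting this along $\tilde\Phi_\bullet$ (Proposition \ref{isop}), and recalling that in the CR case $(\mathfrak{L}^\bullet\MV^*,\rd_\vartheta'')$ is exactly the complex (\ref{lmnc2}) computing $L^2_{loc}H^{0,\bullet}_{\mathcal{F},[\vartheta]}(M)$, we obtain $L^2_{loc}H^{0,q'}_{\mathcal{F},[\vartheta]}(M)=0$ for $q'\geq q$.

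The only genuinely delicate step I expect is the first one: verifying on the nose that the eigenvalue-and-dimension count of Definition \ref{qpqc}(ii) coincides with the bound in the statement, together with the observation $\mathcal{K}_\varphi=M$ that removes any restriction to a critical locus and lets the hypothesis be imposed at every point. The lifting of $[\vartheta]$ and the transport of solvability are then formal, since $\tilde\Phi_\bullet$ is an isomorphism of complexes respecting the $L^2_{loc}$-structure.
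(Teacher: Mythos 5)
Your proposal is correct and follows the same route as the paper, which obtains this corollary directly from Proposition \ref{isop} and Corollary \ref{treves}. The bookkeeping you supply --- that $\mathcal{K}_\varphi=M$ for a CR structure (since $\MV\cap\overline{\MV}=0$ makes the defining condition vacuous), so the hypothesis is exactly the $q$-convexity of Definition \ref{qpqc}(ii), and that a lift $\vartheta$ of $[\vartheta]$ satisfies (\ref{mnf}) --- is precisely what the paper leaves implicit.
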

When $\vartheta\equiv0$, the complex (\ref{lmnc2}) restricts to
\begin{align*}
	C^\infty(M,\Lambda^{0,0}\mathbb{C}T^*\mathcal{F})\stackrel{\bar{\partial}_\mathcal{F}}{\longrightarrow}C^\infty(M,\Lambda^{0,1}\mathbb{C}T^*\mathcal{F})\stackrel{\bar{\partial}_\mathcal{F}}{\longrightarrow}C^\infty(M,\Lambda^{0,2}\mathbb{C}T^*\mathcal{F})\stackrel{\bar{\partial}_\mathcal{F}}{\longrightarrow}\cdots,
\end{align*}
where $\bar{\partial}_\mathcal{F}:=\bar{\partial}_{\mathcal{F},0}$. This complex is known as the \emph{leafwise Dolbeault complex} of $(M,\mathcal{F})$, whose cohomology is denoted by $H_{\mathcal{F}}^{0,q}(M)$. A. El Kacimi Alaoui posed the following open question:
\begin{question}[=Question 2.10.4 in \cite{EKAa14}]
	Let $(M,\mathcal{F})$ be a complex foliation such that every leaf is a Stein manifold and closed in $M$. Is $H_\mathcal{F}^{0,q}(M)=0$ for $q\geq 1$?
\end{question}
Corollary \ref{vlmnc2} provides a partial affirmative answer to this question (i.e., the vanishing of the leafwise Dolbeault $L_{loc}^2$-cohomology groups) under the assumption that $M$ admits a smooth exhaustion function which is strictly plurisubharmonic along each leaf. For compact manifolds, a corresponding $C^k$-version of Corollary \ref{vlmnc2} can be established analogously by means of Theorem \ref{ct''}. In the case where the Levi flat CR structure is induced by a fiber bundle whose fibers are complex manifolds, a positive answer to this question is also obtained in Corollary 2.1 of \cite{CN23}.

\section{Logarithmic forms of elliptic structures}\label{logari.}
Let $\MV$ be an elliptic structure of rank $n$ over an $(m+n)$-dimensional manifold $M$. For a coordinate chart $(U;x_1,\cdots,x_{m+n})$ given by the complex Frobenius theorem (\cite{Nl57}), we set
\begin{align}\label{ncfthm}
	z=\left(x_\varrho+\sqrt{-1} x_{m+\varrho}\right)_{\varrho=1}^m,\ t=\left(x_{2m+\tau}\right)_{\tau=1}^{n-m},
\end{align}
then
\begin{align}\label{ncfthm1}
	\MV\ \text{is spanned, over}\ U,\ \text{by}\ \left\{\frac{\partial}{\partial\bar{z}_\varrho}, \frac{\partial}{\partial t_\tau}\right\}_{\begin{subarray}{}1\leq\varrho\leq m\\1\leq\tau\leq n-m \end{subarray}}.
\end{align}
For simplicity, a \emph{$\MV$-coordinate chart} centered at $\lmfp\in M$ will refer to such a chart $(U;z,t)$ with
\begin{align*}
	U\cong \Delta^m\times (-1,1)^{n-m}\ (\text{under the coordinate map}),\ z(\lmfp)=0,t(\lmfp)=0,
\end{align*}
where $\Delta\subseteq\mathbb{C}$ is the open unit disc. Obviously, we have:
\begin{itemize}
	\item Any $\MV$-coordinate chart $(\hat{z},\hat{t})$ over $U$ satisfies
	\begin{align*}
		\hat{z}=\phi(z),\ \hat{t}=\psi(z,\bar{z},t),
	\end{align*}
	where $\phi,\psi\in C^\infty(U)$ and $\phi(z)$ is holomorphic in $z$.
	\item For $p\geq0$, $\Lambda^p N^*\MV$ is a basic vector bundle (see (iii) in Example \ref{exa}) with a basic local frame (over $U$)
	\begin{align*}
		\left\{\rd z_{_{P}}\right\}_{|P|=p},
	\end{align*}
	here and hereafter we adopt the following multi-index convention:
	\begin{align*}
		P=(\varrho_{_1},\cdots,\varrho_{_a}),\ 1\leq\varrho_{_1}<\cdots<\varrho_{_a}\leq m,1\leq a\leq m,\ \rd z_{_P}=\rd z_{\varrho_{_1}}\wedge\cdots\wedge\rd z_{\varrho_{_a}}.
	\end{align*}
\end{itemize}
Let
\begin{align*}
	\Omega_\MV^p=\mathcal{O}_\MV(\Lambda^pN^*\MV),
\end{align*}
then for any $f\in C^\infty(U,\Lambda^pN^*\MV),$ $f\in\Gamma(U,\Omega_\MV^p)$ if and only if
\begin{align}\label{basicpform}
	f=\sum_{|P|=p}f_{_{P}}(z)\rd z_{_{P}},
\end{align}
with coefficients $f_{_{P}}(z)\in C^\infty(U)$ being holomorphic in $z$. Sections of $\Omega_\MV^p$ are called \emph{basic $p$-forms}. According to \eqref{basicpform}, the exterior derivative d maps basic $p$-forms to basic $(p+1)$-forms.
\begin{lemma}\label{intformula}
	Let $(U;z,t)$ be a $\MV$-coordinate chart centered at some $\lmfp\in U$, assumed to be starlike with respect to $\lmfp$.
	\begin{enumerate}
		\item[$(i)$] For $f\in\Gamma(U,\Omega_\MV^p)$ given by \eqref{basicpform}, if $\rd f=0$ and $p\geq 1$ then $f=\rd g$   for some $g\in\Gamma(U,\Omega_\MV^{p-1})$. In particular, The constant sheaf $\mathbb{C}$ has a resolution by sheaves of  basic forms $0\longrightarrow\mathbb{C}\hookrightarrow\mathcal{O}_\MV\overset{\rd}{\longrightarrow}\Omega_\MV^1\overset{\rd}{\longrightarrow}\Omega_\MV^2\overset{\rd}{\longrightarrow}\cdots.$
		\item[$(ii)$] Given $ 1\leq \varrho\leq m$ and a basic function $F\in C^\infty(U)$, if
		\begin{align*}
			z_1\cdots z_\varrho=0\ \Rightarrow\ F=0,
		\end{align*}
		then there is a unique basic function $G$ such that $F=z_1\cdots z_\varrho G$.
	\end{enumerate}
\end{lemma}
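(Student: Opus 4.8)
The plan is to reduce both parts to classical facts about holomorphic functions on the polydisc. By \eqref{ncfthm1}, a basic function on $U$ is one killed by $\partial/\partial\bar z_\varrho$ and $\partial/\partial t_\tau$, hence holomorphic in $z$ and independent of $t$; accordingly a basic $p$-form has the form \eqref{basicpform} with each $f_P$ holomorphic in $z$ and independent of $t$. For such coefficients $\rd f_P=\sum_\varrho(\partial f_P/\partial z_\varrho)\,\rd z_\varrho$, so on $\Gamma(U,\Omega_\MV^\bullet)$ the de Rham differential $\rd$ carries no $\rd\bar z$ or $\rd t$ contribution and agrees with the holomorphic differential in the $z$-variables. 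Thus $(i)$ becomes the holomorphic Poincar\'e lemma on the polydisc and $(ii)$ the holomorphic division lemma; the one point needing attention throughout is that the constructed objects stay \emph{basic}, i.e.\ holomorphic in $z$ and constant in $t$.

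For $(i)$ I would use the explicit radial homotopy. Identifying $\lmfp$ with the origin and writing $P=(p_1,\dots,p_p)$, set
\[
Kf=\sum_{|P|=p}\sum_{\nu=1}^{p}(-1)^{\nu-1}\Big(\int_0^1 s^{p-1}f_P(sz)\,\rd s\Big)z_{p_\nu}\,\rd z_{p_1}\wedge\cdots\wedge\widehat{\rd z_{p_\nu}}\wedge\cdots\wedge\rd z_{p_p}.
\]
Because $U\cong\Delta^m\times(-1,1)^{n-m}$, the scaling $(z,t)\mapsto(sz,t)$ maps $U$ into itself for $s\in[0,1]$, so each $f_P(sz)$ is defined and holomorphic in $z$; as integration in $s$ preserves holomorphicity and $t$-independence, $Kf\in\Gamma(U,\Omega_\MV^{p-1})$. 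The classical homotopy identity $\rd(Kf)+K(\rd f)=f$ for $p\geq1$, together with $\rd f=0$, gives $f=\rd(Kf)$, proving $(i)$ with $g=Kf$. For the resolution, $\mathbb{C}\hookrightarrow\mathcal{O}_\MV$ is the inclusion of constants; a basic $f$ with $\rd f=0$ has all $\partial f/\partial z_\varrho=0$ and hence is locally constant, giving exactness at $\mathcal{O}_\MV$, while exactness at $\Omega_\MV^p$ for $p\geq1$ is precisely the statement just proved on any starlike $\MV$-coordinate neighborhood.

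For $(ii)$ I would divide off the factors one at a time. Here $F=F(z)$ is holomorphic on $\Delta^m$ and independent of $t$, and the hypothesis says $F$ vanishes on $\bigcup_{j=1}^{\varrho}\{z_j=0\}$. Expanding $F$ as a power series in $z_1$ and using $F|_{z_1=0}=0$ gives $F=z_1F_1$ with $F_1$ holomorphic. On the dense subset $\{z_2=0\}\cap\{z_1\neq0\}$ one has $F_1=F/z_1=0$, so $F_1$ vanishes on all of $\{z_2=0\}$ by continuity, and the same expansion peels off $z_2$. Iterating $\varrho$ times yields $F=z_1\cdots z_\varrho\,G$ with $G$ holomorphic; since $F$ and $z_1\cdots z_\varrho$ are $t$-independent, so is $G$ on the dense set $\{z_1\cdots z_\varrho\neq0\}$ and hence everywhere, so $G$ is basic. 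Uniqueness is immediate, as two such quotients agree on $\{z_1\cdots z_\varrho\neq0\}$ and therefore coincide.

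The hard part --- really the only step beyond bookkeeping --- is the inductive division in $(ii)$: after removing $z_1$ one knows the quotient $F_1$ vanishes only on the dense subset $\{z_j=0\}\cap\{z_1\cdots z_{j-1}\neq0\}$, and one must promote this to vanishing on the whole hyperplane $\{z_j=0\}$ before the next division, which is where continuity of the holomorphic quotient is used. In $(i)$ the analogous care is merely to check that $Kf$, assembled from the integrals of $f_P(sz)$, has coefficients that are again holomorphic in $z$ and independent of $t$, so that the primitive truly lies in $\Omega_\MV^{p-1}$ and not just in the ambient smooth de Rham complex.
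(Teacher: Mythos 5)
Your proof is correct. Part (i) is essentially the paper's argument: you have simply written out the contraction $\xi\lrcorner(\cdot)$ with the radial field $\xi=\sum_\varrho z_\varrho\partial/\partial z_\varrho$ in components, and the only point of substance --- that the homotopy operator preserves basicness because $f_P(sz)$ stays holomorphic in $z$ and $t$-independent --- is checked in both versions. Part (ii) takes a genuinely different route. The paper obtains the division in one stroke from the identity
\begin{align*}
F(z)=\int_{[0,1]^{\varrho}}\frac{\rd}{\rd u_1}\cdots\frac{\rd}{\rd u_\varrho}F(u_1z_1,\cdots,u_\varrho z_\varrho,z')\,\rd u
	=z_1\cdots z_\varrho\int_{[0,1]^{\varrho}}\Big(\frac{\partial}{\partial z_1}\cdots\frac{\partial}{\partial z_\varrho}F\Big)(u_1z_1,\cdots,u_\varrho z_\varrho,z')\,\rd u,
\end{align*}
where the boundary terms vanish by the hypothesis; this produces $G$ as an explicit integral whose basicness is immediate from \eqref{ncfthm1}. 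You instead peel off one factor at a time by power-series division, which forces you to re-establish at each stage that the quotient $F_j$ vanishes on the full hyperplane $\{z_{j+1}=0\}$ rather than only on the dense piece where the previous divisors are nonzero; your continuity argument closes that gap correctly, and you rightly flag it as the delicate step. The paper's formula avoids this propagation issue entirely (and also yields uniqueness and basicness without a separate density argument), while your iteration is more elementary and makes transparent exactly which vanishing hypotheses are used at each step. Both are complete proofs.
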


\begin{proof}
	(i) This is clear from the homotopy formula for the Poincaré lemma: $f=\rd u$ where
	\begin{align*}
		u:=\xi\lrcorner\sum_{|P|=p}\left(\int_0^1\theta^{p-1}f_{_{P}}(\theta z)\rd \theta\right)\rd z_{_{P}}
	\end{align*}
	is a basic form by \eqref{basicpform}, and $\xi$ denotes the basic vector field $\sum\limits_{\varrho=1}^mz_\varrho\frac{\partial}{\partial z_\varrho}$ over $U$.
	
	(ii) By \eqref{ncfthm1}, $F=F(z)$ is holomorphic in $z$ and therefore
	\begin{align*}
		F(z)=\int_{[0,1]^{\varrho}}\frac{\rd}{\rd u_1}\cdots\frac{\rd}{\rd u_\varrho}F(u_1z_1,\cdots,u_\varrho z_\varrho,z')\rd u= z_1\cdots z_\varrho G, 
	\end{align*}
	where $z':=(z_{\varrho+1},\cdots,z_d)$ and
	\begin{align*}
		G=\int_{[0,1]^{\varrho}}\bigg(\frac{\partial}{\partial z_1}\cdots\frac{\partial}{\partial z_\varrho}F\bigg)(u_1z_1,\cdots,u_\varrho z_\varrho,z')\rd u
	\end{align*}
	is immediately basic in view of \eqref{ncfthm1}.
\end{proof}

Corollary \ref{local existence1}, combined with (i) in the above lemma, yields
\begin{cor}[=Corollary \ref{sc} (i)]
	If $M$ admits a smooth $1$-convex exhaustion function with respect to an elliptic structure $\MV$, then for $p\geq 0,$
	\begin{align*}
		H^p(M,\mathbb{C}) =\frac{\mathrm{Ker}\left(\Gamma(M,\Omega_\MV^p)\overset{\rd}{\longrightarrow}\Gamma(M,\Omega_\MV^{p+1})\right)}{\mathrm{Im}\left(\Gamma(M,\Omega_\MV^{p-1})\overset{\rd}{\longrightarrow}\Gamma(M,\Omega_\MV^{p})\right)},
	\end{align*}
	where $\Omega_\MV^{-1}:=0.$ In particular, $H^p(M,\mathbb{C}) =0$ for $p\geq m+1$.
\end{cor}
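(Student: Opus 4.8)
The plan is to realize $H^\bullet(M,\mathbb{C})$ as the cohomology of the complex of global basic forms by exhibiting the basic de Rham complex as an \emph{acyclic} resolution of the constant sheaf, and then invoking the abstract de Rham (de Rham--Weil) theorem. The two ingredients are precisely those flagged: the resolution furnished by Lemma \ref{intformula} (i) and the vanishing supplied by Corollary \ref{local existence1}.

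First I would take the resolution
\[
0\longrightarrow\mathbb{C}\hookrightarrow\Omega_\MV^0\overset{\rd}{\longrightarrow}\Omega_\MV^1\overset{\rd}{\longrightarrow}\Omega_\MV^2\overset{\rd}{\longrightarrow}\cdots
\]
from Lemma \ref{intformula} (i), where $\Omega_\MV^0=\mathcal{O}_\MV$ and $\Omega_\MV^p=\mathcal{O}_\MV(\Lambda^pN^*\MV)$. The next step is to verify that every term is acyclic for the global–sections functor. Since $\MV$ is elliptic it is locally integrable, so each $\Lambda^pN^*\MV$ is a basic vector bundle (Example \ref{exa} (iii)); moreover, by elliptic regularity $\mathcal{S}_\MV(\Lambda^pN^*\MV)=\mathcal{O}_\MV(\Lambda^pN^*\MV)=\Omega_\MV^p$ (the Remark after Corollary \ref{local existence1}). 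Feeding the hypothesis that $M$ admits a smooth $1$-convex exhaustion function into Corollary \ref{local existence1} (i) with $q=1$ and $E=\Lambda^pN^*\MV$ then yields $H^{q'}(M,\Omega_\MV^p)=0$ for all $q'\geq1$ and all $p\geq0$, i.e. each $\Omega_\MV^p$ is acyclic.

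With acyclicity in hand I would run the standard dimension-shifting argument. Setting $\mathcal{Z}^k:=\ker(\rd\colon\Omega_\MV^k\to\Omega_\MV^{k+1})$, exactness of the resolution splits it into short exact sequences $0\to\mathcal{Z}^k\to\Omega_\MV^k\xrightarrow{\rd}\mathcal{Z}^{k+1}\to0$ with $\mathcal{Z}^0=\mathbb{C}$. The associated long exact cohomology sequences, combined with $H^{q}(M,\Omega_\MV^k)=0$ for $q\geq1$, give the isomorphisms $H^{q+1}(M,\mathcal{Z}^k)\cong H^q(M,\mathcal{Z}^{k+1})$ for $q\geq1$; iterating reduces $H^p(M,\mathbb{C})=H^p(M,\mathcal{Z}^0)$ to $H^1(M,\mathcal{Z}^{p-1})$, and the short exact sequence $0\to\mathcal{Z}^{p-1}\to\Omega_\MV^{p-1}\to\mathcal{Z}^p\to0$ identifies this with $\Gamma(M,\mathcal{Z}^p)/\rd\,\Gamma(M,\Omega_\MV^{p-1})$. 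Since $\Gamma(M,\mathcal{Z}^p)=\ker\big(\Gamma(M,\Omega_\MV^p)\xrightarrow{\rd}\Gamma(M,\Omega_\MV^{p+1})\big)$, this is exactly the asserted de Rham-type quotient, the degenerate cases $p=0,1$ being read off directly from the first two short exact sequences (with the convention $\Omega_\MV^{-1}=0$).

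Finally, for the vanishing statement I would only observe that $\mathrm{rank}_\mathbb{C}N^*\MV=m$, so $\Lambda^pN^*\MV=0$ and hence $\Omega_\MV^p=0$ once $p\geq m+1$; the numerator of the quotient then vanishes, giving $H^p(M,\mathbb{C})=0$ for $p\geq m+1$. I expect no genuine obstacle here, since the substantive analytic content has already been packaged into Lemma \ref{intformula} and Corollary \ref{local existence1}; the only point requiring care is the bookkeeping in the dimension-shifting step and the separate low-degree treatment, which is purely formal.
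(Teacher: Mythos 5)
Your proposal is correct and follows essentially the same route as the paper: the paper's proof is precisely "Corollary \ref{local existence1} combined with Lemma \ref{intformula}(i)," i.e.\ the acyclic resolution of $\mathbb{C}$ by the sheaves $\Omega_\MV^p$ together with the abstract de Rham theorem, with the identification $\mathcal{S}_\MV(\Lambda^pN^*\MV)=\Omega_\MV^p$ via elliptic regularity exactly as you note. Your spelled-out dimension-shifting and the observation that $\Omega_\MV^p=0$ for $p\geq m+1$ are just the standard details the paper leaves implicit.
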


\begin{definition}\label{basichyper}
	 Let $\MV$ be an elliptic structure and let $D\subseteq M$ be a closed nowhere dense subset. If for each  $\lmfp\in D$, there is a smooth basic function $F$ defined on an open neighborhood $U$ of $\lmfp$ such that $D\cap U=F^{-1}(0)$, then $D$ is called a basic hypersurface of $M$ $($with respect to $\MV)$. 
\end{definition}


Since the local ring of complex analytic functions is a U.F.D., there exists a minimal defining function for any basic hypersurface of $M$. By definition, the minimal defining function is unique up to multiplication by a nowhere vanishing basic function, and this observation enables us introduce the notion of logarithmic forms. Let $D\subseteq M$ be a basic hypersurface, for $p\in\mathbb{Z}_{\geq 0}$, we define the presheaf $\Omega^p_\MV(\log D)$ of logarithmic $p$-forms as follows:
\begin{itemize}
	\item For $\lmfp\in M\setminus D$, set 
	\begin{align*}
		\big(\Omega^p_\MV(\log D)\big)(U):=\Gamma(U,\Omega^p_\MV),
	\end{align*}
	where $U\subseteq M\setminus D$ is any open neighborhood of $\lmfp$.
	\item For $\lmfp\in D$, set
	\begin{align*}
		\big(\Omega^p_\MV(\log D)\big)(U):=&\big\{f\in\Gamma(U\setminus D,\Omega^p_\MV)\ |\ Ff\in\Gamma(U,\Omega^{p}_\MV), F\rd f\in\Gamma(U,\Omega^{p+1}_\MV)\big\}\\ =&\big\{f\in\Gamma(U\setminus D,\Omega^p_\MV)\ |\ Ff\in\Gamma(U,\Omega^p_\MV), \rd F\wedge f\in\Gamma(U,\Omega^{p+1}_\MV)\big\},
	\end{align*}
	where $U$ is any open neighborhood of $\lmfp$ with $F\in C^\infty(U)$ being a minimal defining function for $D$ over $U$.
\end{itemize}
Clearly, the above data defines a complete presheaf over $M$. Denote by the same symbol $\Omega^p_\MV(\log D)$ the sheaf generated by it, and refer to the sections of this sheaf as \emph{logarithmic $p$-forms along $D$}.

\begin{definition}
	A basic hypersurface $D$ is said to be normal crossing if for each $\lmfp\in D$ there exists a $\MV$-coordinate chart $(U;z,t)$ centered at $\lmfp$ such that \begin{align}\label{coordnc}
		D\cap U=\big\{\lmfp'\in U\ |\ (z_1\cdots z_{a})(\lmfp')=0\big\},
	\end{align}
	where $0\leq a\leq m$ depending on $\lmfp\in D$. 
\end{definition}
If $D$ is a normal crossing basic hypersurface, by (ii) in Lemma \ref{intformula},
\begin{align*}
	F:=z_1\cdots z_a
\end{align*}
is a minimal defining function for $D$ over $U$, and therefore
\begin{align}\label{logpform4}
	\Omega^0_\MV(\log D)=\mathcal{O}_\MV.
\end{align}
Now assume $p\geq 1$, for every $f\in \Gamma(U,\Omega_\MV^p(\log D))$, we have $g:=Ff\in\Gamma(U,\Omega^{p}_\MV)$ and
\begin{align}\label{logpform2}
	\bigg(\sum_{\varrho=1}^az_{1}\cdots\widehat{z_{{\varrho}}}\cdots z_{a}\rd z_{{\varrho}}\bigg)\wedge g&=F\frac{\rd F}{F}\wedge{g}=F\rd F\wedge{f}\in F\cdot\Gamma(U,\Omega^{p+1}_\MV),
\end{align}
where $\hat{\cdot}$ means the term is omitted. For multi-indices $P$ and $I$, set
\begin{align*}
	P'=(1,\cdots,a)\cap P,\ P''=(1,\cdots,a)\setminus P.
\end{align*}
Given multi-indices $P$ with $|P|=p+1$, the coefficient of $\rd z_{_{P}}$ on the left hand side of \eqref{logpform2} is given by
\begin{align*}
	\sum_{\varrho\in P'}\mathrm{sgn}(\varrho P'\setminus\{\varrho\})z_{1}\cdots\widehat{z_{{\varrho}}}\cdots z_{a}g_{_{P\setminus\{\varrho\}}},
\end{align*}
where $\mathrm{sgn}(\cdot)$ denotes the sign of a permutation, which, together with (ii) in Lemma \ref{intformula}, yields that $g_{_{P\setminus\{\varrho\}}}$ is divisible in $\Gamma(U,\mathcal{O}_\MV)$ by $z_{{\varrho}}$ for each $\varrho\in P'$. 
In other words, the above discussion demonstrates that the coefficients of $g=Ff\in \Gamma(U,\Omega_\MV^p)$ possess the following properties:
\begin{align*}
	g_{_{P}}\in\bigg(\mathsmaller{\prod}\limits_{\varrho\in P''}z_{{\varrho}}\bigg)\cdot\Gamma(U,\mathcal{O}_\MV),
\end{align*}
i.e.,
\begin{align*}
	f_{_{P}}=F^{-1}g_{_{P}}\in\bigg(\mathsmaller{\prod}\limits_{\varrho\in P'}z_{{\varrho}}\bigg)^{-1}\cdot\Gamma(U,\mathcal{O}_\MV),
\end{align*}
for all multi-indices $P$ with $|P|=p.$ In summary, $\Omega_\MV^p(\log D)|_{_U}$ is generated, over $\Gamma(U,\mathcal{O}_\MV)$, by 
\begin{align}
	\left\{\frac{\rd z_{1}}{z_{1}},\cdots,\frac{\rd z_{a}}{z_{a}},\rd z_{{a+1}},\cdots,\rd z_{m},\right\}.\label{logpform3}
\end{align}
Together with \eqref{logpform4}, the sheaf $\Omega^p_\MV(\log D)$ is locally free over $\mathcal{O}_\MV$ $(p\geq 0)$ and thus could be viewed as a basic vector bundle over $M$ whose local basic frame is provided by \eqref{logpform3}. Consequently, Corollary \ref{local existence1} implies
\begin{prop}
	If $D$ is a normal crossing basic hypersurface, and $M$ admits a smooth $1$-convex exhaustion function with respect to the elliptic structure $\MV$, then
	\begin{align*}
		H^q\big(M,\Omega^p_\MV(\log D)\big)=0,\ \forall p\geq 0,\ \forall q\geq 1.
	\end{align*}
	The hypercohomology of the complex
	\begin{align*}
		0\longrightarrow \mathcal{O}_\MV\overset{\rd}{\longrightarrow} \Omega^1_\MV(\log D)\overset{\rd}{\longrightarrow} \Omega^2_\MV(\log D)\overset{\rd}{\longrightarrow}\cdots
	\end{align*}
	is thereby determined by
	\begin{align}\label{logpform6}
		\mathbb{H}^p\big(M,\Omega_\MV^*(\log D)\big) =\frac{\mathrm{Ker}\left(\Gamma\big(M,\Omega_\MV^p(\log D)\big)\overset{\rd}{\longrightarrow}\Gamma\big(M,\Omega_\MV^{p+1}(\log D)\big)\right)}{\mathrm{Im}\left(\Gamma\big(M,\Omega_\MV^{p-1}(\log D)\big)\overset{\rd}{\longrightarrow}\Gamma\big(M,\Omega_\MV^{p}(\log D)\big)\right)},\ \forall p\geq 0.
	\end{align}
\end{prop}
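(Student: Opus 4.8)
The plan is to treat the two assertions separately, leveraging that the paragraph preceding the statement already realises $\Omega^p_\MV(\log D)$ as the sheaf $\mathcal{O}_\MV(E_p)$ of smooth basic sections of a basic vector bundle $E_p$ over $M$, whose local basic frame over each $\MV$-coordinate chart is the one displayed in (\ref{logpform3}) (with $\Omega^0_\MV(\log D)=\mathcal{O}_\MV$ by (\ref{logpform4})).

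For the vanishing $H^q(M,\Omega^p_\MV(\log D))=0$ with $q\geq1$, I would apply Corollary \ref{local existence1} (i) to the basic vector bundle $E_p$. Since $M$ carries a smooth $1$-convex exhaustion function with respect to $\MV$, taking the index $1$ there yields $H^{q'}(M,\mathcal{S}_\MV(E_p))=0$ for all $q'\geq1$. Because $\MV$ is elliptic, the elliptic regularity recorded in the remark following Corollary \ref{local existence1} identifies $\mathcal{S}_\MV(E_p)=\mathcal{O}_\MV(E_p)=\Omega^p_\MV(\log D)$, so the asserted vanishing follows for every $p\geq0$.

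For the hypercohomology computation, I would feed this vanishing into the second hypercohomology spectral sequence of the complex of sheaves $(\Omega^*_\MV(\log D),\rd)$,
\[
	E_1^{p,q}=H^q\big(M,\Omega^p_\MV(\log D)\big)\ \Longrightarrow\ \mathbb{H}^{p+q}\big(M,\Omega^*_\MV(\log D)\big),
\]
whose $d_1$ differential on the bottom row is induced by $\rd$ acting on global sections. The first part forces $E_1^{p,q}=0$ for $q\geq1$, so the page is concentrated in the single row $q=0$, where $E_1^{p,0}=\Gamma(M,\Omega^p_\MV(\log D))$. Consequently $E_2^{p,0}$ coincides with the right-hand side of (\ref{logpform6}), the spectral sequence degenerates at $E_2$ for lack of room, and $\mathbb{H}^p\big(M,\Omega^*_\MV(\log D)\big)=E_2^{p,0}$ is exactly (\ref{logpform6}).

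The argument is essentially formal once the sheaf-level vanishing is established; the only points deserving care---both already in place---are the identification $\Omega^p_\MV(\log D)=\mathcal{O}_\MV(E_p)$ and the elliptic-regularity passage $\mathcal{S}_\MV=\mathcal{O}_\MV$ that lets Corollary \ref{local existence1} apply verbatim. I therefore do not anticipate a genuine obstacle: the substantive geometric content was expended in the preceding paragraph showing that logarithmic forms constitute a basic vector bundle, after which the collapse of the spectral sequence onto its bottom row is automatic.
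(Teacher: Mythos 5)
Your proposal is correct and follows essentially the same route as the paper: the preceding paragraph realises $\Omega^p_\MV(\log D)$ as a basic vector bundle via the local frame (\ref{logpform3}), Corollary \ref{local existence1}(i) with the elliptic-regularity identification $\mathcal{S}_\MV=\mathcal{O}_\MV$ gives the sheaf-cohomology vanishing, and the hypercohomology formula is the standard degeneration of the spectral sequence onto its bottom row.
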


Analogous to the classical case of complex structures, we will proceed to prove that the inclusion homomorphism defines a quasi-isomorphism
\begin{align*}
	\Omega_\MV^*(\log D)\rightarrow i_*\mathcal{A}^*_{M\setminus D},
\end{align*}
where $D\subseteq M$ is a normal crossing basic hypersurface, $i:M\setminus D\hookrightarrow M$ is the inclusion map and $(\mathcal{A}^*_{M\setminus D},\rd)$ denotes the de Rham complex of $M\setminus D$. This amounts to showing that the inclusion homomorphism induces an isomorphism on cohomology sheaves between $\mathcal{H}^*\big(\Omega_\MV^*(\log D)\big)$ and $\mathcal{H}^*\big(i_*\mathcal{A}^*_{M\setminus D}\big)$. Note that for any $\lmfp\in M\setminus D$,
\begin{align}
	\mathcal{H}^p\big(\Omega_\MV^*(\log D)\big)_{\mfp}&=
	\begin{cases}
		\mathbb{C},&\ p=0,\\
		0,&\ p\geq 1\ (\text{by Corollaries}\ \ref{local existence}\ \text{and}\ \ref{ros}),
	\end{cases}\label{quasi1}\\ \mathcal{H}^{p}\big(i_*\mathcal{A}^*_{M\setminus D}\big)_{\mfp}&=
	\begin{cases}
		\mathbb{C},&\ p=0,\\
		0,&\ p\geq 1\ (\text{by the Poincar\'e Lemma}),
	\end{cases} \label{quasi2}
\end{align}
we only need to consider $\lmfp\in D$.

Let $(U;z,t)$ be a $\MV$-coordinate chart centered at some $\lmfp\in D$ satisfying \eqref{coordnc}, then the coordinate map gives
\begin{align}\label{coordnc1}
	U\setminus D\cong \Delta_*^a\times\Delta^{m-a}\times 
	(-1,1)^{n-m},
\end{align}
where $\Delta_*:=\Delta\setminus\{0\}$. From the K\"unneth formula and the de Rham theorem, it is easy to see from \eqref{coordnc1} that
\begin{align}\label{qusi1}
	\mathcal{H}^p\big(i_*\mathcal{A}^*_{M\setminus D}\big)_{\mfp}&=\varinjlim_{U\ni \mfp}\frac{\mathrm{Ker}\left(\Gamma\big(U,i_*\mathcal{A}^p_{M\setminus D}\big)\overset{\rd}{\longrightarrow}\Gamma\big(U,i_*\mathcal{A}^{p+1}_{M\setminus D}\big)\right)}{\mathrm{Im}\left(\Gamma\big(U,i_*\mathcal{A}^{p-1}_{M\setminus D}\big)\overset{\rd}{\longrightarrow}\Gamma\big(U,i_*\mathcal{A}^p_{M\setminus D}\big)\right)}\notag\\
	&=\varinjlim_{U\ni \mfp}H^p(U\setminus D,\mathbb{C}) \notag\\ &=\bigwedge^p\mathrm{span}_\mathbb{C}\left\{\left[\frac{\rd z_\varrho}{z_\varrho}\right]\right\}_{\varrho=1}^a.
\end{align}
By \eqref{logpform4}, we have
\begin{align}
	\mathcal{H}^0\left(\Omega_\MV^*(\log D)\right)_{\mfp}=\mathbb{C},\label{quai2}
\end{align}
it remains to consider $p\geq 1$. As
\begin{align*}
	\mathcal{H}^p\big(\Omega_\MV^*(\log D)\big)_{\mfp}=\varinjlim_{U\ni \mfp}\frac{\mathrm{Ker}\left(\Gamma\big(U,\Omega_\MV^{p}(\log D)\big)\overset{\rd}{\longrightarrow}\Gamma\big(U,\Omega_\MV^{p+1}(\log D)\right)}{\mathrm{Im}\left(\Gamma\big(U,\Omega_\MV^{p-1}(\log D)\big)\overset{\rd}{\longrightarrow}\Gamma\big(U,\Omega_\MV^{p}(\log D)\big)\right)},
\end{align*}
we first prove the following lemma.

\begin{lemma}\label{quai3}
	Let $D\subseteq M$ be a basic normal crossing basic hypersurface, for every
	\begin{align*}
		f\in\mathrm{Ker}\left(\Gamma\big(U,\Omega_\MV^{p}(\log D)\big)\overset{\rd}{\longrightarrow}\Gamma\big(U,\Omega_\MV^{p+1}(\log D)\big)\right),
	\end{align*}
	there exists some $f_{_{\text{const}}}\in\bigwedge^p\mathrm{span}_\mathbb{C}\left\{\frac{\rd z_\varrho}{z_\varrho}\right\}_{\varrho=1}^a$ such that $f\equiv f_{_{\text{const}}}\mod\ \rd\Gamma\big(U,\Omega_\MV^{p-1}(\log D)\big)$.
\end{lemma}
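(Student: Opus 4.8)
The plan is to reduce the statement to the holomorphic logarithmic Poincaré lemma on the polydisc and to prove the latter by an explicit integration homotopy, inducting on the number $a$ of branches of $D$ through $\lmfp$ appearing in \eqref{coordnc}. First I would fix the $\MV$-coordinate chart $(U;z,t)$ and use the local frame \eqref{logpform3} to write any logarithmic form as
\[
f=\sum_{J,K} f_{J,K}\,\Big(\textstyle\bigwedge_{\varrho\in J}\tfrac{\rd z_\varrho}{z_\varrho}\Big)\wedge \rd z_K,\qquad J\subseteq\{1,\dots,a\},\ K\subseteq\{a+1,\dots,m\},
\]
where each coefficient $f_{J,K}$ is basic, hence by \eqref{ncfthm1} holomorphic in $z=(z_1,\dots,z_m)$ and independent of $t$ and $\bar z$. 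Consequently $\rd$ acts on such forms as $\sum_\mu \rd z_\mu\wedge\partial_{z_\mu}$ (cf.\ the remark after \eqref{basicpform}), and the whole problem takes place in the holomorphic logarithmic de Rham complex of the polydisc $\Delta^m$ relative to the normal crossing $z_1\cdots z_a=0$; the $t$-directions play no role.

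For the inductive step I would single out the last log-direction and write $f=\frac{\rd z_a}{z_a}\wedge\beta+\gamma$, where $\beta,\gamma$ are logarithmic forms along $D$ containing neither $\frac{\rd z_a}{z_a}$ nor $\rd z_a$, but whose coefficients are still holomorphic in $z_a$. Expanding $\rd f=0$ and separating the parts with and without $\rd z_a$ yields the two relations $\rd'\gamma=0$ and $z_a\,\partial_{z_a}\gamma=\rd'\beta$, where $\rd':=\sum_{\mu\neq a}\rd z_\mu\wedge\partial_{z_\mu}$. Writing the $z_a$-power series $\beta=\sum_{k\ge0}\beta_k z_a^k$ and $\gamma=\sum_{k\ge0}\gamma_k z_a^k$, the second relation gives $\rd'\beta_0=0$ and $\gamma_k=\frac1k\,\rd'\beta_k$ for $k\ge1$. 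The key computation is then that the logarithmic primitive $B:=\sum_{k\ge1}\frac{z_a^k}{k}\beta_k$ — whose coefficients converge and stay holomorphic on $\Delta^m$, since dividing Taylor coefficients by $k$ does not shrink the domain — satisfies
\[
f-\rd B=\tfrac{\rd z_a}{z_a}\wedge\beta_0+\gamma_0,
\]
all the $z_a$-dependent terms cancelling exactly by the two relations. Now $\beta_0,\gamma_0$ are $\rd'$-closed logarithmic forms independent of $z_a$ on the smaller polydisc with $a-1$ branches, so the induction hypothesis replaces each by a constant combination of $\frac{\rd z_1}{z_1},\dots,\frac{\rd z_{a-1}}{z_{a-1}}$ modulo $\rd'$-exact logarithmic forms; wedging the $\beta_0$-part with $\frac{\rd z_a}{z_a}$ and using $\frac{\rd z_a}{z_a}\wedge\rd'\eta=-\rd\big(\frac{\rd z_a}{z_a}\wedge\eta\big)$ for $z_a$-independent $\eta$ turns these into genuine $\rd$-exact logarithmic forms along $D$. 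This produces the required $f_{\text{const}}\in\bigwedge^p\mathrm{span}_\mathbb{C}\{\frac{\rd z_\varrho}{z_\varrho}\}_{\varrho=1}^a$. The base case $a=0$, where $D\cap U=\emptyset$ and logarithmic forms are ordinary basic forms, is precisely the Poincaré lemma of Lemma \ref{intformula}(i), giving $f_{\text{const}}=0$ for $p\ge1$.

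The main obstacle I anticipate is bookkeeping rather than analysis: one must check that every primitive introduced is again a logarithmic form along $D$ with basic (holomorphic) coefficients — in particular that $B$ converges on the full chart — and that the residue at $z_a=0$ is exactly what survives, so that the two closedness relations make the $z_a$-dependent contributions of $\frac{\rd z_a}{z_a}\wedge\beta$ and of $\gamma$ annihilate one another. A cleaner but less self-contained alternative would be to identify $\rd$ with the Koszul-type differential $\sum_{\varrho\le a}\frac{\rd z_\varrho}{z_\varrho}\wedge(z_\varrho\partial_{z_\varrho})+\sum_{\mu>a}\rd z_\mu\wedge\partial_{z_\mu}$, decompose the holomorphic coefficients into monomial eigenspaces of the commuting operators $z_\varrho\partial_{z_\varrho}$, and apply Künneth: the $\mu>a$ factor is acyclic in positive degree by the ordinary Poincaré lemma, while on each monomial eigenspace the remaining factor is the Koszul complex for wedging with the constant covector $\sum_\varrho\nu_\varrho\frac{\rd z_\varrho}{z_\varrho}$, which is acyclic unless $\nu=0$ and otherwise contributes exactly $\bigwedge^\ast\mathrm{span}_\mathbb{C}\{\frac{\rd z_\varrho}{z_\varrho}\}_{\varrho=1}^a$.
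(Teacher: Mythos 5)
Your proof is correct, and it follows the same overall skeleton as the paper's: induction on the number $a$ of branches through the centre of the chart, splitting off the factor $\frac{\rd z_a}{z_a}$ and reducing to the hypersurface $D'=\{z_1\cdots z_{a-1}=0\}$. Where you differ is in how the $z_a$-dependence is eliminated in the inductive step. The paper normalizes the residue term first: it writes $f=\frac{\rd z_a}{z_a}\wedge f'+f''$ with the coefficients of $f'$ \emph{already} evaluated at $z_a=0$ (the difference, being divisible by $z_a$ by Lemma \ref{intformula}(ii), is absorbed into $f''$); then $\rd f=0$ forces $\rd f'=0$ and $\rd f''=0$ separately, and the induction hypothesis for $D'$ is applied to \emph{both} pieces — crucially, $f''$ is allowed to retain its $z_a$-dependence, since $z_a$ is just one of the non-logarithmic variables for $D'$. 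You instead keep the full $z_a$-dependence in $\beta$ and $\gamma$, extract the coupled relations $\rd'\gamma=0$, $z_a\partial_{z_a}\gamma=\rd'\beta$, and build the explicit logarithmic primitive $B=\sum_{k\ge1}\frac{z_a^k}{k}\beta_k$ to reduce to the $z_a$-independent pair $(\beta_0,\gamma_0)$. This costs you the convergence check for $B$ (which you handle correctly — dividing Taylor coefficients by $k$ preserves the domain of holomorphy) and a bit of bookkeeping that the paper's normalization makes unnecessary, but it buys a completely explicit primitive and, in your Koszul/monomial alternative, a cleaner conceptual picture of why exactly $\bigwedge^p\mathrm{span}_\mathbb{C}\left\{\frac{\rd z_\varrho}{z_\varrho}\right\}_{\varrho=1}^a$ survives. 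One small point worth recording in either version: the exact terms produced by the induction hypothesis live in $\rd\Gamma\big(U,\Omega_\MV^{*}(\log D')\big)$, and after wedging with $\frac{\rd z_a}{z_a}$ via $\frac{\rd z_a}{z_a}\wedge\rd\eta=-\rd\big(\frac{\rd z_a}{z_a}\wedge\eta\big)$ they land in $\rd\Gamma\big(U,\Omega_\MV^{p-1}(\log D)\big)$, which is exactly what the statement requires.
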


\begin{proof}
	Let $(U;z,t)$ be a $\MV$-coordinate chart satisfying \eqref{coordnc}, the proof will be completed by induction on $0\leq a\leq m$ where $a=0$ means $\lmfp\in M\setminus D$ and in this case the conclusion follows from Corollaries \ref{local existence} and \ref{ros} (the same argument for \eqref{quasi1}). Assume $a\geq 1$ and denote $D'= \big\{\lmfp'\in U\ |\ (z_1\cdots z_{a-1})(\lmfp')=0\big\}$. By \eqref{logpform3} and (ii) in Lemma \ref{intformula}, $f$ can be written as $f=\frac{\rd z_a}{z_a}\wedge f'+f''$ where $f'\in\Gamma\big(U,\Omega_\MV^{p-1}(\log D')\big),\ f''\in \Gamma\big(U,\Omega_\MV^{p}(\log D')\big)$ and the coefficients of $f'$ are independent of the coordinate $z_a$ (replace $f'$ in the first term by evaluating its coefficients at $z_a=0$ and incorporate the resulting difference into $f''$). Now, $\rd f=0$ implies $\rd f'=0$ and thereby $\rd f''=0$. By the inductive hypothesis, we have
	\begin{align*}
		f'\equiv f_{_{\text{const}}}'&\mod\  \rd\Gamma\big(U,\Omega_\MV^{p-2}(\log D')\big),\\
		f''\equiv f_{_{\text{const}}}''&\mod\  \rd\Gamma\big(U,\Omega_\MV^{p-1}(\log D')\big),
	\end{align*}
	for some $f_{_{\text{const}}}'\in\bigwedge^{p-1}\mathrm{span}_\mathbb{C}\left\{\frac{\rd z_\varrho}{z_\varrho}\right\}_{\varrho=1}^{a-1}$ and $f_{_{\text{const}}}''\in\bigwedge^{p}\mathrm{span}_\mathbb{C}\left\{\frac{\rd z_\varrho}{z_\varrho}\right\}_{\varrho=1}^{a-1}$.
\end{proof}

Integrating \eqref{quasi1}, \eqref{quasi2}, \eqref{qusi1}, \eqref{quai2} and Lemma \ref{quai3}, we have proved that $\Omega_\MV^*(\log D)$ is quasi-isomorphic to $i_*\mathcal{A}^*_{M\setminus D}$ if $D$ is normal crossing, which, in conjunction with \eqref{logpform6}, implies the following corollary.

\begin{cor}[=Corollary \ref{sc} $(ii)$]
	Assume that $M$ admits a smooth $1$-convex exhaustion function with respect to an elliptic structure $\MV$, and that $D\subseteq M$ is a normal crossing basic hypersurface. Then for $p\geq 0,$
	\begin{align*}
		H^p(M\setminus D,\mathbb{C}) =\frac{\mathrm{Ker}\left(\Gamma\big(M,\Omega_\MV^p(\log D)\big)\overset{\rd}{\longrightarrow}\Gamma\big(M,\Omega_\MV^{p+1}(\log D)\big)\right)}{\mathrm{Im}\left(\Gamma\big(M,\Omega_\MV^{p-1}(\log D)\big)\overset{\rd}{\longrightarrow}\Gamma\big(M,\Omega_\MV^{p}(\log D)\big)\right)},
	\end{align*}
	where $\Omega_\MV^{-1}(\log D):=0.$ In particular, $H^p(M\setminus D,\mathbb{C}) =0$ for $p\geq m+1$.
\end{cor}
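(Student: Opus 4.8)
The plan is to assemble the corollary from the quasi-isomorphism $\Omega_\MV^*(\log D)\longrightarrow i_*\mathcal{A}^*_{M\setminus D}$ just established and the hypercohomology identity \eqref{logpform6}. The first step is to pass to hypercohomology on $M$: a quasi-isomorphism of bounded-below complexes of sheaves is an isomorphism in the derived category, hence induces an isomorphism of the derived global sections,
\begin{align*}
	\mathbb{H}^p\big(M,\Omega_\MV^*(\log D)\big)\cong\mathbb{H}^p\big(M,i_*\mathcal{A}^*_{M\setminus D}\big),\qquad p\geq 0.
\end{align*}
Thus it remains to evaluate each side explicitly and match them with the two cohomology groups appearing in the statement.

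For the right-hand side I would identify the hypercohomology with the singular cohomology of $M\setminus D$. Each sheaf $i_*\mathcal{A}^q_{M\setminus D}$ is a sheaf of modules over the fine sheaf $\mathcal{A}^0_M$ of smooth functions on $M$ (multiplication by a globally smooth function preserves smoothness away from $D$), hence is itself fine and therefore $\Gamma(M,\cdot)$-acyclic. Consequently the hypercohomology is computed directly by the complex of global sections, and since $\Gamma\big(M,i_*\mathcal{A}^q_{M\setminus D}\big)=\Gamma\big(M\setminus D,\mathcal{A}^q_{M\setminus D}\big)$, the de Rham theorem on $M\setminus D$ gives
\begin{align*}
	\mathbb{H}^p\big(M,i_*\mathcal{A}^*_{M\setminus D}\big)=H^p\Big(\Gamma\big(M\setminus D,\mathcal{A}^*_{M\setminus D}\big),\rd\Big)=H^p(M\setminus D,\mathbb{C}).
\end{align*}
For the left-hand side I would simply invoke \eqref{logpform6}, which already expresses $\mathbb{H}^p(M,\Omega_\MV^*(\log D))$ as the cohomology of the complex of global logarithmic forms, the positive-degree sheaf cohomologies $H^q(M,\Omega_\MV^p(\log D))$, $q\geq1$, having been annihilated by Corollary \ref{local existence1}\,(i). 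Chaining the three displays yields the asserted formula.

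For the final vanishing statement I would note that $\Lambda^pN^*\MV=0$ as soon as $p>\mathrm{rank}_{\mathbb{C}}N^*\MV=m$; since the local frame \eqref{logpform3} exhibits $\Omega_\MV^p(\log D)$ as built from $p$-fold wedges of conormal covectors, this bundle vanishes for $p>m$, so the numerator of the quotient is zero and $H^p(M\setminus D,\mathbb{C})=0$ for $p\geq m+1$. The genuine content of the corollary—namely the quasi-isomorphism itself (Lemma \ref{quai3} together with the stalk computations \eqref{quasi1}--\eqref{quai2}) and the degeneration behind \eqref{logpform6}—is already in place, so the only point demanding care is the homological bookkeeping in the first two paragraphs: one must ensure that the fine resolution $i_*\mathcal{A}^*_{M\setminus D}$ and the log complex (acyclic in positive sheaf degree) are being compared as two representatives of the same object $\mathrm{R}\Gamma(M,-)$, so that the derived-category isomorphism is compatible with both ways of computing derived global sections. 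I expect this to be the only delicate step; everything else is formal.
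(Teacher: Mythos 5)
Your proposal is correct and follows essentially the same route as the paper: the paper likewise deduces the result by combining the quasi-isomorphism $\Omega_\MV^*(\log D)\to i_*\mathcal{A}^*_{M\setminus D}$ with the hypercohomology identity \eqref{logpform6}, and the vanishing for $p\geq m+1$ from $\Lambda^pN^*\MV=0$. You merely make explicit the homological bookkeeping (fineness of $i_*\mathcal{A}^q_{M\setminus D}$ and the identification of both hypercohomologies) that the paper leaves implicit, which is a harmless and accurate elaboration.
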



To conculde this section, we consider an extension problem for the canonical bundle of the elliptic structure $\MV$. We begin with the following property for basic functions with respect to $\MV$.
\begin{prop}\label{pobf}
	Let $V\subseteq M$ be an open set, suppose $f\in C^\infty(V)$ is a basic function with respect to $\MV$ and $\rd_\mfp f\neq0$ at some point $\lmfp\in U$. Then there exists a coordinate chart $(U;z,t)$ centered at $\lmfp$ within $V$, defined by $(\ref{ncfthm})$ and satisfying $(\ref{ncfthm1})$, such that $f-f(\lmfp)$ coincides with one of coordinate functions $z_\varrho$ $(1\leq\varrho\leq m$$)$.
\end{prop}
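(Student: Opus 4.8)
The plan is to reduce everything to an application of the holomorphic inverse function theorem in the complex directions of an adapted chart. First I would fix an arbitrary $\MV$-coordinate chart $(U_0;w,s)$ centered at $\lmfp$, as furnished by the complex Frobenius theorem via $(\ref{ncfthm})$ and $(\ref{ncfthm1})$, so that $\MV|_{U_0}$ is spanned by $\{\partial/\partial\bar w_\varrho,\partial/\partial s_\tau\}$. Since $f$ is basic it is annihilated by $\MV$; the equations $\partial f/\partial\bar w_\varrho=0$ and $\partial f/\partial s_\tau=0$ say precisely that $f=f(w)$ is holomorphic in $w=(w_1,\dots,w_m)$ and independent of the real coordinates $s$. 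Consequently $\rd f=\sum_{\varrho=1}^m(\partial f/\partial w_\varrho)\,\rd w_\varrho$, and the hypothesis $\rd_\mfp f\neq0$ forces $\partial f/\partial w_{\varrho_0}(\lmfp)\neq0$ for some $\varrho_0$; after relabelling I may assume $\varrho_0=1$.

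Next I would define the candidate new coordinates by
\[
z_1:=f-f(\lmfp),\qquad z_\varrho:=w_\varrho\ (2\le\varrho\le m),\qquad t_\tau:=s_\tau,
\]
so that $z=(z_1,\dots,z_m)$ is a holomorphic map of $w$ whose holomorphic Jacobian at $\lmfp$ is triangular with diagonal entry $\partial f/\partial w_1(\lmfp)\neq0$, hence invertible. The holomorphic inverse function theorem then yields a biholomorphism $w\mapsto z$ on a neighborhood of $\lmfp$, and $(z,t)$ — written as $z_\varrho=x_\varrho+\sqrt{-1}x_{m+\varrho}$, $t_\tau=x_{2m+\tau}$ — is a genuine smooth coordinate system centered at $\lmfp$ (indeed $z(\lmfp)=0$, $t(\lmfp)=0$, and the real and imaginary parts of $z$ together with $t$ have independent differentials at $\lmfp$ because the complex Jacobian is invertible and $t=s$), which after shrinking and rescaling may be normalized so that $U\cong\Delta^m\times(-1,1)^{n-m}$ and $U\subseteq V$. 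By construction $f-f(\lmfp)=z_1$.

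It remains to check that $(z,t)$ is again an $\MV$-coordinate chart, i.e.\ that $\MV$ is spanned by $\{\partial/\partial\bar z_\varrho,\partial/\partial t_\tau\}$ in the new coordinates; this is the only point requiring verification, and it is where the holomorphicity of the change of variables is essential. Because $z=z(w)$ is holomorphic and $t=s$, the chain rule gives $\partial/\partial\bar w_\varrho=\sum_\mu\overline{(\partial z_\mu/\partial w_\varrho)}\,\partial/\partial\bar z_\mu$ (the terms $\partial z_\mu/\partial\bar w_\varrho$ and $\partial t_\tau/\partial\bar w_\varrho$ both vanish) and $\partial/\partial s_\tau=\partial/\partial t_\tau$. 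Since the matrix $(\partial z_\mu/\partial w_\varrho)$ is invertible, the span of $\{\partial/\partial\bar w_\varrho,\partial/\partial s_\tau\}$ coincides with the span of $\{\partial/\partial\bar z_\mu,\partial/\partial t_\tau\}$, so $\MV$ is adapted to $(z,t)$ in the form $(\ref{ncfthm1})$. I do not anticipate a substantive obstacle here: the sole care needed is confirming that this elementary holomorphic change of variables preserves the adapted structure, which is exactly the content of the characterization of $\MV$-coordinate charts recorded just before Lemma \ref{intformula}.
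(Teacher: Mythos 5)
Your argument is correct, and it takes a genuinely different (and in fact more direct) route than the paper. The paper also starts from an adapted chart in which $f$ is holomorphic in $z$ and independent of $t$, but it then applies the \emph{implicit} function theorem to the level set $\{f=f(\lmfp)\}$: it produces a holomorphic function $\hbar_1(z')$ with $f(\hbar_1(z'),z')\equiv f(\lmfp)$, sets $\hat z_1=z_1-\hbar_1(z')$, and concludes that $f-f(\lmfp)=\hat z_1\hat f_1$ with $\hat f_1$ a nowhere-vanishing basic function --- i.e.\ literally it only obtains the coordinate function up to a basic unit, and one further holomorphic change of variable (absorbing $\hat f_1$ into $\hat z_1$) is still needed to make $f-f(\lmfp)$ \emph{equal} to a coordinate. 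Your use of the holomorphic \emph{inverse} function theorem, taking $z_1:=f-f(\lmfp)$, $z_\varrho:=w_\varrho$ ($\varrho\ge2$), $t:=s$ with invertible triangular Jacobian, achieves this in one step and yields the stated conclusion exactly; your chain-rule verification that a $w$-holomorphic, $s$-preserving change of variables carries the frame $\{\partial/\partial\bar w_\varrho,\partial/\partial s_\tau\}$ onto the span of $\{\partial/\partial\bar z_\mu,\partial/\partial t_\tau\}$ is precisely the point that needs checking, and it is correct. One cosmetic caveat: if you really rescale the chart so that $U\cong\Delta^m\times(-1,1)^{n-m}$, the identity $f-f(\lmfp)=z_1$ degrades to $f-f(\lmfp)=cz_1$; since the proposition only asks for a chart of the form $(\ref{ncfthm})$ satisfying $(\ref{ncfthm1})$ (not the normalized polydisc model), simply omit the rescaling.
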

\begin{proof}
	According to the complex Frobenius theorem, there exists a coordinate chart centered at $\lmfp$ in $V$, given by (\ref{ncfthm})
	\begin{align*}
		(U; z_1,\cdots,z_{m},t_1,\cdots,t_{n-m}),
	\end{align*}
	and satisfying (\ref{ncfthm1}). Since $f$ is basic, it is annihilated by $\MV$, i.e., $f$ is holomorphic in $z$ and independent of $t$. We compute $\rd_\mfp f=\sum_{\varrho=1}^{m}\partial_{z_\varrho}f(\lmfp)\rd z_\varrho\neq0$, then without loss of generality, we may assume $\partial_{z_{1}}f(\lmfp)\neq0$.
	
	If $\partial_{z_{1}}f(\lmfp)\neq0$, in view of the implicit function theorem, shrinking $U$ if necessary, there exists a smooth function $\hbar_1=\hbar_1(z',\bar{z}')$ where $z'=(z_2,\cdots,z_d)$ such that
	\begin{align*}
		f(\hbar_1(z',\bar{z}'),z')-f(\lmfp)\equiv0\ \text{and}\ \partial_{z_{1}}f\neq0\ \text{on}\ U.
	\end{align*}
	Applying $\partial_{\bar{z}_{\varrho}}$ ($2\leq\varrho\leq m$) to the above identity we have
	\begin{align*}
		0\equiv\partial_{\bar{z}_{\varrho}}f(\hbar_1(z',\bar{z}'),z')=\partial_{\bar{z}_{\varrho}}\hbar_1(z',\bar{z}')\partial_{\bar{z}_{1}}f(\hbar_1,z'),
	\end{align*}
	which leads to $\hbar_1(z',\bar{z}')=\hbar_1(z')$. Define $\hat{z}_1=z_1-\hbar_1(z')$, it follows that
	\begin{align*}
		(U;\hat{z}_1,z',t)
	\end{align*}
	is a coordinate chart satisfying (\ref{ncfthm1}). Set $\hat{f}(\hat{z}_1,z'):=f(\hat{z}_1+\hbar_1,z')=f(z)$, we know that $\hat{f}(\hat{z}_1,z')-f(\lmfp)=0$ if $\hat{z}_1=0$. Thus,
	\begin{align*}
		f-f(\lmfp)=\hat{f}(\hat{z}_1,z')-f(\lmfp)=\hat{z}_1\hat{f}_1(\hat{z}_1,z'),
	\end{align*}
	where $\hat{f}_1(\hat{z}_1,z')\neq0$ on $U$. This implies that $f-f(\lmfp)$ and $\hat{z}_1$ are equivalent up to multiplication by a basic invertible function.
\end{proof}

In what follows, let $D$ be a basic hypersurface of $M$ locally defined by
\begin{align}\label{sh.}
	D\cap U_\alpha=F_\alpha^{-1}(0)\ \text{and}\ \rd F_\alpha\neq0\ \text{on}\ D\cap U_\alpha.
\end{align}
where $\{U_\alpha\}_\alpha$ is an open covering of $M$ and $F_\alpha\in  C^\infty(U_\alpha)$ are basic functions. 
We call such $D$ a \emph{smooth basic hypersurface $($with respect to $\MV$$)$}. Moreover, for all $\alpha,\beta$
\begin{align*}
	\phi_{\alpha\beta}:=\frac{F_\alpha}{F_\beta}\in C^\infty(U_\alpha\cap  U_\beta)\ \text{are nowhere zero basic functions.}
\end{align*}
It's obvious that
\begin{align*}
	\phi_{\alpha\beta}\cdot\phi_{\beta\gamma}\cdot\phi_{\gamma\alpha}=1\ \text{on}\ U_\alpha\cap U_\beta\cap U_\gamma,
\end{align*}
the $1$-cocycle $\{\phi_{\alpha\beta}\}_{\alpha,\beta}$ thus defines a basic line bundle $[D]$.

Each smooth basic hypersurface $D$ is compatible with $\mathcal{V}$ in a natural way. More precisely, we have the following proposition.
\begin{prop}
	Any smooth basic hypersurface $($with respect to the elliptic structure $\MV)$
	\begin{align}\label{i.}
		\imath:D\hookrightarrow M
	\end{align}
	induces an elliptic structure $\MV_D:=\MV\cap \mathbb{C}TD$ over $D$ of corank $(m-1)$.
\end{prop}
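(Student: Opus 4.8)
The plan is to reduce everything to the local normal form supplied by Proposition \ref{pobf} and then read off the claim from the standard model (\ref{ncfthm1}) of an elliptic structure. Fix $\lmfp\in D$ and a chart $U_\alpha$ with basic defining function $F_\alpha$ as in (\ref{sh.}). Since $F_\alpha$ is basic and $\rd F_\alpha\neq0$ on $D$, Proposition \ref{pobf} furnishes an $\MV$-coordinate chart $(U;z,t)$ centered at $\lmfp$ in which $F_\alpha-F_\alpha(\lmfp)$ agrees with $z_1$ up to a nowhere-vanishing basic factor; as $\lmfp\in D$ forces $F_\alpha(\lmfp)=0$, we get $D\cap U=\{z_1=0\}$. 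In these coordinates $D$ carries the induced coordinates $(z_2,\dots,z_m,t)$, so $\dim_\mathbb{R}D=(m+n)-2$.

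The key step is to identify $\MV_D=\MV\cap\mathbb{C}TD$ intrinsically and to check it has constant rank $n-1$. Along $D$ a vector $v\in\MV_\lmfp$ lies in $\mathbb{C}T_\lmfp D$ iff $\rd F_\alpha(v)=\rd\bar{F}_\alpha(v)=0$; but $F_\alpha$ basic means $\rd F_\alpha\in N^*\MV$, so the first condition is automatic on $\MV$ and hence $\MV_D=\ker\big(\rd\bar{F}_\alpha|_\MV\big)$. I claim $\rd\bar{F}_\alpha|_\MV$ is a nowhere-vanishing section of $\MV^*$ over $D\cap U_\alpha$: if it vanished at a point then $\rd\bar{F}_\alpha\in N^*\MV$, while $\bar{F}_\alpha$ being annihilated by $\overline{\MV}$ gives $\rd\bar{F}_\alpha\in N^*\overline{\MV}$; since $\MV$ is elliptic, $N^*\MV\cap N^*\overline{\MV}=N^*(\MV+\overline{\MV})=0$, forcing $\rd\bar{F}_\alpha=\overline{\rd F_\alpha}=0$, contrary to $\rd F_\alpha\neq0$. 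Therefore $\MV_D$ is a corank-one subbundle of $\MV|_D$, i.e.\ a rank $(n-1)$ bundle; the relation $\rd\bar{F}_\alpha|_\MV=\overline{\phi_{\alpha\beta}}\,\rd\bar{F}_\beta|_\MV$ on $D\cap U_\alpha\cap U_\beta$ (using $\bar{F}_\beta=0$ on $D$) shows the kernels match on overlaps, so $\MV_D$ is globally well defined. Concretely, in the chart above $\MV_D$ is spanned by $\{\partial/\partial\bar{z}_\varrho\}_{\varrho=2}^m$ and $\{\partial/\partial t_\tau\}_{\tau=1}^{n-m}$, all of which are tangent to $\{z_1=0\}$.

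It remains to verify formal integrability and ellipticity, which are immediate from this frame. Restricted to $D$, the fields $\partial/\partial\bar{z}_\varrho$ $(\varrho\geq2)$ and $\partial/\partial t_\tau$ are exactly the coordinate vector fields for $(z_2,\dots,z_m,t)$ on $D$; they commute, so $[\MV_D,\MV_D]\subseteq\MV_D$ and $\MV_D$ is formally (indeed locally) integrable, with (\ref{ncfthm1}) for $D$ taking the standard form of an elliptic structure of rank $n-1$. Adding the conjugate frame $\{\partial/\partial z_\varrho\}_{\varrho=2}^m\cup\{\partial/\partial t_\tau\}$ spans all of $\mathbb{C}TD$, so $\MV_D+\overline{\MV_D}=\mathbb{C}TD$ and $\MV_D$ is elliptic; its corank is $(m+n-2)-(n-1)=m-1$, as asserted. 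The only real subtlety is the constant-rank statement, and the argument above pinpoints it as a direct consequence of the ellipticity of $\MV$ through the transversality $N^*\MV\cap N^*\overline{\MV}=0$; everything else is bookkeeping in the normal form.
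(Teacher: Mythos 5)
Your proof is correct and follows essentially the same route as the paper: both reduce to the normal form supplied by Proposition \ref{pobf} and read off the frame $\{\partial/\partial\bar z_\varrho\}_{\varrho\geq 2}\cup\{\partial/\partial t_\tau\}$ for $\MV_D$. The only difference is cosmetic: where the paper first shows $\rd u_\alpha\wedge\rd v_\alpha\neq0$ by a case analysis on the real and imaginary parts of $F_\alpha$, you establish the same transversality by observing that $\rd\bar F_\alpha|_{\MV}$ cannot vanish because $N^*\MV\cap N^*\overline{\MV}=0$ for an elliptic structure — a slightly cleaner packaging of the identical ellipticity input.
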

\begin{proof}
	We first prove that $D$ is a submanifold of $M$ of real codimension $2$ by contradiction. Let $\{(U_\alpha,F_\alpha)\}_\alpha$ be a basic defining functions set of $D$ satisfying (\ref{sh.}). We may write
	\begin{align*}
		{\rm d}F_\alpha={\rm d}u_\alpha+\sqrt{-1}{\rm d}v_\alpha,
	\end{align*}
	where $u_\alpha,v_\alpha$ are real-valued functions. For each $\alpha$, if there is a point $\lmfp\in D\cap U_\alpha$ such that
	$${\rm d}_\mfp u_\alpha\wedge{\rm d}_\mfp v_\alpha=0,$$
	then either ${\rm d}_\mfp u_\alpha=0$, ${\rm d}_\mfp v_\alpha=0$ or $u_\alpha=c_\alpha v_\alpha$ on an open neighborhood $V_\alpha\subseteq U_\alpha$ of $\lmfp$ where $c_\alpha$ is a real-valued function on $V_\alpha$. Since $XF_\alpha=0$ for any $X\in\Gamma(U_\alpha,\mathcal{V})$, the first two cases imply that ${\rm d}_\mfp v_\alpha\in N_\mfp^*\mathcal{V}$ and ${\rm d}_\mfp u_\alpha\in N_\mfp^*\mathcal{V}$, which contradicts the fact that $\mathcal{V}$ is an elliptic structure. The last case means that
	\begin{align*}
		{\rm d}F_\alpha={\rm d}u_\alpha+\sqrt{-1}{\rm d}v_\alpha=v_\alpha{\rm d}c_\alpha+c_\alpha{\rm d}v_\alpha+\sqrt{-1}{\rm d}v_\alpha=(c_\alpha+\sqrt{-1}){\rm d}v_\alpha\ \text{on}\ D\cap V_\alpha,
	\end{align*}
	which is again in contradiction to the ellipticity of $\mathcal{V}$.
	
	We deduce from Proposition \ref{pobf} that for any point $\lmfp\in D\cap U_\alpha$, there exists a coordinate chart
	\begin{align*}
		(z_1,\cdots,z_{m},t_1,\cdots,t_{n-m})
	\end{align*}
	 in $U_\alpha$ centered at $\lmfp$ satisfying $(\ref{ncfthm1})$ such that, without loss of generality, $F_\alpha$ is the coordinate function $z_1$. Then $\imath^*N^*\MV|_{_{D\cap U_\alpha}}$ is spanned by
	\begin{align*}
		\{\rd z_2,\cdots,\rd z_d\}.
	\end{align*}
	Hence, $\imath^*N^*\MV$ induces an elliptic structure $\MV_D:=\MV\cap \mathbb{C}TD$ over $D$ of corank $m-1$.
\end{proof}
\begin{remark}
	If $M$ has a $1$-convex exhaustion function $\varphi$ with respect to $\MV$, then the restriction $\varphi|_{_D}$ is a $1$-convex exhaustion function with respect to $\MV_D$ on $D$.
\end{remark}

Thanks to Proposition \ref{pobf} and (\ref{logpform3}), the sheaf $\Omega_\MV^p(\log D)$ is generated by
\begin{align*}
	\big(\Omega_\MV^p(\log D)\big)(U):=\left\{\frac{\rd F_{{_U}}}{F_{_U}}\wedge f_{_U}+g_{_U}\ |\ f_{_U}\in\Gamma(U,\Omega_\MV^{p-1}),\ g_{_U}\in\Gamma(U,\Omega_\MV^{p})\right\},
\end{align*}
where $U\subseteq M$ is any open subset and $F_{_U}\in C^\infty(U)$ is the local basic defining function of $D$ such that $\rd F_{_U}\neq0$ on $D\cap U$.

To formulate our extension result, we define a sheaf morphism for any $p\geq0$ as follows
\begin{align}\label{sfm}
	R_p:\Omega_\MV^p(\log D)&\rightarrow \imath_*\Omega_{\MV_D}^{p-1}\\
	\frac{\rd F_{_U}}{F_{_U}}\wedge f_{_U}+g_{_U}&\mapsto f_{_U}|_{_D},\nonumber
\end{align}
where $\Omega_{\MV_D}^{p-1}:=\mathcal{O}_{\MV_D}(\Lambda^{p-1}N^*\MV_D)$ for $p\geq1$ and $\Omega_{\MV_D}^{-1}:=0$, $\imath$ is given by (\ref{i.}). Indeed, suppose $D\cap U=\{F'_{_U}=0\}$ such that $\rd F'_{_U}\neq0$, then there is a non-vanishing basic function $h_{_U}$ on $U$ satisfying $F'_{_U}=h_{_U}F_{_U}$. If
\begin{align*}
	\frac{\rd F_{_U}}{F_{_U}}\wedge f_{_U}+g_{_U}=\frac{\rd F'_{_U}}{F'_{_U}}\wedge f_{_U}'+g_{_U}'=\left(\frac{\rd h_{_U}}{h_{_U}}+\frac{\rd F_{_U}}{F_{_U}}\right)\wedge f_{_U}'+g_{_U}',
\end{align*}
then
\begin{align*}
	\frac{\rd F_{_U}}{F_{_U}}\wedge(f_{_U}-f_{_U}')=0
\end{align*}
which implies that
\begin{align*}
	f_{_U}=f_{_U}',
\end{align*}
since $f_{_U}-f_{_U}'\equiv0\ {\rm mod}\ C^\infty(U,N_{D}^*)$ where $N_{D}^*$ is the conormal bundle of $D$.

\begin{prop}[=Corollary \ref{sc} $(iii)$]
	Let $\MV$ be an elliptic structure over a manifold $M$, and let $D$ be a smooth basic hypersurface with the induced elliptic structure $\MV_D$. If $M$ admits a smooth $1$-convex exhaustion function with respect to $\MV$, then for any $0\leq p\leq m$, 
	\begin{align*}
		R_p:\Gamma(M,\Omega_\MV^p(\log D))\rightarrow \Gamma(D,\Omega_{\MV_D}^{p-1})\ \text{is surjective},
	\end{align*}
	In particular, for $p=m$, there exists a homomorphism ${R}$ induced by $(\ref{sfm})$ such that
	\begin{align*}
		{R}:\Gamma(M,\Omega_\MV^m\otimes[D])\rightarrow \Gamma(D,\Omega_{\MV_D}^{m-1})\ \text{is surjective}.
	\end{align*}
\end{prop}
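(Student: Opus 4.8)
The plan is to realize $R_p$ as the connecting map of a short exact sequence of sheaves on $M$ and then extract surjectivity on global sections from a vanishing $H^1$. First I would establish exactness of
\[
0\longrightarrow\Omega_\MV^p\longrightarrow\Omega_\MV^p(\log D)\stackrel{R_p}{\longrightarrow}\imath_*\Omega_{\MV_D}^{p-1}\longrightarrow0 .
\]
The inclusion $\Omega_\MV^p\hookrightarrow\Omega_\MV^p(\log D)$ is the evident one (take $f_{_U}=0$ in the local description preceding $(\ref{sfm})$), and well-definedness of $R_p$ is already recorded in the excerpt. For surjectivity at the stalk of a point $\lmfp\in D$, I would invoke Proposition \ref{pobf} to choose a $\MV$-coordinate chart $(U;z,t)$ in which the minimal defining function is $F_{_U}=z_1$, so that $\imath^*N^*\MV|_{D\cap U}$ is framed by $\{\rd z_2,\dots,\rd z_m\}$; given a germ $\sum_{1\notin P}h_{_P}(z_2,\dots,z_m)\rd z_{_P}$ of $\Omega_{\MV_D}^{p-1}$, I extend its coefficients to be independent of $z_1$ and form $\tfrac{\rd z_1}{z_1}\wedge\sum_{1\notin P}h_{_P}\rd z_{_P}$, a logarithmic $p$-form whose residue is precisely the prescribed germ.

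Identifying the kernel is where the real content lies. Suppose $\tfrac{\rd F_{_U}}{F_{_U}}\wedge f_{_U}+g_{_U}$ lies in $\mathrm{Ker}\,R_p$, i.e. $f_{_U}|_{_D}=0$. After discarding any $\rd z_1$-component of $f_{_U}$ (which is annihilated by $\tfrac{\rd z_1}{z_1}\wedge$ and whose restriction to $D$ vanishes since $\imath^*\rd z_1=0$), I may assume $f_{_U}=\sum_{1\notin P}f_{_{U,P}}(z)\rd z_{_P}$; then $f_{_U}|_{_D}=0$ forces each $f_{_{U,P}}$ to vanish on $\{z_1=0\}$. Lemma \ref{intformula}$(ii)$ applied coefficientwise (with $\varrho=1$) gives $f_{_{U,P}}=z_1\,\tilde f_{_{U,P}}$ with $\tilde f_{_{U,P}}$ basic, whence $\tfrac{\rd z_1}{z_1}\wedge f_{_U}=\rd z_1\wedge\tilde f_{_U}\in\Gamma(U,\Omega_\MV^p)$ has no pole and the whole form lies in $\Omega_\MV^p$. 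This yields $\mathrm{Ker}\,R_p=\Omega_\MV^p$ and completes the exactness.

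With the sequence in hand I would pass to the long exact cohomology sequence. Since $\imath$ is a closed embedding, $H^0(M,\imath_*\Omega_{\MV_D}^{p-1})=\Gamma(D,\Omega_{\MV_D}^{p-1})$, so the cokernel of $R_p$ on global sections injects into $H^1(M,\Omega_\MV^p)$. Now $\Lambda^pN^*\MV$ is a basic vector bundle by Example \ref{exa}$(iii)$ (elliptic structures are locally integrable), and ellipticity gives $\Omega_\MV^p=\mathcal{O}_\MV(\Lambda^pN^*\MV)=\mathcal{S}_\MV(\Lambda^pN^*\MV)$; applying Corollary \ref{local existence1}$(i)$ with $q=1$ to the given smooth $1$-convex exhaustion function yields $H^1(M,\Omega_\MV^p)=0$. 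Hence $R_p$ is surjective on global sections for every $0\le p\le m$. Finally, for $p=m$ the local frame $(\ref{logpform3})$ (with $a=1$ since $D$ is smooth) gives $\Omega_\MV^m(\log D)=F_{_U}^{-1}\Omega_\MV^m$ over each $U$, i.e. $\Omega_\MV^m(\log D)\cong\Omega_\MV^m\otimes[D]$ through the cocycle $\phi_{\alpha\beta}=F_\alpha/F_\beta$ defining $[D]$; transporting $R_m$ across this isomorphism produces the asserted surjection $R:\Gamma(M,\Omega_\MV^m\otimes[D])\to\Gamma(D,\Omega_{\MV_D}^{m-1})$. I expect the main obstacle to be the bookkeeping in the kernel computation—justifying the absorption of the $\rd z_1$-component and the coefficientwise application of the divisibility lemma—rather than the cohomological vanishing, which is supplied directly by Corollary \ref{local existence1}.
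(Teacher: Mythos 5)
Your proposal is correct and follows essentially the same route as the paper: the short exact sequence $0\to\Omega_\MV^p\to\Omega_\MV^p(\log D)\to\imath_*\Omega_{\MV_D}^{p-1}\to0$, the long exact cohomology sequence combined with $H^1(M,\Omega_\MV^p)=0$ from Corollary \ref{local existence1}, and for $p=m$ the identification $\Omega_\MV^m(\log D)\cong\Omega_\MV^m\otimes[D]$ via multiplication by the local defining functions. The only difference is that you spell out the stalk-level surjectivity and kernel computation (via Proposition \ref{pobf} and Lemma \ref{intformula}(ii)), which the paper records as obvious.
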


\begin{proof}
	It's obvious that $R_p$ (see (\ref{sfm})) is surjective and ${\rm Ker}(R_p)=\Omega_\MV^p$. Consequently, we obtain an exact sequence on $M$
	\begin{align*}
		0\longrightarrow\Omega_\MV^p\longrightarrow\Omega_\MV^p(\log D)\stackrel{R_p}{\longrightarrow}\imath_*\Omega_{\MV_D}^{p-1}\longrightarrow0,
	\end{align*}
	which induces the long exact sequence
	\begin{align*}
		\cdots\longrightarrow \Gamma(M,\Omega_\MV^p(\log D))\stackrel{R_{p}}{\longrightarrow} \Gamma(D,\Omega_{\MV_D}^{p-1})\longrightarrow H^{1}(M,\Omega_\MV^p)\longrightarrow\cdots.
	\end{align*}
	In view of Corollary \ref{local existence1}, we have $H^{1}(M,\Omega_\MV^p)=0$ for any $0\leq p\leq m$ which implies that $R_p$ is surjective.
	
	For $p=m$, let $s_{_D}:=\{F_{\alpha}\}_{\alpha}$ be the local basic defining functions set of $D$ satisfying (\ref{sh.}), we may define a morphism by locally multiplying with $F_{\alpha}$
	\begin{align*}
		{s}_{_D}\otimes:\Gamma(M,\Omega_\MV^m(\log D))&\rightarrow\Gamma(M,\Omega_\MV^m\otimes[D])\\
		\frac{\rd F_{\alpha}}{F_{\alpha}}\wedge f_{\alpha}+g_{\alpha}&\mapsto(\rd F_{\alpha}\wedge f_{\alpha}+F_{\alpha}g_{\alpha})\otimes\sigma_\alpha,
	\end{align*}
	where $\sigma_\alpha$ is a local basic frame of $[D]$. In fact, for
	\begin{align*}
		\left\{\frac{\rd F_{\alpha}}{F_{\alpha}}\wedge f_{\alpha}+g_{\alpha}\right\}_{\alpha}\in\Gamma(M,\Omega_\MV^m(\log D)),
	\end{align*}
	we have
	\begin{align*}
		\rd F_{\alpha}\wedge f_{\alpha}+F_{\alpha}g_{\alpha}=\frac{ F_{\alpha}}{F_{\beta}}(\rd F_{\beta}\wedge f_{\beta}+F_{\beta}g_{\beta})\ \text{on}\ U_\alpha\cap U_\beta,
	\end{align*}
	which implies the well-definedness of ${s}_{_D}\otimes$. Since the zero set of $F_{\alpha}$ is nowhere dense, it follows immediately that the homomorphism ${s}_{_D}\otimes$ is injective. On the other hand, any element in $\Gamma(M,\Omega_\MV^m\otimes[D])$ divided by $F_{\alpha}$ yields an element in $\Gamma(M,\Omega_\MV^m(\log D))$, thus ${s}_{_D}\otimes$ is surjective. We therefore conclude that ${s}_{_D}\otimes$ is an isomorphism. As a result,
	\begin{align*}
		{R}:=R_m\circ({s}_{_D}\otimes)^{-1}
	\end{align*}
	is the desired homomorphism.
\end{proof}

Many discussions on complex structures can be explored within the framework of elliptic structures. Beyond the aforementioned cases, we developed elliptic analogues of fundamental results related to complex structures in \cite{JYY22} and \cite{JY23}, including division and extension theorems for holomorphic functions, as well as Nadel's coherence theorem.

A natural question arises regarding the extension of our results in this section to hypocomplex structures, which is a generalization of elliptic structures (see \cite{T2}). A key challenge is that hypocomplex structures are not necessarily Levi flat, whereas the definition of the quadratic form (\ref{qf.}) relies on Levi flatness. We will address this issue in a future paper.

\subsection*{Acknowledgement}
The second author thanks Dr. Yuanpu Xiong for helpful discussions.

\end{document}